\newcommand{\N}{{\mathbb N}}
\newcommand{\Z}{{\mathbb Z}}
\newcommand{\Q}{{\mathbb Q}}
\newcommand{\C}{{\mathbb C}}
\newcommand{\R}{{\mathbb R}}
\newtheorem{thm}{Theorem}
\newtheorem{lem}{Lemma}
\newtheorem{cor}{Corollary}
\newtheorem{prop}{Proposition}
\newtheorem{rmk}{Remark}
\newtheorem{ex}{Example}
\newcommand{\thmref}[1]{Theorem~\ref{#1}}
\newcommand{\propref}[1]{Proposition~\ref{#1}}
\newcommand{\lemref}[1]{Lemma~\ref{#1}}
\newcommand{\corref}[1]{Corollary~\ref{#1}}
\newcommand{\rmkref}[1]{Remark~\ref{#1}}
\newcommand{\exref}[1]{Example~\ref{#1}}
\begin{document}


\title[Laurent type expansion of multiple polylogarithms at integer points]
{Laurent type expansion of multiple polylogarithms at integer points}

\author{Pawan Singh Mehta \orcidlink{0009-0001-6717-9063} and Biswajyoti Saha \orcidlink{0009-0009-2904-4860}}

\address{Pawan Singh Mehta and Biswajyoti Saha\\ \newline
Department of Mathematics, Indian Institute of Technology Delhi, 
Hauz Khas, New Delhi 110016, India.}
\email{maz218521@maths.iitd.ac.in, biswajyoti@maths.iitd.ac.in}

\subjclass[2020]{11M32, 32Dxx}

\keywords{multiple polylogarithm function, regularised multiple polylogarithm function, Laurent type expansion}

\begin{abstract}
In this article, we study the local behaviour of the multiple polylogarithm functions at integer points,
in the $s$-aspect. This is done by writing a Laurent type expansion at integer points, involving certain
power series and rational functions. The coefficients of these power series are the regularised values
of the multiple polylogarithm functions at certain related integer points.
\end{abstract}

\maketitle

\section{Introduction}\label{sec-intro}

Throughout this article, we denote the set of all non-negative integers by $\N$.
For positive integers $r$, $a_1, \ldots, a_r$ and complex numbers $z_1,\ldots, z_r$ with $|z_i|<1$ for all $1\leq i\leq r$,
the multiple polylogarithm (of depth $r$) is defined by the convergent power series
 $$
 \mathrm{Li}(a_1,\ldots,a_r;z_1,\ldots,z_r):=\sum_{n_1>\cdots>n_r>0}\frac{z_1^{n_1}\cdots z_r^{n_r}}{n_1^{a_1}\cdots n_r^{a_r}}.
 $$
This definition can be extended for $|z_i|\leq 1$ for all $1\leq i\leq r$, if $a_1\geq2$.
Further, this series can be considered as a multiple Dirichlet series by fixing complex numbers
$z_1,\ldots, z_r$ and replacing $a_1,\ldots,a_r$ by complex variables $s_1,\ldots,s_r$ such that $(s_1, \ldots, s_r) \in U_r$, i.e.,
\begin{equation}\label{mp}
\mathrm{Li}_{\bf z}(s_1,\ldots,s_r)=\mathrm{Li}_{(z_1,\ldots,z_r)}(s_1,\ldots,s_r):=\sum_{n_1>\cdots>n_r>0}\frac{z_1^{n_1}\cdots z_r^{n_r}}{n_1^{s_1}\cdots n_r^{s_r}},
\end{equation}
where
$$
U_r := \{ (s_1, \ldots, s_r) \in \C^r ~:~ \Re(s_1 + \cdots + s_i) > i
~\text{ for all }~ 1 \le i  \le r \}.
$$
The above series converges normally on compact subsets of $U_r$ and 
therefore defines a holomorphic function there (see \cite[Proposition 2]{BS1}).
The depth $0$ multiple polylogarithm function is taken to be the constant function $1$.
In our recent work \cite{PSBS},
we have shown that the series in \eqref{mp} converges (conditionally) on a larger open domain, depending on ${\bf z}=(z_1,\ldots,z_r)$,
by treating it as
\begin{equation}\label{lim-mp}
\lim_{N \to \infty}\sum_{N>n_1>\cdots>n_r>0}\frac{z_1^{n_1}\cdots z_r^{n_r}}{n_1^{s_1}\cdots n_r^{s_r}}.
\end{equation}
For an integer $r\geq 1$ and ${\bf z}=(z_1,\ldots,z_r) \in \C^r$, where $|z_i|\leq1$ for all  $1\leq i\leq r$, let
$q({\bf z})$ be the smallest positive integer such that $1\leq q({\bf z})\leq r$ and  $z_{q({\bf z})}\neq1$.
If no such $q({\bf z})$ exists, then we set $q({\bf z}):=r+1$. Consider the open set $U_r(\mathbf{z})$ of $\mathbb{C}^r$ defined as follows:
\begin{equation}\label{Urz}
\begin{split}
U_r(\mathbf{z}):=\{ (s_1,\ldots,s_r)\in\mathbb{C}^r: & \ \Re(s_1+\cdots+s_i)>i \text{ if } 1\leq i < q({\bf z}) \\
&\text{ and } \Re(s_1+\cdots+s_i)>i-1 \text{ if } q({\bf z})\leq i \leq r \}.
\end{split}
\end{equation}
We proved the following in \cite[Theorem 2]{PSBS}.

\begin{thm}\label{big-dom}
The multiple Dirichlet series in \eqref{mp} converges on $U_r(\mathbf{z})$.
Moreover, the series in \eqref{mp} defines a holomorphic function on $U_r(\mathbf{z})$.
\end{thm}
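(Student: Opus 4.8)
The plan is to prove, by induction on the depth $r$, that the partial sums
\[
P_N(\mathbf{s}):=\sum_{N>n_1>\cdots>n_r>0}\frac{z_1^{n_1}\cdots z_r^{n_r}}{n_1^{s_1}\cdots n_r^{s_r}}
\]
converge locally uniformly on $U_r(\mathbf{z})$; holomorphy on $U_r(\mathbf{z})$ is then immediate from the Weierstrass convergence theorem, as each $P_N$ is entire. To make the induction close, one strengthens the statement: for \emph{every} $\mathbf{s}\in\C^r$ the truncated sum $\sum_{N\ge n_1>\cdots>n_r>0}(\cdots)$ should admit, as $N\to\infty$, an asymptotic expansion of the form $\sum_j c_j(\mathbf{s})\,N^{a_j(\mathbf{s})}(\log N)^{b_j}\,w_j^{\,N}$ with finitely many terms, $|w_j|\le 1$ and the $a_j$ affine-linear in $\mathbf{s}$, up to a remainder that is $O_{\mathbf{s},\varepsilon}(N^{-\delta+\varepsilon})$ for a $\delta$ that may be taken arbitrarily large; on $U_r(\mathbf{z})$ this expansion turns out to have bounded leading behaviour, which is what gives the convergence. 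Note that $U_r(\mathbf{z})$ depends on $\mathbf{z}$ only through $q(\mathbf{z})$: it is a ``safe'' domain, conservative enough to cover every tuple with that value of $q$.

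The base case $r=1$ is classical. If $z_1=1$ (so $q(\mathbf{z})=2$), Euler--Maclaurin summation gives $\sum_{0<n<N}n^{-s_1}=\zeta(s_1)+\tfrac{N^{1-s_1}}{1-s_1}+\cdots$, so $P_N$ converges exactly when $\Re(s_1)>1$, in agreement with \eqref{Urz}. If $|z_1|=1$ and $z_1\ne 1$ (so $q(\mathbf{z})=1$), the partial sums $\sum_{n\le N}z_1^{\,n}$ are bounded by $2|1-z_1|^{-1}$, so Abel summation against $n^{-s_1}$ gives convergence for $\Re(s_1)>0$; crucially, no polynomial term $N^{1-s_1}$ occurs in the expansion, and this is exactly the mechanism behind the replacement of ``$>i$'' by ``$>i-1$''. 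If $|z_1|<1$ the series converges absolutely everywhere.

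For the inductive step I would peel off the outermost index,
\[
P_N(\mathbf{s})=\sum_{n_1=1}^{N-1}\frac{z_1^{n_1}}{n_1^{s_1}}\,\Phi(n_1),\qquad
\Phi(n_1):=\sum_{n_1>n_2>\cdots>n_r>0}\frac{z_2^{n_2}\cdots z_r^{n_r}}{n_2^{s_2}\cdots n_r^{s_r}},
\]
and substitute into $\Phi(n_1)$ the asymptotic expansion of the truncated depth-$(r-1)$ polylogarithm provided by the inductive hypothesis. This writes $P_N$ as a finite combination of generalized partial sums $\sum_{n_1<N}(z_1w_j)^{n_1}\,n_1^{-(s_1-a_j)}(\log n_1)^{b_j}$, plus a remainder that is summable against $n_1^{-s_1}$ on a half-plane pushed arbitrarily far to the left. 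Each such piece yields exactly one convergence condition: it converges everywhere if $|z_1w_j|<1$; it converges for $\Re(s_1-a_j)>0$, by Abel summation, if $|z_1w_j|=1$ but $z_1w_j\ne 1$; and it converges for $\Re(s_1-a_j)>1$, being then of $\zeta$-type, if $z_1w_j=1$; the logarithmic factors do not affect these open conditions. Recombining the exponents $a_j$ that occur (these are governed by the inductive description of the expansion for the data $(z_2,\ldots,z_r)$, whose Euler--Maclaurin ``main terms'' sit only at the indices $i$ with $z_i=1$) shows: for $i<q(\mathbf{z})$, where $z_1=\cdots=z_i=1$, the governing piece has $w_j=1$, so $z_1w_j=1$, so it is of $\zeta$-type and produces the condition $\Re(s_1+\cdots+s_i)>i$; from $i=q(\mathbf{z})$ onward an oscillatory factor is present, the partial products of the $z_i$ that arise involve $z_{q(\mathbf{z})}\ne 1$ and degenerate to $1$ only in the extreme case, and the condition is accordingly no stronger than $\Re(s_1+\cdots+s_i)>i-1$. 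Every condition produced is therefore implied by $\mathbf{s}\in U_r(\mathbf{z})$, which is precisely the form of \eqref{Urz}. Because all the estimates used are locally uniform in $\mathbf{s}$, so is the convergence, which completes the induction.

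The principal obstacle is the bookkeeping in that last paragraph. One must first pin down, and then propagate through every depth-reduction step, the exact form of the asymptotic expansion of a truncated multiple polylogarithm: which exponents $a_j$, unimodular factors $w_j$ and logarithmic powers $b_j$ genuinely appear (logarithms surface on the hyperplanes $\Re(s_1+\cdots+s_i)=i$ and whenever two exponents collide), and which partial products of the $z_i$ can degenerate to $1$ --- these being the configurations that force the conservative domain $U_r(\mathbf{z})$. One must then check that no emergent condition is stronger than those defining $U_r(\mathbf{z})$. The reduction to partial sums, the Weierstrass step for holomorphy, and the individual one-variable estimates are, by contrast, routine.
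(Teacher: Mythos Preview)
The paper does not prove this theorem here: it is quoted verbatim from the authors' earlier preprint \cite[Theorem~2]{PSBS}, so there is no in-paper argument against which to compare your sketch directly.

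That said, your strategy is sound and is in fact very close in spirit to the machinery the present paper builds in \S\ref{sec-inter}--\S\ref{sec-asymp} for other purposes. Your inductive engine --- peel off $n_1$, replace the inner depth-$(r-1)$ partial sum by its asymptotic expansion in $n_1$, and then analyse each resulting one-variable sum $\sum_{n_1<N}(z_1w_j)^{n_1}n_1^{-(s_1-a_j)}(\log n_1)^{b_j}$ according to whether $z_1w_j=1$ or not --- is exactly the mechanism behind \propref{trans} and \lemref{uni-convg}, and the dichotomy ``Euler--Maclaurin main term when the product is $1$, Abel-summation saving of one unit otherwise'' is precisely what later drives \propref{*-mp-asym-r=1} and \propref{asymp-mp*}. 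So the approach is not merely plausible; it is the same circle of ideas the paper itself relies on.

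Your honest disclaimer about the bookkeeping is well placed: the one point that is genuinely asserted rather than argued in your sketch is the claim that for $i\ge q(\mathbf z)$ no emergent condition is stronger than $\Re(s_1+\cdots+s_i)>i-1$. Concretely, one must verify that among all the unimodular weights $w$ that can reach level $i$ through the recursion, the product $z_1\cdots z_i\cdot(\text{accumulated }w)$ equals $1$ for at most $i-1$ of the $i$ steps once $z_{q(\mathbf z)}\ne 1$ has entered the picture. This is true (each Euler--Maclaurin ``$+1$'' in the exponent is tied to a step where the running product is $1$, and the step at index $q(\mathbf z)$ breaks at least one such coincidence), but it deserves an explicit inductive statement rather than the phrase ``degenerate to $1$ only in the extreme case''. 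With that lemma formulated and proved, your sketch becomes a complete proof; without it, a reader cannot see why the conditions produced never overshoot those defining $U_r(\mathbf z)$.
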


This theorem therefore extends the classical result for the depth $1$ Dirichlet series $\mathrm{Li}_{(z)}(s)$.
However, this open domain of convergence is not optimal in general.
For example, as $N \to \infty$, one has
\begin{equation}\label{ex-0}
\sum_{N>n_1>n_2>0}\frac{(-1)^{n_2}}{n_1^{2}n_2^{-1}}=\frac{\log 2}{2}-\frac{\pi^2}{16}+o(1).
\end{equation}
Clearly, $(2,-1) \notin U_2(1,-1)$. In fact, one can see that for $s \in \C$ with $\Re(s)>1$, the limit
$$
\lim_{N \to \infty} \sum_{N>n_1>n_2>0}\frac{(-1)^{n_2}}{n_1^{s}n_2^{-1}}
$$
exists. In \cite[Theorem 3]{PSBS}, we established a more accurate and larger open domain
where the series in \eqref{mp} (viewed as the limit in \eqref{lim-mp}) converges at integer points.
To recall the statement, we need a few more notations. Let $r\geq 1$ be an integer and $\mathbf{z}=(z_1,\ldots,z_r) \in \C^r$
be such that $z_1,\ldots,z_r$ are roots of unity. For each $1 \le i \le r$, define
\begin{equation}\label{I-Q}
J_i(\mathbf{z}):=\{j : 1 \le j \le i \text{ and } z_{[j,i]}=1\}, \ \text{ and } \ Q_i(\mathbf{z}):=|J_i(\mathbf{z})|, 
\end{equation}
the number of elements in $J_i(\mathbf{z})$, where $z_{[j,i]}:=\prod_{k=j}^{i}z_k$ for $1 \le j \le i \le r$. Consider the open set
\begin{equation}\label{Vrz}
V_r(\mathbf{z}):=\{(s_1,\ldots,s_r)\in \C^r : \Re(s_1+\cdots+s_i)>Q_i(\mathbf{z}) \text{ for all } 1\leq i\leq r\}.
\end{equation}
It is easy to see that $V_r(\mathbf{z})=U_r$ if and only if $\mathbf{z}=(1,\ldots,1)$. Moreover,
$U_r(\mathbf{z}) \subseteq V_r(\mathbf{z})$.
We proved the following in \cite[Theorem 3]{PSBS}.

\begin{thm}\label{ad-dom}
Let $r \ge 1$ be an integer and $z_1,\ldots,z_r \in \C$ be roots of unity. The multiple Dirichlet series  in \eqref{mp}
converges at integer points of $V_r(\mathbf{z})$. Moreover, for an integer point $(s_1,\ldots,s_r) \in V_r(\mathbf{z})$
and $k_1,\ldots, k_r\in \mathbb{N}$, the series
\begin{equation}\label{mp-der}
\sum_{n_1>\cdots>n_r>0}\frac{z_1^{n_1}(\log n_1)^{k_1}\cdots z_r^{n_r}(\log n_r)^{k_r}}{n_1^{s_1}\cdots n_r^{s_r}}
\end{equation}
converges when we consider it as the limit
\begin{equation}\label{lim-mp-der}
\lim_{N \to \infty}\sum_{N>n_1>\cdots>n_r>0}\frac{z_1^{n_1}(\log n_1)^{k_1}\cdots z_r^{n_r}(\log n_r)^{k_r}}{n_1^{s_1}\cdots n_r^{s_r}}.
\end{equation}
\end{thm}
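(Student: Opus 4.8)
The plan is to prove, by induction on the depth $r$, the statement about \eqref{mp-der} for all $k_1,\ldots,k_r\in\N$ at once; the convergence of \eqref{mp} at integer points of $V_r(\mathbf z)$ is the special case $k_1=\cdots=k_r=0$. When $r=1$ the hypothesis reads $s_1\in\Z$ with $s_1>Q_1(\mathbf z)$, i.e.\ $s_1\ge 2$ if $z_1=1$ and $s_1\ge 1$ if $z_1\ne 1$; in the first case $\sum_n(\log n)^{k_1}n^{-s_1}$ converges absolutely, and in the second the partial sums of $z_1^n$ are bounded (since $z_1$ is a root of unity $\ne 1$) while $(\log n)^{k_1}n^{-s_1}$ decreases monotonically to $0$, so \eqref{lim-mp-der} converges by Abel summation.

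For the inductive step, fix $r\ge 2$ and an integer point $(s_1,\ldots,s_r)\in V_r(\mathbf z)$. Since the cutoff $n_1<N$ in \eqref{lim-mp-der} restricts only the largest index, for each fixed $(n_1,\ldots,n_{r-1})$ the innermost sum over $n_r$ is the \emph{complete} partial sum
\[
f(M):=\sum_{n=1}^{M-1}\frac{z_r^{n}(\log n)^{k_r}}{n^{s_r}},\qquad M:=n_{r-1},
\]
which is independent of $N$. The technical core is an asymptotic expansion of $f(M)$: by iterated Abel summation when $z_r\ne 1$, and by the Euler--Maclaurin formula when $z_r=1$, one obtains, for every $B>0$,
\[
f(M)=\rho+\sum_{j\in F}c_j\,z_r^{M}M^{\alpha_j}(\log M)^{\beta_j}+O\!\big(M^{-B}(\log M)^{\beta}\big),
\]
with $F$ finite, $\beta_j\in\N$, the exponents $\alpha_j$ integers in $\{-s_r,-s_r-1,\ldots\}$ if $z_r\ne 1$ and in $\{1-s_r,-s_r,-s_r-1,\ldots\}$ if $z_r=1$, and $\rho$ a finite constant, namely the regularised value of the depth one series at $s_r$ (in the single delicate case $z_r=1,\ s_r=1$ the divergent $(\log M)^{k_r+1}/(k_r+1)$ term is excluded from $\rho$ and appears among the $c_j$-terms, with $\alpha_j=0$).

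Substituting this expansion into \eqref{lim-mp-der} and distributing over $\sum_{N>n_1>\cdots>n_{r-1}>0}$ gives three kinds of contribution: (i) $\rho$ times the depth-$(r-1)$ series \eqref{mp-der} with coefficients $(z_1,\ldots,z_{r-1})$, exponents $(s_1,\ldots,s_{r-1})$, log powers $(k_1,\ldots,k_{r-1})$; (ii) for each $j\in F$, $c_j$ times the depth-$(r-1)$ series with coefficients $(z_1,\ldots,z_{r-2},z_{r-1}z_r)$, exponents $(s_1,\ldots,s_{r-2},s_{r-1}-\alpha_j)$, log powers $(k_1,\ldots,k_{r-2},k_{r-1}+\beta_j)$; and (iii) the remainder produced by the $O(M^{-B})$ term. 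The data in (i) and (ii) are again roots of unity, integers, and elements of $\N$, so the inductive hypothesis applies once the new parameter tuple is shown to lie in the appropriate $V_{r-1}(\cdot)$. Since $Q_i$ depends only on the first $i$ coefficients by \eqref{I-Q}, in (i) every required inequality is inherited directly from $\Re(s_1+\cdots+s_i)>Q_i(\mathbf z)$ for $1\le i\le r-1$. In (ii), writing $\mathbf z'$ for the new coefficient tuple and using $z'_{[j,r-1]}=z_{[j,r]}$, a short computation with \eqref{I-Q} gives $Q_{r-1}(\mathbf z')=Q_r(\mathbf z)$ when $z_r\ne 1$ (because then $r\notin J_r(\mathbf z)$) and $Q_{r-1}(\mathbf z')=Q_{r-1}(\mathbf z)=Q_r(\mathbf z)-1$ when $z_r=1$; since $\alpha_j\le 1-s_r$ in the latter case, in both cases the inequality $\Re(s_1+\cdots+s_{r-2}+(s_{r-1}-\alpha_j))>Q_{r-1}(\mathbf z')$ needed for the inductive hypothesis follows from $\Re(s_1+\cdots+s_r)>Q_r(\mathbf z)$, and the inequalities at indices $<r-1$ are inherited as in (i).

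Finally, for the remainder (iii), absorbing $O(M^{-B})$ into the $(r-1)$-st factor produces an innermost sequence decaying like $n_{r-1}^{-\Re(s_{r-1})-B}$ up to logarithmic factors; writing the remainder as $\sum_{n_{r-1}}w(n_{r-1})\,I_N(n_{r-1})$, where $I_N(m)$ is the depth-$(r-2)$ truncated sum $\sum_{N>n_1>\cdots>n_{r-2}>m}(\cdots)$, one splits $I_N(m)$ into its unrestricted part, which converges as $N\to\infty$ by the inductive hypothesis at depth $r-2$ (whose conditions again follow from those on $\mathbf z$), and a part with $n_{r-2}\le m$, which recurses; this shows $I_N(m)$ tends to a limit of at most polynomial growth in $m$, and then choosing $B$ large makes $\sum_m w(m)\,I_\infty(m)$ absolutely convergent, which closes the induction. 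I expect this last step to be the main obstacle: because $\mathrm{Li}_{\mathbf z}$ is only conditionally convergent at the new integer points of $V_r(\mathbf z)$, the $O(M^{-B})$ remainder cannot be handled by absolute convergence of the full $r$-fold sum, and one is forced to carry the expansion to sufficiently high order and re-run the induction to control it; the combinatorial bookkeeping of the $Q_i$, by contrast, is routine.
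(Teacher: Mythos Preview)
This theorem is not proved in the present paper: it is quoted verbatim from \cite[Theorem~3]{PSBS}, and the only information given here about its proof is that it ``was proved by devising a regularisation process for the series defining the multiple polylogarithm functions'' (see the paragraph immediately following the statement). So there is no proof in this paper to compare your proposal against.

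That said, your strategy is precisely the shape of that regularisation process: replace the innermost partial sum $f(M)=\sum_{n<M}z_r^{n}n^{-s_r}(\log n)^{k_r}$ by its asymptotic expansion in $M$ and feed the resulting terms back into a depth-$(r-1)$ induction. Your verification that the new parameter tuples in (i) and (ii) land in the appropriate $V_{r-1}(\cdot)$, via the identities $Q_{r-1}(\mathbf z')=Q_r(\mathbf z)$ when $z_r\ne1$ and $Q_{r-1}(\mathbf z')=Q_r(\mathbf z)-1$ when $z_r=1$, is correct and is the combinatorial core of the argument.

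The genuine gap is exactly where you flag it, step (iii). Enlarging $B$ improves the exponent only at index $r-1$, so it cannot by itself force absolute convergence of the full $(r-1)$-fold outer sum; thus your ``choose $B$ large'' does not close the induction without an additional input. Your proposed fix via $I_N(m)$ is plausible but incomplete: to make $\sum_m w(m)I_\infty(m)$ absolutely convergent you need a polynomial bound on $I_\infty(m)$ in $m$, and that is a \emph{quantitative} strengthening of the inductive hypothesis (not just ``the limit exists'' but ``the partial sum equals a polynomial in $N,\log N$ with coefficients in $\mathcal C_{\mathbf z}$, plus $o(1)$''), which you have not stated or proved. The regularisation process in \cite{PSBS} is set up with exactly this stronger statement as the inductive hypothesis, which is why the remainder causes no trouble there: the structured (polynomial-plus-$o(1)$) form propagates through the induction, and the $o(1)$ piece at each stage is itself a sum whose summand has been shifted far enough to be handled at the next step. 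If you rewrite your induction to carry this stronger conclusion, your argument should go through and will essentially coincide with the cited proof.
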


This theorem was proved by devising a regularisation process for the series defining the multiple
polylogarithm functions (see \cite[\S 2]{PSBS}). However, the question of holomorphicity of
the multiple polylogarithm functions at integer points of $V_r(\mathbf{z})$ is a delicate one and
was not addressed in \cite{PSBS}.

In this article, which is a follow-up of \cite{PSBS}, we use the regularisation process devised in
our article \cite{PSBS} to address this
problem by writing explicit Laurent type expansion for the multiple polylogarithm functions around
all the integer points in $\C^r$, for any positive integer $r$. A prototype of the desired Laurent type expansion is
$$
\zeta(s)=\frac{1}{s-1}+\sum_{k \ge 0} \frac{(-1)^k \gamma_k}{k!} (s-1)^k,
$$
where for $k \ge 0$, $\gamma_k$ is the Stieltjes constant defined by
$$
\gamma_k:= \lim_{N \to \infty} \left( \sum_{N>n>0} \frac{\log^{k}n}{n} - \frac{\log^{k+1}N}{k+1} \right).
$$
In \cite{BS2}, the second author provided explicit Laurent type expansion for the multiple zeta functions around
integer points. Our work here therefore extends the work of \cite{BS2} to the case of the multiple polylogarithm
functions. This requires us to study the regularised multiple polylogarithm functions, which we define as follows:

For a positive integer $r$, consider ${\bf a}=(a_1,\ldots,a_r) \in \Z^r$ and ${\bf z}=(z_1, \ldots, z_r) \in \C^r$, such that
$z_1, \ldots, z_r$ are roots of unity. Then we consider the formal power series
\begin{align}\label{Reg-Li}
\sum_{k_1,\ldots,k_r\geq 0}\frac{(-1)^{k_1+\cdots+k_r}}{k_1!\cdots k_r!}\ell_{[k_1,\ldots,k_r]}^{(a_1,\ldots,a_r)}{(\mathbf{z})}(s_1-a_1)^{k_1}\cdots(s_r-a_r)^{k_r},
\end{align}
where the constant $\ell_{[k_1,\ldots,k_r]}^{(a_1,\ldots,a_r)}(\mathbf{z})$ is the regularised value of the sequence of partial sums
\begin{align}\label{Reg-series}
\sum_{N>n_1>\cdots>n_r>0} \frac{z_1^{n_1}(\log n_1)^{k_1}\cdots z_r^{n_r}(\log n_r)^{k_r}}{n_1^{a_1}\cdots n_r^{a_r}},
\end{align}
as defined in \cite[Definitions 2, 3]{PSBS}. For the sake of completeness, we recall from \cite{PSBS} that as $N \to \infty$,
the series \eqref{Reg-series} can be expressed as a sum of four different types of terms, namely, one which goes to $\infty$ in
modulus, one which is bounded but oscillates, one which remains constant and one which goes to $0$. The constant term is then
defined to be the regularised value $\ell_{[k_1,\ldots,k_r]}^{(a_1,\ldots,a_r)}(\mathbf{z})$ of the series \eqref{Reg-series}.
For example,
$$
\sum_{N>n>0}(-1)^n=-\frac{(-1)^N}{2} - \frac{1}{2}, \ \sum_{N>n>0}(-1)^n n=-\frac{(-1)^N N}{2} +\frac{(-1)^N}{4} - \frac{1}{4},
$$
and further, as $N \to \infty$, we also have
$$
\sum_{N>n_1>n_2>0}\frac{(-1)^{n_2}n_2}{n_1}= - \frac{\log N}{4} +  \frac{(-1)^N}{4} + \frac{1-\log 2-\gamma}{4} + o(1),
$$
where $\gamma$ is the Euler's constant. Thus, $\ell_{[0]}^{(0)}(-1)=-1/2$, $\ell_{[0]}^{(-1)}(-1)=-1/4$ and
$\ell_{[0,0]}^{(1,-1)}(1,-1)=(1-\log 2-\gamma)/4$. For more details on the above regularisation process we suggest the reader to see \cite[\S2]{PSBS}.

Now for the formal power series \eqref{Reg-Li}, we prove that it
converges in a neighbourhood of  ${\bf a}$ and it can be extended to a meromorphic function on $\C^r$.
We denote this meromorphic function by
$\mathrm{Li}^{\text{Reg}}_{(\mathbf{z};\mathbf{a})}(s_1,\ldots,s_r)$ and call it
the (depth $r$) regularised multiple polylogarithm function for $\mathbf{z}$ and $\mathbf{a}$.
The depth $0$ regularised multiple polylogarithm function is taken to be the constant function $1$. 
If ${\bf a}\in \Z^r\cap U_r(\mathbf{z})$, then as meromorphic functions on $\C^r$ we have
$$
\mathrm{Li}_{\mathbf{z}}(s_1,\ldots,s_r)=\mathrm{Li}^{\text{Reg}}_{(\mathbf{z};\mathbf{a})}(s_1,\ldots,s_r).
$$
But for other integer points of $\C^r$, this is not necessarily true.
For example, if $r=2$ and $\mathbf{z}=(1,-1)$, then around the point $\mathbf{a}=(1,1)$, we have the
following equality between meromorphic functions:
\begin{align}\label{exm-mp-1}
\mathrm{Li}_{\mathbf{z}}(s_1,s_2)=\frac{\mathrm{Li}^{\text{Reg}}_{(-1;1)}{(s_2)}}{s_1-1}+\mathrm{Li}^{\text{Reg}}_{(\mathbf{z};\mathbf{a})}(s_1,s_2).
\end{align}
Hence $s_1=1$ is the only polar hyperplane of $\mathrm{Li}_{\mathbf{z}}(s_1,s_2)$
which passes through the point $(1,1)$.

The formulation of our theorem is more delicate for general integer points, particularly for those lying outside
$\overline{U_r(\mathbf{z})}$. Hence we first consider the case of integer points in $\overline{U_r(\mathbf{z})}$.
This has been stated in \thmref{main-thm-section-2} in \S \ref{behavior-at-boundary} and the proof is given in \S
\ref{proof-boundary}. The intermediate steps involved in the proof of \thmref{main-thm-section-2} are discussed in \S
\ref{sec-inter} and the required proofs are given in \S  \ref{proof-lemmas}.
In \S  \ref{sec-asymp}, we discuss the required setup of asymptotic expansions of sequences of germs of holomorphic and
meromorphic functions with variable coefficients, which we use to prove our theorem for the general integer points in $\C^r$
(see \thmref{thm-gen-point} below). This theorem and its proof appear in \S  \ref{sec-gen}. \thmref{thm-gen-point} also allows us
to discuss the question of holomorphicity of the multiple polylogarithm function at integer points of $V_r(\mathbf{z})$,
which we have mentioned above (see \corref{cor-vrz} below).


\section{Behaviour of the multiple polylogarithms at the integer points of $\overline{U_r(\mathbf{z})}$}\label{behavior-at-boundary}
Let $r\ge 1$ be an integer.
In this section, for $\mathbf{z}=(z_1,\ldots,z_r)\in \C^r$, where $z_i$'s are fixed roots of unity for all $1\leq i\leq r$,
we discuss the local behaviour of the multiple polylogarithms at the integer points of $\overline{U_r(\mathbf{z})}$.
Let $\mathbf{a}=(a_1,\ldots,a_r)\in \Z^r\cap \overline{U_r(\mathbf{z})}$. Then from the definition of $U_r(\mathbf{z})$
(see \eqref{Urz}), we can deduce that the only possible polar hyperplanes of the multiple polylogarithm function
$\mathrm{Li}_{\bf z}(s_1,\ldots,s_r)$ passing through a point $\mathbf{a}$ of $\overline{U_r(\mathbf{z})}$
are of the form $s_1+\cdots+s_i=i$ or $s_1+\cdots+s_i=i-1$ for some $1\leq i\leq r$, depending on the complex 
tuple $\mathbf{z}$ and the point $\mathbf{a}$. To understand this in detail, we recall the following theorem
(see \cite[Lemma 1.2]{FKMT} and \cite[Theorem 9]{BS1}).

\begin{thm}\label{poles-malf}
The function $\mathrm{Li}_{\bf z}(s_1,\ldots,s_r)$
extends to a meromorphic function on the whole of $\C^r$.
If $z_{[1,i]} \neq 1$ for all $1 \le i \le r$, then
$\mathrm{Li}_{\bf z}(s_1,\ldots,s_r)$ is holomorphic on $\C^r$.
Otherwise, if $i_1 < \cdots < i_m$ denote all the indices such that $z_{[1,i_j]}= 1$
for all $1 \le j \le m $, then the set of all possible polar hyperplanes of
$\mathrm{Li}_{\bf z}(s_1,\ldots,s_r)$ can be given as follows:

$(a)$ If $i_1=1$, then $\mathrm{Li}_{\bf z}(s_1,\ldots,s_r)$ is
holomorphic outside the union of the hyperplanes given by the equations
$$
s_1=1; ~s_1 + \cdots+ s_{i_j}=n \ \text{ for all integer } n \le j \ \text{and }
2 \le j \le m.
$$

$(b)$ If $i_1 \not =1$, then $\mathrm{Li}_{\bf z}(s_1,\ldots,s_r)$ is
holomorphic outside the union of the hyperplanes given by the equations
$$
s_1 + \cdots+ s_{i_j}=n \text{ for all integer } n \le j \text{ and }
1 \le j \le m.
$$
\end{thm}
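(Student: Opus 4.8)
The plan is to prove the meromorphic continuation, and locate the polar hyperplanes, by induction on the depth $r$. For $r=1$ the function $\mathrm{Li}_{(z_1)}(s_1)=\sum_{n\ge1}z_1^{n}n^{-s_1}$ is, up to elementary factors, a Lerch zeta function: if $z_1=1$ it equals $\zeta(s_1)$, meromorphic on $\C$ with a single simple pole at $s_1=1$, and if $z_1$ is a root of unity with $z_1\neq1$ then summation by parts shows that $\sum_{n>m}z_1^{n}n^{-s_1}$ has, for each fixed $s_1$, an asymptotic expansion in descending powers of $m$ with coefficients entire in $s_1$ and with no term of the form $m^{1-s_1}$; consequently $\mathrm{Li}_{(z_1)}$ extends to an entire function. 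This is exactly the statement in depth $1$, with $z_{[1,1]}=z_1$ deciding between the two cases.

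For the inductive step I would peel off the outermost variable $n_1$. Writing $n_1=n_2+k$ with $k\ge1$ and $z_1^{n_1}=z_1^{n_2}z_1^{k}$ gives, formally,
\begin{equation*}
\mathrm{Li}_{\bf z}(s_1,\ldots,s_r)=\sum_{n_2>\cdots>n_r>0}\frac{z_1^{n_2}z_2^{n_2}z_3^{n_3}\cdots z_r^{n_r}}{n_2^{s_2}n_3^{s_3}\cdots n_r^{s_r}}\sum_{k\ge1}\frac{z_1^{k}}{(n_2+k)^{s_1}}.
\end{equation*}
The inner sum over $k$ I would treat either by the Mellin--Barnes integral for $(n_2+k)^{-s_1}$ or, equivalently, by the Euler--Maclaurin expansion of $\sum_{k\ge1}z_1^{k}(n_2+k)^{-s_1}$ in descending powers of $n_2$. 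Feeding this expansion into the $n_2$-sum and recognising the resulting sums as multiple polylogarithms of depth $r-1$, one expresses $\mathrm{Li}_{\bf z}(s_1,\ldots,s_r)$, up to a remainder that is holomorphic on an arbitrarily large polydisc, as a locally finite sum of terms
\begin{equation*}
c_{\ell}(s_1)\,\mathrm{Li}_{(z_1z_2,\,z_3,\ldots,z_r)}(s_1+s_2+\ell,\,s_3,\ldots,s_r),
\end{equation*}
where $\ell$ ranges over $\N$ when $z_1\neq1$, with every coefficient $c_\ell(s_1)$ entire, and over $\{-1\}\cup\N$ when $z_1=1$, the extra term $\ell=-1$ carrying the factor $(s_1-1)^{-1}$ coming from the pole of $\zeta$ at $1$, all other $c_\ell(s_1)$ remaining entire. (Pushing the remainder further to the right, or iterating the whole procedure, produces convergent such expansions on all of $\C^{r}$.)

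Next I would invoke the inductive hypothesis applied to the depth-$(r-1)$ tuple $(z_1z_2,z_3,\ldots,z_r)$, whose partial products $w_{[1,i]}=z_{[1,i+1]}$ recover exactly the original products $z_{[1,i]}$ with $i\ge2$. A linear change of variables then translates the polar hyperplanes of each $\mathrm{Li}_{(z_1z_2,\ldots,z_r)}(s_1+s_2+\ell,s_3,\ldots,s_r)$ into hyperplanes of the form $s_1+\cdots+s_{i_j}=n$; since these always involve $s_1+s_2$, the hyperplane $s_1=1$ can only be contributed by the coefficient $c_{-1}(s_1)=(s_1-1)^{-1}$, hence only when $z_1=1$, i.e. when $i_1=1$. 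Collecting these contributions gives the meromorphic continuation of $\mathrm{Li}_{\bf z}$ to $\C^r$ together with the two cases $(a)$ and $(b)$ of the statement, and shows that $\mathrm{Li}_{\bf z}$ is entire when no $z_{[1,i]}$ equals $1$, since then no pole-carrying factor ever appears. The step I expect to be the main obstacle is precisely this pole bookkeeping: one must check that the residue terms and the shifted remainder introduce no polar hyperplanes outside the asserted family, that the full infinite family $s_1+\cdots+s_{i_j}=n$ with $n\le j$ is recovered as $\ell$ runs over its range, and that all the expansions and contour shifts are uniform on compact sets, so that the resulting series of meromorphic functions converges locally uniformly and defines a meromorphic function on $\C^r$. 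Carrying out this verification in full is the content of \cite[Lemma 1.2]{FKMT} and \cite[Theorem 9]{BS1}.
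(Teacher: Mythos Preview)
The paper does not give its own proof of this theorem: it is stated as a recalled result with the parenthetical reference ``(see \cite[Lemma 1.2]{FKMT} and \cite[Theorem 9]{BS1})'' and is used only as input for the analysis that follows. So there is no in-paper proof to compare against.

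Your sketch is nonetheless a faithful outline of the argument in those references, particularly \cite{BS1}: induction on depth, with the base case being the classical Lerch/Riemann dichotomy, and the inductive step carried out by peeling off the outer variable and expanding the inner sum to express $\mathrm{Li}_{\bf z}$ as a (locally finite) combination of depth-$(r-1)$ polylogarithms $\mathrm{Li}_{(z_1z_2,z_3,\ldots,z_r)}(s_1+s_2+\ell,s_3,\ldots,s_r)$ with coefficients entire in $s_1$ except for the single factor $(s_1-1)^{-1}$ arising when $z_1=1$. The observation that the partial products of the contracted tuple satisfy $w_{[1,i]}=z_{[1,i+1]}$ is exactly what makes the pole bookkeeping go through, and you have identified it correctly. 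Your closing sentence already acknowledges that the uniform-convergence and residue-tracking details are the substance of the cited papers, so the proposal is appropriate as a summary of a result the present paper merely quotes.
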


From the above theorem, we can deduce that the plane $s_1+\cdots+s_i=i$ is a
polar hyperplane of $\mathrm{Li}_{\bf z}(s_1,\ldots,s_r)$ passing through the point
$\mathbf{a} \in \Z^r\cap \overline{U_r(\mathbf{z})}$ only if $z_{j}= 1$  for all $1\leq j\leq i$ and $a_1+\cdots+a_i=i$. 
Now suppose that $z_j \ne 1$ for some $1\leq j\leq i$. Then the necessary condition for the plane
$s_1+\cdots+s_i=i-1$ to be a polar hyperplane of $\mathrm{Li}_{\bf z}(s_1,\ldots,s_r)$
passing through the point $\mathbf{a}$ is $z_{[1,i]}=1$ and $a_1+\cdots+a_i=i-1$. 
Note that this can happen only if there exists exactly one integer $i_0$ such that
$1 \le i_0 < i$ and $z_{[1,i_0]} \ne 1$ and $z_{[1,i]}=1$. Hence $z_j=1$ for all $1\le j \le i$,
except for $j=i_0,i_0+1$. This can also be equivalently written as the condition that there exists exactly
one integer $j_0$ such that $1 < j_0 \le i$ and $z_{[j_0,i]} \ne 1$ and $z_{[1,i]}=1$. In fact,
$j_0=i_0+1$.

This prompts us to define an indexing set $\mathcal{I}(\mathbf{z},\mathbf{a})$
which is the collection of all integers $0\leq i\leq r$ and satisfies the following properties:
\begin{enumerate}
\item[(I)] $z_{[1,i]}=1$, \label{cond. 1}
\item[(II)] there is at most one integer $t_i$ such that $1\leq t_i\leq i$ and $z_{[t_i,i]}\neq 1$,\label{cond. 2}
\item[(III)] $a_1+\cdots+a_i=i-a(\mathbf{z})_i$, where $a(\mathbf{z})_i:=$ $\begin{cases}0 &\text{ if } z_{j}=1 \text{ for all } 1\leq j\leq i,\\
1 &\text{ otherwise}. \end{cases}$
\end{enumerate}
By convention, we set $a(\mathbf{z})_0=0$ and hence $0\in \mathcal{I}(\mathbf{z},\mathbf{a})$. Therefore,
$\mathcal{I}(\mathbf{z},\mathbf{a})$ is non-empty for all $\mathbf{z}$ and $\mathbf{a}$.
So for example, if $r=2$ and ${\bf a}=(1,1)$, then $\mathcal{I}(\mathbf{z},\mathbf{a})=\{0,1,2\}$ for ${\bf z}=(1,1)$,
$\mathcal{I}(\mathbf{z},\mathbf{a})=\{0,1\}$ for ${\bf z}=(1,-1)$, and $\mathcal{I}(\mathbf{z},\mathbf{a})=\{0\}$ for both
${\bf z}=(-1,-1)$ and ${\bf z}=(-1,1)$.

\begin{rmk}\label{def-i_0}\rm
In condition (II) of the above definition, if for some $i\in \mathcal{I}(\mathbf{z},\mathbf{a})$, there is no $t_i$
such that $1\leq t_i\leq i$ and $z_{[t_i,i]}\neq 1$, then we take $t_i=0$. 
\end{rmk}

For $i \ge 1$ in $\mathcal{I}(\mathbf{z},\mathbf{a})$, we define the function $H_{j}^{(i)}(s_j,\ldots,s_i ; z_{[t_i,i]})$
in $s_j,\ldots,s_i$, for each $1\leq j\leq i$, as follows:
\begin{align}\label{denominator}
H_{j}^{(i)}(s_j,\ldots,s_i ; z_{[t_i,i]}):=
\begin{cases}
 s_j+\cdots+s_i-(i-j) &\text{ if } 1\leq j<t_i,\\
  1-z_{[t_i,i]} & \text{ if } j=t_i,\\
  s_j+\cdots+s_i-(i-j+1) &\text{ if } t_i< j\leq i.
 \end{cases}
\end{align}


With these notations in place, we are ready to state our main theorem of this section.

\begin{thm}\label{main-thm-section-2}
Let $r\geq 1$ be an integer and $\mathbf{z}=(z_1,\ldots,z_r)\in \C^r$ be such that $z_i$'s are roots of unity for all $1\leq i\leq r$.
Let $\mathbf{a}=(a_1,\ldots,a_r)\in \Z^r\cap \overline{U_r(\mathbf{z})}$. Then the power series 
\begin{align}\label{power-series}
\sum_{k_1,\ldots,k_r\geq 0}\frac{(-1)^{k_1+\cdots+k_r}}{k_1!\cdots k_r!}\ell_{[k_1,\ldots,k_r]}^{(a_1,\ldots,a_r)}(\mathbf{z})(s_1-a_1)^{k_1}\cdots(s_r-a_r)^{k_r}
\end{align}
converges in a neighbourhood of ${\bf a}$ and can be extended to a meromorphic function on $\C^r$. 
Moreover, if this meromorphic function is denoted by $\mathrm{Li}^{\rm{Reg}}_{(\mathbf{z};\mathbf{a})}(s_1,\ldots,s_r)$, 
then we have the following equality between the meromorphic functions:
\begin{align}\label{behaviour-boundary}
\mathrm{Li}^{\rm{Reg}}_{(\mathbf{z};\mathbf{a})}(s_1,\ldots,s_r)=
\sum_{i\in \mathcal{I}(\mathbf{z},\mathbf{a})}(-1)^{i}
\frac{\mathrm{Li}_{(z_{i+1},\ldots,z_r)}(s_{i+1},\ldots,s_r)}{\prod_{1\leq j\leq i}H_{j}^{(i)}(s_j,\ldots,s_i;z_{[t_i,i]})}.
\end{align}
\end{thm}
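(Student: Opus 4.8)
The plan is to establish the identity \eqref{behaviour-boundary} by relating the regularised partial sums \eqref{Reg-series} to those of the multiple polylogarithms of lower depth, via a recursive (Abel summation) argument on the outermost index $n_1$. Concretely, for the partial sum of \eqref{Reg-series} truncated at $N$, one isolates the sum over $n_1$ of $z_1^{n_1}(\log n_1)^{k_1} n_1^{-a_1}$ times the inner partial sum $\sum_{n_1>n_2>\cdots>n_r>0}(\cdots)$. When $a_1 \le 1$ (which is forced at many boundary points), partial summation produces a main term involving the full depth-$(r-1)$ sum $\sum_{n_2>\cdots>n_r>0}(\cdots)$, whose regularised value is a $(\log N)$-polynomial times the partial sum of $\mathrm{Li}_{(z_2,\ldots,z_r)}$, together with error terms that vanish after regularisation. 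Iterating this peeling procedure down the string $(z_1),(z_1,z_2),\ldots$ exactly produces one summand for each index $i$ at which the relevant "resonance" conditions (I)--(III) hold, i.e.\ for each $i \in \mathcal{I}(\mathbf{z},\mathbf{a})$; the factor $(-1)^i$ comes from the $i$-fold iteration of partial summation, and the denominator $\prod_{j\le i} H_j^{(i)}$ records the geometric/arithmetic series that were summed at each step, with the exceptional factor $1-z_{[t_i,i]}$ appearing at the one place $j=t_i$ where the inner character $z_{[j,i]}$ is a nontrivial root of unity rather than $1$.

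First I would fix notation for the truncated sums and set up the induction on $r$: the base case $r=0$ is the constant $1$, matching the right-hand side (the only $i$ is $i=0$ and $\mathrm{Li}_{()}=1$). For the inductive step, I would write the depth-$r$ truncated sum \eqref{Reg-series} as $\sum_{0<n_1<N}\frac{z_1^{n_1}(\log n_1)^{k_1}}{n_1^{a_1}} f_r(n_1)$ where $f_r(n_1)$ is the depth-$(r-1)$ truncated sum with cutoff $n_1$; then apply Abel summation, with $F_r(M):=\sum_{n_1<M} f_r(n_1)$ being, up to its own regularisation, a $(\log)$-polynomial in $M$ plus the regularised depth-$(r-1)$ value $\ell^{(a_2,\ldots,a_r)}_{[k_2,\ldots,k_r]}(z_2,\ldots,z_r)$ plus vanishing error. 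The crucial bookkeeping is then to expand $z_1^{n_1}(\log n_1)^{k_1} n_1^{-a_1}$ summed against this polynomial using the known asymptotics — here one invokes the regularisation machinery of \cite[\S2]{PSBS}, whose whole point is that these nested sums decompose into the four term-types (divergent, oscillating-bounded, constant, vanishing) — and to identify that the \emph{constant} part is precisely the right-hand side of \eqref{behaviour-boundary} at the level of formal power-series coefficients in the $(\log n_j)^{k_j}$ data. Summing the resulting coefficients against $\prod(s_j-a_j)^{k_j}$ with the signs and factorials in \eqref{Reg-Li} converts these combinatorial identities of regularised constants into the claimed identity of generating functions; the convergence of \eqref{power-series} near $\mathbf{a}$ follows because each $\mathrm{Li}_{(z_{i+1},\ldots,z_r)}$ in \eqref{behaviour-boundary} is holomorphic near $(a_{i+1},\ldots,a_r)$ by \thmref{poles-malf} and the finitely many denominators $H_j^{(i)}$ are (by conditions (I)--(III) and the analysis of which hyperplanes pass through $\mathbf{a}$) the complete list of vanishing linear factors, so clearing them yields a holomorphic germ; meromorphic continuation to all of $\C^r$ is then inherited from that of the $\mathrm{Li}_{(z_{i+1},\ldots,z_r)}$.

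The main obstacle I anticipate is the precise combinatorial identification of $\mathcal{I}(\mathbf{z},\mathbf{a})$ as exactly the surviving index set, together with the correct form of the denominator, including the subtle placement of the factor $1-z_{[t_i,i]}$ at $j=t_i$. This is where the hypothesis $\mathbf{a}\in\overline{U_r(\mathbf{z})}$ is used decisively: it guarantees, through the discussion preceding the theorem, that at each stage of peeling the only way to produce a non-vanishing constant contribution is the "resonant" case $a_1+\cdots+a_i = i - a(\mathbf{z})_i$, and that at most one index $t_i$ can carry the twisted (root-of-unity, not equal to $1$) character; for an $\mathbf{a}$ outside $\overline{U_r(\mathbf{z})}$ multiple such resonances could stack up and the geometry of the poles changes, which is exactly why the authors defer that case to \thmref{thm-gen-point}. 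A secondary technical point is justifying that the error terms in Abel summation are genuinely of the vanishing type \emph{uniformly} enough to be discarded at the level of regularised values — but this is precisely guaranteed by \thmref{ad-dom} and the convergence in \eqref{lim-mp-der}, so I would cite those rather than re-prove the estimates. I would also need to verify the identity \eqref{exm-mp-1}-type behaviour as a sanity check on the signs in small cases ($r=1$, and $r=2$ with $\mathbf{z}=(1,-1)$, $\mathbf{a}=(1,1)$) before committing to the general argument.
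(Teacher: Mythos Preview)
Your approach is genuinely different from the paper's, and it has a real gap.

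\textbf{What the paper does.} The paper does \emph{not} peel off the outermost index by Abel summation. Instead, it uses a combinatorial identity (\thmref{combi})
\[
\mathrm{Li}_{\bf z}(s_1,\ldots,s_r)_{<N}=\sum_{i=0}^{r}(-1)^i\,\mathrm{Li}^{\star}_{(z_i,\ldots,z_1)}(s_i,\ldots,s_1)_{\geq N}\,\mathrm{Li}_{(z_{i+1},\ldots,z_r)}(s_{i+1},\ldots,s_r),
\]
which separates the partial sum into products of a \emph{star}-polylogarithm tail (indices reversed) and a full lower-depth polylogarithm. The three technical lemmas (\lemref{boundary-1}--\ref{boundary-3}) then analyze each tail $\mathrm{Li}^{\star}_{(z_i,\ldots,z_1)}(\cdot)_{\geq N}$ as a holomorphic function on a polydisc around $\mathbf{a}$ (after clearing poles by a polynomial $P$), showing that it either tends to $0$ uniformly or differs from an explicit term $z_{[1,i]}^N N^{-(s_1+\cdots+s_i+a(\mathbf{z})_i-i)}/\prod_j H_j^{(i)}$ by something tending to $0$ uniformly. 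These tail estimates are obtained not by Abel summation but by a translation formula (\propref{trans}) and induction. Finally, \lemref{Taylor-lem} converts uniform convergence on the polydisc into convergence of the Taylor coefficients, which simultaneously proves that the power series \eqref{power-series} converges \emph{and} identifies its sum.

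\textbf{The gap in your argument.} Your proof of convergence of \eqref{power-series} is circular. You propose to establish \eqref{behaviour-boundary} coefficient-by-coefficient via regularised values, then infer convergence of the left side from holomorphicity of the right side. But the right side of \eqref{behaviour-boundary} is a sum of genuinely singular terms at $\mathbf{a}$: for each $i\ge 1$ in $\mathcal{I}(\mathbf{z},\mathbf{a})$ every factor $H_j^{(i)}$ with $j\neq t_i$ vanishes at $\mathbf{a}$, and the $i=0$ term $\mathrm{Li}_{\mathbf{z}}$ itself has poles through $\mathbf{a}$. That the sum is holomorphic at $\mathbf{a}$ is not obvious a priori; it is exactly what the theorem asserts. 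A coefficient-wise identity between a formal power series and the formal Laurent expansion of a meromorphic function does not by itself force the power series to converge. The paper avoids this trap by working throughout with sequences of holomorphic functions converging \emph{uniformly} on a fixed polydisc, so convergence of \eqref{power-series} falls out of \lemref{Taylor-lem} without ever needing to know in advance that the right side is regular.

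A secondary issue: your description of the Abel step is imprecise. With $f_r(n_1)$ the inner depth-$(r{-}1)$ sum, the quantity $F_r(M)=\sum_{n_1<M}f_r(n_1)$ is not ``up to regularisation a log-polynomial plus $\ell^{(a_2,\ldots,a_r)}_{[k_2,\ldots,k_r]}$''; it is $\sum_{n_2>\cdots>n_r>0}(M-1-n_2)\,(\cdots)$, a different object. What you presumably want is the asymptotic of $f_r(n_1)$ itself as $n_1\to\infty$ (divergent${}+{}$oscillating${}+{}$constant${}+{}$vanishing), and then to sum that asymptotic against $z_1^{n_1}(\log n_1)^{k_1}n_1^{-a_1}$. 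Making this rigorous requires uniform control of the $o(1)$ remainder in $n_1$ sufficient to survive the outer summation, which is not supplied by \thmref{ad-dom} (that theorem gives pointwise convergence at integer $(s_1,\ldots,s_r)$, not the uniform-in-$n_1$ error bounds you need here).
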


For an integer point $\mathbf{a}=(a_1,\ldots,a_r)\in \Z^r\cap \overline{U_r(\mathbf{z})}$, in order to get a Laurent type
expansion of $\mathrm{Li}_{\bf z}(s_1,\ldots,s_r)$ around $\mathbf{a}$ from \eqref{behaviour-boundary}, we need an
{\it inversion process}. For this we first note that for every $i \in \mathcal{I}(\mathbf{z},\mathbf{a})$, the point
$(a_{i+1},\ldots,a_r)\in \Z^{r-i}\cap \overline{U_{r-i}(z_{i+1},\ldots,z_r)}$. This is obvious for $i=0$. For $i \ge 1$ and 
$i \in \mathcal{I}(\mathbf{z},\mathbf{a})$, we have $a_1+\cdots+a_i=i-a({\bf z})_i$.
Let $k$ be an integer such that $i\leq k\leq r$. Note that by definition, we have
$$
a_1+\cdots+a_k=i-a({\bf z})_i+a_{i+1}+\cdots+a_k\geq k-a({\bf z})_k.
$$
Therefore, $a_{i+1}+\cdots+a_k\geq (k-i)-\left(a({\bf z})_k-a({\bf z})_i\right)$. Note that $0\leq a({\bf z})_k-a({\bf z})_i\leq 1$,
as $a({\bf z})_i =1$ implies  $a({\bf z})_k =1$. Now we only need to study the case $a({\bf z})_k-a({\bf z})_i=1$, i.e.,
$a({\bf z})_i=0$ and $a({\bf z})_k=1$. This happens if and only if $z_1=\cdots=z_i=1$ and there exists an integer $j$
such that $i<j\leq k$ and $z_j\neq 1$. Thus, $a({\bf z})_k= a(z_{i+1},\ldots,z_k)_{k-i}=1$ and in this case
$$
a_{i+1}+\cdots+a_k\geq (k-i)-a(z_{i+1},\ldots,z_k)_{k-i}.
$$
Therefore, we get $(a_{i+1},\ldots,a_r)\in \Z^{r-i}\cap{\overline {U_{r-i}(z_{i+1},\ldots, z_r})}$. 

For $\mathbf{a}=(a_1,\ldots,a_r)\in \Z^r\cap \overline{U_r(\mathbf{z})}$, we write
\eqref{behaviour-boundary} for every point of the form $(a_{i+1},\ldots,a_r)$, where
$i \in \mathcal{I}(\mathbf{z},\mathbf{a})$. All these equations can be concisely written
in the following matrix form:
$$
{\bf V}^{\text{Reg}}={\bf M}{\bf V},
$$
where ${\bf V}$ and ${\bf V}^{\text{Reg}}$ are column vectors indexed by the elements of 
$\mathcal{I}(\mathbf{z},\mathbf{a})$, such that for each $i\in \mathcal{I}(\mathbf{z},\mathbf{a})$, 
the corresponding entries of ${\bf V}$ and ${\bf V}^{\text{Reg}}$ are $\mathrm{Li}_{(z_{i+1},\ldots,z_r)}(s_{i+1},\ldots,s_r)$
and $\mathrm{Li}^{\text{Reg}}_{(z_{i+1},\ldots,z_r;a_{i+1},\ldots,a_r)}(s_{i+1},\ldots,s_r)$, respectively,
and ${\bf M}$ is an upper triangular matrix with entries in the field of rational functions
$\C(s_1,\ldots,s_r)$, with the diagonal entries as $1$. Hence ${\bf M}$ is an invertible matrix with entries in
$\C(s_1,\ldots,s_r)$ and we can write
$$
{\bf V}={\bf M}^{-1}{\bf V}^{\text{Reg}}.
$$
Now the first entry of the above matrix equation gives an expression of the form
$$
\mathrm{Li}_{\bf z}(s_{1},\ldots,s_r)=\sum_{i\in \mathcal{I}(\mathbf{z},\mathbf{a})}R_i(s_1,\ldots,s_i)
\mathrm{Li}^{\text{Reg}}_{(z_{i+1},\ldots,z_r;a_{i+1},\ldots,a_r)}(s_{i+1},\ldots,s_r),
$$
where $R_0(\varnothing):=1$ and for $i \in \mathcal{I}({\bf z,a})$ with $i \ge 1$, $R_i(s_1,\ldots,s_i)$
is a rational function in $s_1,\ldots,s_i$. This is the desired Laurent type expansion of
$\mathrm{Li}_{\bf z}(s_{1},\ldots,s_r)$ around ${\bf a}$.
This can be seen in the following examples.
\begin{ex}\rm
For $r=2, \mathbf{z}=(1,-1)$ and $\mathbf{a}=(1,1)$, we have seen that
$\mathcal{I}(\mathbf{z},\mathbf{a})=\{0,1\}$ and hence by \eqref{behaviour-boundary},  we have
$$
\mathrm{Li}^{\text{Reg}}_{(\mathbf{z};\mathbf{a})}(s_1,s_2)=
\mathrm{Li}_{\mathbf{z}}(s_1,s_2)-\frac{\mathrm{Li}_{(-1)}{(s_2)}}{s_1-1}.
$$
Again, around the point $s_2=1$, we have $\mathrm{Li}^{\text{Reg}}_{(-1;1)}{(s_2)}=\mathrm{Li}_{(-1)}{(s_2)}$, and thus
we recover \eqref{exm-mp-1}.
\end{ex}
A more involved example is as follows:
\begin{ex}\rm
For $r=3, {\bf z}=(1,-1,-1)$ and ${\bf a}=(1,1,0)$, we have $\mathcal{I}(\mathbf{z},\mathbf{a})=\{0,1,3\}$. In this case, we have
$$
\begin{bmatrix}
 \mathrm{Li}^{\text{Reg}}_{({\bf z}; {\bf a})}(s_1,s_2,s_3)\\
 \mathrm{Li}^{\text{Reg}}_{(-1,-1;1,0)}(s_2,s_3)\\
1
\end{bmatrix}
= 
\begin{bmatrix}
1  &-\frac{1}{s_1-1} &-\frac{1}{2(s_3+s_2-1)(s_3+s_2+s_1-2)} \\ 
     0                         &1             &\frac{1}{2(s_3+s_2-1)}\\
         0                            &0                   &1
\end{bmatrix}
 \begin{bmatrix}
 \mathrm{Li}_{\bf z}(s_1,s_2,s_3)\\
 \mathrm{Li}_{(-1,-1)}(s_2,s_3)\\
1
\end{bmatrix}.
$$
Therefore,
$$ 
 \begin{bmatrix}
 \mathrm{Li}_{\bf z}(s_1,s_2,s_3)\\
 \mathrm{Li}_{(-1,-1)}(s_2,s_3)\\
1
\end{bmatrix}
=
\begin{bmatrix}
1  &\frac{1}{s_1-1} &-\frac{1}{2(s_1-1)(s_3+s_2+s_1-2)} \\ 
     0                         &1             &-\frac{1}{2(s_3+s_2-1)}\\
         0                            &0                   &1
\end{bmatrix}
\begin{bmatrix}
 \mathrm{Li}^{\text{Reg}}_{({\bf z}; {\bf a})}(s_1,s_2,s_3)\\
 \mathrm{Li}^{\text{Reg}}_{(-1,-1;1,0)}(s_2,s_3)\\
1
\end{bmatrix},
$$
and hence we get the Laurent type expansion around the point ${\bf a}=(1,1,0)$, given by
$$
\mathrm{Li}_{\bf z}(s_1,s_2,s_3)= \mathrm{Li}^{\text{Reg}}_{({\bf z}; {\bf a})}(s_1,s_2,s_3)+\frac{ \mathrm{Li}^{\text{Reg}}_{(-1,-1;1,0)}(s_2,s_3)}{s_1-1}-\frac{1}{2(s_1-1)(s_3+s_2+s_1-2)}.
$$
\end{ex}

In the next section, we setup the intermediate steps required to prove \thmref{main-thm-section-2}.

\section{Intermediate steps for \thmref{main-thm-section-2}}\label{sec-inter}


We first recall the definition of the multiple polylogarithm-star function. Let $r\geq 1$ be an integer. In what follows,
${\bf z}=(z_1,\ldots,z_r) \in \C^r$ is fixed such that $|z_i| \le1$ for all $1\leq i\leq r$, unless otherwise stated. The multiple polylogarithm-star
function of depth $r$ is defined on $U_r$ by
\begin{align}\label{mp-star}
\mathrm{Li}^{\star}_{\bf z}(s_1,\ldots,s_r)=\mathrm{Li}^{\star}_{(z_1,\ldots,z_r)}(s_1,\ldots,s_r)
:= \sum_{n_1\geq \cdots \geq n_r \geq 1}\frac{z_1^{n_1}\cdots z_r^{n_r}}{n_1^{s_1}\cdots n_r^{s_r}}.
\end{align}
This series converges uniformly on every compact subset of the open set $U_r$. 
Therefore, it defines a holomorphic function on $U_r$. Since the multiple polylogarithm-star function of
depth $r$ can be written as a linear combination
of the multiple polylogarithm functions of depth $ \le r$ (can be seen using an analogue of \cite[eq. (33)]{MSV}),
it can be extended as a meromorphic  function on $\C^r$. As before, 
the depth $0$ multiple polylogarithm-star function is taken to be the constant function $1$.

%
 
Similarly, for an integer $N\geq1$ and $(s_1,\ldots,s_r)\in U_r$, consider the tail
\begin{align}\label{poly-star-N}
 \mathrm{Li}^{\star}_{\bf z}(s_1,\ldots,s_r)_{\ge N}= \mathrm{Li}^{\star}_{(z_1,\ldots,z_r)}(s_1,\ldots,s_r)_{\geq N} := 
 \sum_{n_1\geq \cdots \geq n_r \geq N}\frac{z_1^{n_1}\cdots z_r^{n_r}}{n_1^{s_1}\cdots n_r^{s_r}}.
\end{align}
Again for each integer $N\geq1$, $\mathrm{Li}^{\star}_{\bf z}(s_1,\ldots,s_r)_{\geq N}$ is holomorphic on 
 $U_r$ and can be extended to a meromorphic function on $\C^r$. As usual, the depth $0$ analogue of this function
 is taken to be the constant function $1$. We first note that the tail of the multiple polylogarithm-star function
 $\mathrm{Li}^{\star}_{\bf z}(s_1,\ldots,s_r)_{\geq N}$ satisfies a translation formula, which is proved in the following section.

\begin{prop}\label{trans}
Let $N \ge 2$ and $r\geq 1$ be integers and $(s_1,\ldots,s_r)\in U_r$. Then for $r=1$, we have
\begin{equation}\label{eq1}
\left(1-\frac{1}{z_1}\right)\mathrm{Li}^{\star}_{(z_1)}(s_1-1)_{\geq N} +  \frac{z_1^{N-1}}{N^{s_1-1}}
=\sum_{k\geq0} (-1)^k\frac{(s_1-1)_{k+1}}{(k+1)!}\mathrm{Li}^{\star}_{(z_1)}(s_1+k)_{\geq N},
\end{equation}
and for $r>1$, we have
 \begin{align}\label{eq2}
\begin{split}
&\left(1-\frac{1}{z_1}\right)\mathrm{Li}^{\star}_{\bf z}(s_1-1,s_{2},\ldots,s_r)_{\geq N}+
\frac{1}{z_1}\mathrm{Li}^{\star}_{(z_{[1,2]},z_{3},\ldots,z_r)}(s_1+s_{2}-1,s_{3},\ldots,s_r)_{\geq N}\\
&=\sum_{k\geq0}(-1)^k\frac{(s_1-1)_{k+1}}{(k+1)!}\mathrm{Li}^{\star}_{\bf z}(s_1+k,s_2,\ldots,s_r)_{\geq N},
\end{split}
\end{align}
where for any complex number $s$ and non-negative integer $k$, $(s)_k$ denotes the Pochhammer symbol 
 $s(s+1)\cdots(s+k-1)$.
\end{prop}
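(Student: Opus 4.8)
The plan is to reduce the whole proposition to a single elementary binomial expansion and then sum that expansion against the monomial $z_1^{n_1}$ inside the innermost summation variable. The base identity I would establish first is that, for every $s\in\C$ and every integer $n\geq 2$,
\[
\frac{1}{n^{s-1}}=\frac{1}{(n+1)^{s-1}}+\sum_{k\geq0}(-1)^k\frac{(s-1)_{k+1}}{(k+1)!}\,\frac{1}{n^{s+k}},
\]
the series on the right converging absolutely because $|1/n|<1$. This comes from writing $(n+1)^{1-s}=n^{1-s}(1+\tfrac1n)^{1-s}$, expanding $(1+\tfrac1n)^{1-s}=\sum_{j\geq0}\binom{1-s}{j}n^{-j}$, simplifying $\binom{1-s}{j}=(-1)^j(s-1)_j/j!$, and peeling off the $j=0$ term. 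This is the only place where the hypothesis $N\geq 2$ enters: the identity is applied only at integers $n\geq N$.

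For $r=1$ I would multiply this identity, with $s=s_1$, by $z_1^n$ and sum over $n\geq N$. On the right one may interchange $\sum_{n\geq N}$ with $\sum_{k\geq0}$: for $(s_1)\in U_1$, i.e. $\Re(s_1)>1$,
\[
\sum_{n\geq N}\sum_{k\geq0}\Big|\frac{(s_1-1)_{k+1}}{(k+1)!}\Big|\frac{|z_1|^n}{n^{\Re(s_1)+k}}\ \le\ \Big(\sum_{k\geq0}\Big|\frac{(s_1-1)_{k+1}}{(k+1)!}\Big|N^{-k}\Big)\Big(\sum_{n\geq N}\frac{1}{n^{\Re(s_1)}}\Big)<\infty,
\]
the first factor being finite because the Pochhammer quotients grow only polynomially in $k$ while $N^{-k}$ decays geometrically. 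This turns the right-hand side into $\sum_{k\geq0}(-1)^k\frac{(s_1-1)_{k+1}}{(k+1)!}\,\mathrm{Li}^{\star}_{(z_1)}(s_1+k)_{\geq N}$. On the left, after the substitution $m=n+1$ the contribution of $(n+1)^{1-s_1}$ equals $\tfrac1{z_1}\big(\mathrm{Li}^{\star}_{(z_1)}(s_1-1)_{\geq N}-z_1^{N}N^{1-s_1}\big)$, and collecting terms gives \eqref{eq1}. To make the rearrangement of $\mathrm{Li}^{\star}_{(z_1)}(s_1-1)_{\geq N}$ itself literal I would first work on the open set $\Re(s_1)>2$, where every series in sight converges absolutely, and then extend \eqref{eq1} to all of $U_1$ by analytic continuation, using that both sides are holomorphic there — for $z_1=1$ the factor $1-1/z_1$ annihilates the only possibly singular term, and for $z_1\neq1$ the function $\mathrm{Li}^{\star}_{(z_1)}$ is entire.

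For $r>1$ the argument is the same computation carried out on the innermost variable $n_1$. I would first restrict to the nonempty open set $\Omega\subseteq U_r$ defined by $\Re(s_1-1+s_2+\cdots+s_i)>i$ for all $1\le i\le r$ (for $r=1$ this is $\Re(s_1)>2$), so that every multiple series below converges absolutely. Grouping the tail \eqref{poly-star-N} by $(n_2,\dots,n_r)$ gives
\[
\mathrm{Li}^{\star}_{\bf z}(s_1-1,s_2,\dots,s_r)_{\geq N}=\sum_{n_2\geq\cdots\geq n_r\geq N}\frac{z_2^{n_2}\cdots z_r^{n_r}}{n_2^{s_2}\cdots n_r^{s_r}}\ \sum_{n_1\geq n_2}\frac{z_1^{n_1}}{n_1^{s_1-1}},
\]
and I would apply the base identity to the inner sum, interchange orders of summation exactly as above, and substitute $m_1=n_1+1$ in the $(n_1+1)^{1-s_1}$-piece. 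The boundary term produced by that substitution is $\tfrac1{z_1}z_1^{n_2}n_2^{1-s_1}$; multiplied by $z_2^{n_2}n_2^{-s_2}$ and summed over $n_2\geq n_3\geq\cdots\geq n_r\geq N$ it becomes exactly $\tfrac1{z_1}\mathrm{Li}^{\star}_{(z_{[1,2]},z_3,\dots,z_r)}(s_1+s_2-1,s_3,\dots,s_r)_{\geq N}$, since $z_1^{n_2}z_2^{n_2}=z_{[1,2]}^{n_2}$ and $n_2^{s_1-1}n_2^{s_2}=n_2^{s_1+s_2-1}$. Assembling the pieces gives \eqref{eq2} on $\Omega$; since $\Omega$ is a nonempty open subset of the connected open set $U_r$ on which both sides of \eqref{eq2} are holomorphic (for the right-hand side, the same polynomial-versus-geometric estimate shows the series of tails converges locally uniformly on $U_r$), the identity extends to all of $U_r$ by analytic continuation.

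I expect the main obstacle to be the convergence bookkeeping rather than any algebra: one has to justify the Fubini interchange of the $n$- and $k$-summations — where the mismatch between the polynomial growth of $(s-1)_{k+1}/(k+1)!$ and the geometric decay supplied by $N\geq2$ is precisely what makes it work — and one has to keep in mind that on $U_r$ the symbol $\mathrm{Li}^{\star}_{\bf z}(s_1-1,s_2,\dots,s_r)_{\geq N}$ in general denotes the meromorphic continuation and not a convergent series, which is exactly what forces the "prove on a subdomain, then continue" step. The binomial expansion, the reindexing $m=n+1$, and the identification of the contracted term are all routine.
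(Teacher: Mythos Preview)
Your proposal is correct and follows essentially the same route as the paper: both start from the binomial identity $n^{1-s}-(n+1)^{1-s}=\sum_{k\ge0}(-1)^k\frac{(s-1)_{k+1}}{(k+1)!}n^{-s-k}$, multiply by $z_1^{n_1}\cdots z_r^{n_r}/n_2^{s_2}\cdots n_r^{s_r}$, sum over $n_1\ge\cdots\ge n_r\ge N$, and reindex the $(n_1+1)$--term. The only differences are packaging: the paper isolates the Fubini step into a preparatory lemma (\lemref{uni-convg}) giving normal convergence on compacta, whereas you give a direct polynomial-versus-geometric estimate and then extend from a subdomain by analytic continuation; both are adequate here.
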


 \begin{rmk}\rm
 In \eqref{eq1} and \eqref{eq2} above, the first term on the left-hand side is considered to be $0$ if $z_1=1$ and when
 $z_1\ne 1$, note that $(s_1-1 ,s_2,\ldots,s_r)\in U_r({\bf z})$ and hence the series defining
$\mathrm{Li}^{\star}_{\bf z}(s_1-1,s_{2},\ldots,s_r)_{\geq N}$ converges (as can be seen from \cite{PSBS}).
The identities \eqref{eq1} and \eqref{eq2} also extend to $\C^r$, as equality between the meromorphic functions.
 \end{rmk}
 
 Now to prove \thmref{main-thm-section-2}, the key step is to find the relation between 
 the multiple polylogarithm functions, the tail of the multiple polylogarithm-star functions
 and the partial sum of the series defining the multiple polylogarithm functions.
 Let $r\geq1$ and $N\geq2$ be integers. Consider the holomorphic function
 \begin{align*}
 \mathrm{Li}_{\bf z}(s_1,\ldots,s_r)_{<N} = \mathrm{Li}_{(z_1,\ldots,z_r)}(s_1,\ldots,s_r)_{<N}:=\sum_{N>n_1>\cdots>n_r>0}\frac{z_1^{n_1}\cdots z_r^{n_r}}{n_1^{s_1}\cdots n_r^{s_r}}
 \end{align*}
 on $\C^r$. Then we have the following analogue of \cite[Theorem 4]{BS2}.
 
 \begin{thm} \label{combi}
The following equality holds between the meromorphic functions on $\C^r$:
\begin{align}\label{combi-indenty}
\mathrm{Li}_{\bf z}(s_1,\ldots,s_r)_{< N}=\sum_{i=0}^{r}(-1)^i\mathrm{Li}^{\star}_{(z_i,\ldots,z_1)}(s_i,\ldots,s_1)_{\geq N}\mathrm{Li}_{(z_{i+1},\ldots,z_r)}(s_{i+1},\ldots,s_r).
\end{align}
\end{thm}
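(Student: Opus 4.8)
The plan is to prove \eqref{combi-indenty} by a purely combinatorial decomposition of the index set of the finite sum defining $\mathrm{Li}_{\bf z}(s_1,\ldots,s_r)_{<N}$, followed by a standard analytic continuation argument. First I would work in the region where every series in sight converges absolutely: take $\Re(s_j)$ large for all $j$, so that $(s_1,\ldots,s_r)\in U_r$, $(s_{i+1},\ldots,s_r)\in U_{r-i}$, and each tail $\mathrm{Li}^{\star}_{(z_i,\ldots,z_1)}(s_i,\ldots,s_1)_{\ge N}$ converges (the finite sum $\mathrm{Li}_{\bf z}(\cdot)_{<N}$ is entire, so no restriction there). Since all three families of objects in \eqref{combi-indenty} are meromorphic on $\C^r$ and agree on a nonempty open set, the identity will then hold everywhere by the identity theorem for meromorphic functions in several variables.

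The combinatorial heart is the following. A general multi-index $n_1>n_2>\cdots>n_r>0$ with all $n_j<N$ is, conversely, obtained from the full range $n_1>\cdots>n_r\ge 1$ by imposing $n_1<N$. I would instead reverse the roles: consider the sum $\mathrm{Li}_{(z_{i+1},\ldots,z_r)}(s_{i+1},\ldots,s_r)$ over $n_{i+1}>\cdots>n_r\ge 1$ with \emph{no} upper bound, and multiply it by $\mathrm{Li}^{\star}_{(z_i,\ldots,z_1)}(s_i,\ldots,s_1)_{\ge N}$, which runs over $N\le n_1\le n_2\le\cdots\le n_i$ (note the reversed ordering of indices, matching the reversed argument list $(z_i,\ldots,z_1)$, $(s_i,\ldots,s_1)$). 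For a fixed tuple $(n_1,\ldots,n_r)$ of positive integers that is \emph{not} strictly decreasing throughout — equivalently, for which $\min\{n_1,\ldots\}$ situation is non-generic — one tracks which "initial block" lies above $N$ and is weakly increasing while the complementary tail is strictly decreasing. Carrying out the product $\mathrm{Li}^{\star}\cdot\mathrm{Li}$ and collecting, the contribution of $(n_1,\ldots,n_r)$ to $\sum_i(-1)^i(\cdots)$ telescopes: it is $1$ precisely when $n_1>\cdots>n_r\ge 1$ \emph{and} $n_1<N$ (the term $i=0$ survives, all others cancel in pairs via the inclusion–exclusion identity $\sum(-1)^i\binom{\cdot}{\cdot}=0$ coming from boundary cases $n_j=n_{j+1}$ and $n_1\ge N$), and is $0$ otherwise. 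This is exactly the combinatorial lemma underlying \cite[Theorem 4]{BS2}, and I would follow that argument essentially verbatim, with the roots of unity $z_j$ coming along for the ride since $z_1^{n_1}\cdots z_r^{n_r}$ depends only on the multiset of $(j,n_j)$ pairs and factors through the decomposition cleanly.

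Concretely, the key steps in order: (1) restrict to $\Re(s_j)\gg 0$ and note absolute convergence of every series on the right-hand side; (2) expand the right-hand side as an iterated/absolutely convergent double sum over pairs of index tuples and rewrite it as a single sum over all $r$-tuples $(n_1,\ldots,n_r)$ of positive integers, with each term weighted by $z_1^{n_1}\cdots z_r^{n_r} n_1^{-s_1}\cdots n_r^{-s_r}$ times an integer coefficient $c(n_1,\ldots,n_r)$; (3) show $c(n_1,\ldots,n_r)=1$ if $n_1>\cdots>n_r$ and $n_1<N$, and $c=0$ otherwise, by the sign-reversing cancellation described above; (4) conclude \eqref{combi-indenty} on this open set, then extend to $\C^r$ by meromorphic continuation (all six ingredients — the finite sum, the tails $\mathrm{Li}^\star_{(\cdots)}(\cdots)_{\ge N}$, and the polylogarithms $\mathrm{Li}_{(z_{i+1},\ldots,z_r)}$ — are already known to be meromorphic on $\C^r$ by \thmref{poles-malf} and the remarks preceding \propref{trans}).

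The main obstacle I anticipate is step (3): getting the bookkeeping exactly right for the reversed orderings and the "weakly increasing above $N$ / strictly decreasing below" split, and verifying that \emph{all} the degenerate index tuples (those with some equality $n_j=n_{j+1}$, or with $n_1\ge N$, or both) contribute $0$. One must be careful that the cancellation is genuinely sign-reversing and involution-based rather than merely a numerical coincidence, and that the presence of the tail condition $n\ge N$ in the $\mathrm{Li}^\star$ factors interacts correctly with the case $n_1\ge N$ (where the whole tuple should contribute $0$, not $1$). The rest — absolute convergence estimates and the final analytic continuation — is routine given \thmref{poles-malf}, and the $r=1$ base case is an immediate check that orients the signs.
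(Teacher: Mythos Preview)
Your proposal is correct and is essentially the same approach as the paper's: the paper simply remarks that the proof follows that of \cite[Theorem~4]{BS2} verbatim, since only the combinatorics of the index tuples is used and the factors $z_j^{n_j}$ ride along unchanged. Your outline (work in the region of absolute convergence, expand and verify the coefficient identity by inclusion--exclusion on the split $N\le n_1\le\cdots\le n_i$ versus $n_{i+1}>\cdots>n_r>0$, then extend by meromorphic continuation) is exactly that argument spelled out.
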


\begin{proof}
The proof can be deduced following the proof of \cite[Theorem 4]{BS2}, as it uses only the combinatorics between the indices.
\end{proof}

In view of the above combinatorial formula we need to study 
$\mathrm{Li}^{\star}_{(z_r,\ldots,z_1)}(s_r,\ldots,s_1)_{\geq N}$ around the integer points in $\overline{U_r(\mathbf{z})}$
for $r \ge 1$. Several cases pertaining to this are covered in the following lemmas, whose proofs require \propref{trans}.
The proofs of these lemmas are given in the following section.


\begin{lem}\label{boundary-1}
Let $r\geq 1$  be an integer.
Let $\mathbf{a}=(a_1,\ldots,a_r)\in \Z^r$ be such that $a_1+\cdots+a_r>r-a(\mathbf{z})_r$. Then there exists a 
non-zero polynomial $P(s_1,\ldots,s_r)$ (with complex coefficients) and a polydisc $D$ around $\mathbf{a}$
such that the 
function $P(s_1,\ldots,s_r)\mathrm{Li}_{(z_r,\ldots,z_1)}^{\star}(s_r,\ldots,s_1)_{\geq N}$ is holomorphic on $D$
for every integer $N \ge 2$, and as $N\rightarrow\infty$,
$$
\left\| P(s_1,\ldots,s_r)\mathrm{Li}_{(z_r,\ldots,z_1)}^{\star}(s_r,\ldots,s_1)_{\geq N}\right\|_D\rightarrow 0.
$$ 
\end{lem}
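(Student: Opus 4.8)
The plan is to induct on the depth $r$, using the translation formula of \propref{trans} to trade a tail $\mathrm{Li}^\star_{(z_r,\ldots,z_1)}(s_r,\ldots,s_1)_{\ge N}$ with ``too large'' a sum of arguments for tails of strictly smaller depth, up to explicit rational multiples and a harmless boundary term. Note carefully that the star function here has its arguments in the reversed order $(z_r,\ldots,z_1)$ and $(s_r,\ldots,s_1)$; so the relevant ``first'' variable of the star function is $s_r$ (with root of unity $z_r$), and the condition $a_1+\cdots+a_r>r-a(\mathbf z)_r$ is exactly the statement that $(s_1,\ldots,s_r)=\mathbf a$ lies strictly inside the region where the reversed star series converges. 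Concretely, for $r=1$ the hypothesis reads $a_1>1-a(\mathbf z)_1$: either $z_1=1$ and $a_1\ge 2$, in which case $\mathrm{Li}^\star_{(z_1)}(s_1)_{\ge N}=\zeta(s_1)_{\ge N}$ is holomorphic near $\mathbf a$ and its sup-norm on a small disc tends to $0$ (tail of a normally convergent series); or $z_1\ne 1$ and $a_1\ge 1$, in which case \eqref{eq1} with $s_1$ replaced by $s_1+1$ expresses $(1-z_1^{-1})\mathrm{Li}^\star_{(z_1)}(s_1)_{\ge N}$ as $-z_1^{N-1}N^{-s_1}$ plus $\sum_{k\ge 1}(-1)^{k}\tfrac{(s_1)_{k+1}}{(k+1)!}\mathrm{Li}^\star_{(z_1)}(s_1+k)_{\ge N}$; since $a_1\ge 1$ every argument $s_1+k$ with $k\ge 1$ lies in $\Re(s)>1$ near $\mathbf a$, so the series converges normally and its tail, together with $z_1^{N-1}N^{-s_1}$, has sup-norm $\to 0$ on a small disc. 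Multiplying by the nonzero constant polynomial $P=1-z_1^{-1}$ (or $P=1$ in the first case) settles $r=1$.

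For the inductive step with $r>1$, distinguish two cases according to whether $z_r=1$. If $z_r\ne 1$, apply \eqref{eq2} with the role of ``$z_1$'' played by $z_r$ and ``$s_1$'' by $s_r$, after the shift $s_r\mapsto s_r+1$. This writes $(1-z_r^{-1})\mathrm{Li}^\star_{(z_r,\ldots,z_1)}(s_r,\ldots,s_1)_{\ge N}$ as
\[
-\tfrac{1}{z_r}\mathrm{Li}^\star_{(z_{[r-1,r]},z_{r-2},\ldots,z_1)}(s_r+s_{r-1},s_{r-2},\ldots,s_1)_{\ge N}
+\sum_{k\ge 1}(-1)^{k}\tfrac{(s_r)_{k+1}}{(k+1)!}\mathrm{Li}^\star_{(z_r,\ldots,z_1)}(s_r+k,s_{r-1},\ldots,s_1)_{\ge N}.
\]
The first term on the right is a depth-$(r-1)$ star tail; one checks that its integer point $(a_1,\ldots,a_{r-2},a_{r-1}+a_r)$ again satisfies the strict inequality of the lemma (the sum of all arguments is $a_1+\cdots+a_r>r-a(\mathbf z)_r\ge (r-1)-a(\mathbf z')_{r-1}$, where $\mathbf z'=(z_1,\ldots,z_{r-2},z_{[r-1,r]})$, because $z_r\ne 1$ forces $a(\mathbf z)_r=1$ and $a(\mathbf z')_{r-1}\le 1$), so the inductive hypothesis gives a nonzero polynomial $P_1$ and a polydisc on which $P_1$ times it is holomorphic with sup-norm $\to 0$. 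For the sum over $k\ge 1$: after multiplying through by the denominator polynomial $s_1+\cdots+s_r-(r-1)$ — which vanishes at $\mathbf a$ only if $a_1+\cdots+a_r=r-1$, a case that will have to be handled by a further application of \eqref{eq2} to remove exactly one unit from the argument sum, or absorbed into a later lemma — each summand $\mathrm{Li}^\star_{(z_r,\ldots,z_1)}(s_r+k,\ldots,s_1)_{\ge N}$ with $k\ge1$ has argument sum $\ge a_1+\cdots+a_r+1$, hence strictly exceeds the convergence boundary, so it is holomorphic near $\mathbf a$ with tail tending to $0$ uniformly; combined with an Abel/partial-summation estimate and the rapid decay of the Pochhammer coefficients, the whole series has sup-norm $\to 0$ on a small polydisc. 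If $z_r=1$ instead, one iterates: $\mathrm{Li}^\star_{(1,z_{r-1},\ldots,z_1)}(s_r,\ldots,s_1)_{\ge N}$ can be reduced via the $z_1=1$ version of the recursion (the identity still holds as meromorphic functions, with the $(1-z_1^{-1})$-term absent) to lower depth, again tracking that the integer point stays strictly inside the convergence region. Taking $P$ to be the product of the finitely many polynomials produced along the way, and $D$ the intersection of the finitely many polydiscs, completes the induction.

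The main obstacle I anticipate is the bookkeeping at the \emph{boundary} of the convergence region — precisely the borderline case where $a_1+\cdots+a_r=r-1$ (or $=r$) so that the denominator polynomial $s_1+\cdots+s_r-(r-1)$ introduced by the translation formula vanishes at $\mathbf a$. Because the lemma's hypothesis is the \emph{strict} inequality $a_1+\cdots+a_r>r-a(\mathbf z)_r$, one expects that after a bounded number of applications of \propref{trans} (each reducing either depth or the argument sum by one) one never actually lands on a pole; but proving this cleanly requires a careful induction that simultaneously controls (i) the depth, (ii) the argument sum relative to the moving convergence threshold, and (iii) the set of polar hyperplanes that could pass through the shrinking polydisc, so that the polynomial $P$ one accumulates really does clear all singularities. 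The other technical point needing care is the uniform estimate ``sup-norm $\to 0$'': one must justify interchanging the infinite sum over $k$ with the limit $N\to\infty$ and bound the tails uniformly in $N$ on a fixed polydisc, which follows from the normal convergence of $\sum_k (-1)^k\tfrac{(s)_{k+1}}{(k+1)!}\mathrm{Li}^\star(\,\cdot\,+k)_{\ge N}$ on compacta (the Pochhammer ratios decay like $1/k^{\Re(s)}$ uniformly, and each $\mathrm{Li}^\star$-tail is $O(N^{-\delta})$ uniformly for some $\delta>0$), but the exact constants depend on $\mathbf a$ and $\mathbf z$ and should be stated once and reused.
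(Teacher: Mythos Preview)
Your overall strategy matches the paper's: apply the translation formula \eqref{eq2} (or \eqref{eq1}) with the leading variable $s_r$, split into the cases $z_r=1$ and $z_r\ne 1$, and in the latter case shift $s_r\mapsto s_r+1$. However, the argument as written has a genuine gap in the inductive structure, and one piece of the computation is misdescribed.

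\textbf{The gap.} You induct only on the depth $r$, but the terms
\[
\mathrm{Li}^{\star}_{(z_r,\ldots,z_1)}(s_r+k,s_{r-1},\ldots,s_1)_{\ge N}\qquad(k\ge 1)
\]
on the right-hand side of the translation formula are still of depth $r$, so depth induction alone cannot dispose of them. Your sentence ``argument sum $\ge a_1+\cdots+a_r+1$, hence strictly exceeds the convergence boundary, so it is holomorphic near $\mathbf a$ with tail tending to $0$'' conflates the single inequality on the total sum with membership in $U_r$: the absolute convergence region $U_r$ for the reversed star series requires $\Re(s_r+\cdots+s_j)>r-j+1$ for \emph{every} $1\le j\le r$, and increasing $s_r$ by $k$ does not by itself place $(a_r+k,a_{r-1},\ldots,a_1)$ in $U_r$ unless $k$ is large enough. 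The paper closes this gap with a \emph{second} induction parameter: let $k_0$ be the least non-negative integer with $(a_r+k_0,a_{r-1},\ldots,a_1)\in U_r$. Then the tail $\sum_{k\ge k_0}$ is handled directly by \lemref{uni-convg} and \rmkref{series-tail} (this is where the uniform $o(1)$ comes from), while each term with $1\le k\le k_0-1$ has strictly smaller $k_0$-value and is handled by the inner induction. You actually anticipate this in your ``main obstacle'' paragraph (point (ii)), but the body of the argument does not implement it.

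\textbf{The spurious polynomial.} The factor ``$s_1+\cdots+s_r-(r-1)$'' you introduce does not arise from the translation formula and plays no role here. In the $z_r=1$ case, what appears is the coefficient $(s_r-1)_1=s_r-1$ of the $k=0$ term on the right of \eqref{eq2}; one must divide by it to isolate $\mathrm{Li}^{\star}_{(z_r,\ldots,z_1)}(s_r,\ldots,s_1)_{\ge N}$, and this is why the paper takes $P=(s_r-1)\cdot P_1$ in that case. Your worry about the borderline $a_1+\cdots+a_r=r-1$ is a non-issue under the lemma's strict hypothesis: when $z_r\ne1$ one has $a(\mathbf z)_r=1$, so $a_1+\cdots+a_r\ge r$, and the boundary case simply does not occur.
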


\begin{lem}\label{boundary-2}
Let $r\geq 1$  be an integer.
Assume that there exists at most one integer $t_r$ such that $1\leq t_r\leq r$ 
and $z_{[t_r,r]}\neq 1$ or $t_r=0$. Let $\mathbf{a}=(a_1,\ldots,a_r)\in \Z^{r}$ be such that 
$a_1+\cdots+a_r=r-a(\mathbf{z})_r$. Then there exists a 
non-zero polynomial $P(s_1,\ldots,s_r)$ (with complex coefficients) and a polydisc $D$ around $\mathbf{a}$ such that 
the function $P(s_1,\ldots,s_r)\mathrm{Li}_{(z_r,\ldots,z_1)}^{\star}(s_r,\ldots,s_1)_{\geq N}$ is holomorphic on $D$
for every integer $N \ge 2$, and as $N\rightarrow\infty$, 
$$
\left\|P(s_1,\ldots,s_r)\mathrm{Li}_{(z_r,\ldots,z_1)}^{\star}(s_r,\ldots,s_1)_{\geq N}
-\frac{P(s_1,\ldots,s_r)z_{[1,r]}^N N^{-(s_1+\cdots+s_r+a(\mathbf{z})_r-r)}}
{\prod_{1\leq j\leq r}H_{j}^{(r)}(s_j,\ldots,s_r;z_{[t_r,r]})}\right\|_D\rightarrow 0.
$$
\end{lem}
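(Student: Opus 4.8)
\textbf{Proof proposal for \lemref{boundary-2}.}

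The plan is to proceed by induction on $r$, using the translation formula from \propref{trans} to peel off the first variable and reduce to a depth-$(r-1)$ situation, while keeping careful track of the ``main term'' that carries the factor $z_{[1,r]}^N N^{-(\cdots)}$. First I would treat the base case $r=1$: here the hypothesis says either $z_1 = 1$ and $a_1 = 1$, or $z_1 \neq 1$ and $a_1 = 0$. In the first case $\mathrm{Li}^\star_{(z_1)}(s_1)_{\ge N}$ has a simple pole along $s_1 = 1$ with residue that produces the term $N^{-(s_1-1)}/(s_1-1) = N^{1-s_1}/H^{(1)}_1(s_1;0)$ up to $o(1)$, which one extracts directly from the Euler--Maclaurin expansion of the tail of the Hurwitz-type zeta function; the polynomial $P$ is $s_1-1$. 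In the second case $z_1$ is a nontrivial root of unity, $\mathrm{Li}^\star_{(z_1)}(s_1)_{\ge N}$ is entire, and using Abel summation (or \cite{PSBS}) the tail at $s_1 = 0$ behaves like $\frac{z_1^N}{z_1 - 1} + o(1) = \frac{z_{[1,1]}^N N^{-(s_1 - 0 - 1 + 1)}}{1 - z_{[1,1]}} + o(1)$, matching $H^{(1)}_1(s_1; z_1) = 1 - z_{[t_1,1]}$ with $t_1 = 1$; here $P = 1$.

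For the inductive step I would apply \eqref{eq2} to $\mathrm{Li}^\star_{(z_r,\ldots,z_1)}(s_r,\ldots,s_1)_{\ge N}$ to express $(1 - z_r^{-1})\mathrm{Li}^\star_{(z_r,\ldots,z_1)}(s_r-1,s_{r-1},\ldots,s_1)_{\ge N}$ plus a depth-$(r-1)$ star tail for the contracted tuple $(z_{[r-1,r]}, z_{r-2},\ldots,z_1)$ as an infinite sum of depth-$r$ star tails $\mathrm{Li}^\star_{(z_r,\ldots,z_1)}(s_r + k, s_{r-1},\ldots,s_1)_{\ge N}$. One then splits on whether $z_r = 1$ or not. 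If $z_r \neq 1$ (so $t_r$ exists with $t_r \leq r$; in fact if $z_{[1,r]} = 1$ then $t_r$ is uniquely determined and the reduced tuple's relevant quantities shift appropriately), the coefficient $1 - z_r^{-1}$ is a nonzero constant, so the depth-$(r-1)$ contracted term is negligible unless $a_r$ is at the boundary, and one feeds the shifted point $(a_r - 1, a_{r-1}, \ldots, a_1)$ — still satisfying the depth-$r$ boundary hypothesis with the same $t_r$ — to the inductive hypothesis, noting that the right-hand sum over $k$ has all terms at points strictly inside $\overline{U_r}$ except the $k=0$ term, which is $o(1)$ by an argument like \lemref{boundary-1}. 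The point is that the exponent $s_1 + \cdots + s_r + a(\mathbf{z})_r - r$ is invariant under the simultaneous shift $s_r \mapsto s_r - 1$, $a_r \mapsto a_r - 1$ together with the corresponding change in the combinatorial data, so the claimed main term is reproduced after multiplying through by a suitable polynomial to clear the pole coming from $H^{(r)}_r$ or from $1 - z_r^{-1}$. If $z_r = 1$ instead, the first term on the left of \eqref{eq2} vanishes, so the contracted depth-$(r-1)$ star tail $\mathrm{Li}^\star_{(z_{r-1}\text{-stuff})}(s_r + s_{r-1} - 1, s_{r-2}, \ldots, s_1)_{\ge N}$ equals the sum over $k$, and one applies the inductive hypothesis to that depth-$(r-1)$ tail at the point whose first coordinate is $a_r + a_{r-1} - 1$; the resulting main term, after translating the variable back and multiplying by $s_r + \cdots + s_{something} - (\cdots)$ coming from $(s_1 - 1)_{k+1}/(k+1)!$ evaluated appropriately, reproduces the factor $H^{(r)}_j$ for the appropriate $j < t_r$ or $j > t_r$.

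Throughout, the polynomial $P$ is built as a product of the finitely many linear forms $H^{(r)}_j$ with $j \neq t_r$ (together with whatever is needed from the base case), chosen so that $P \cdot \mathrm{Li}^\star(\cdots)_{\ge N}$ is holomorphic on a fixed polydisc $D$ around $\mathbf{a}$ uniformly in $N$; uniform holomorphicity on $D$ for every $N$ follows because each star tail is an entire function away from its finitely many polar hyperplanes and the poles are cleared by $P$, while the $N \to \infty$ estimate is obtained by feeding the $\|\cdot\|_D$-bounds from the inductive hypothesis and from \lemref{boundary-1} through the (uniformly convergent, on $D$) sum over $k$ in \propref{trans}. The main obstacle I anticipate is the bookkeeping in the inductive step: one must verify that after contracting $(z_r, \ldots, z_1) \mapsto (z_{[r-1,r]}, z_{r-2}, \ldots, z_1)$ (or after shifting $a_r \mapsto a_r - 1$), the value $a(\mathbf{z})_r$, the index $t_r$, and the exponent $s_1 + \cdots + s_r + a(\mathbf{z})_r - r$ all transform so that the claimed identity for depth $r$ matches what the inductive hypothesis delivers for depth $r-1$ — in particular showing that exactly one of the two reduction branches produces a genuine boundary-type main term while the other contributions are $o(1)$, and that the product $\prod_j H^{(r)}_j$ is assembled correctly with the single ``exceptional'' factor $1 - z_{[t_r,r]}$ in the right slot. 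Handling the case distinction $z_{[1,r]} = 1$ versus the sub-case where $t_r$ is forced, and making sure the hypothesis ``at most one $t_r$'' is preserved along the induction, is the delicate combinatorial core.
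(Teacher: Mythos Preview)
Your overall architecture --- induction on $r$, apply \propref{trans}, split on whether $z_r = 1$, and dispose of the shifted terms on the right-hand sum via \lemref{boundary-1} --- is exactly what the paper does, and your $z_r = 1$ branch is on the right track (if vague about how the factor $H^{(r)}_r(s_r;z_{[t_r,r]}) = s_r - 1$ emerges from the $k=0$ coefficient $(s_r-1)_1$). But the $z_r \neq 1$ branch has a real gap.

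You propose to ``feed the shifted point $(a_r - 1, a_{r-1}, \ldots, a_1)$ --- still satisfying the depth-$r$ boundary hypothesis --- to the inductive hypothesis,'' and you describe the contracted depth-$(r-1)$ term as ``negligible.'' Both claims are wrong. Shifting $a_r \mapsto a_r - 1$ gives coordinate sum $r - a(\mathbf{z})_r - 1$, strictly \emph{below} the boundary, so neither \lemref{boundary-1} nor the inductive hypothesis of \lemref{boundary-2} applies there; and the contracted term is not negligible --- it is precisely where the main term comes from. The correct move is first to observe that when $z_r \neq 1$ the ``at most one $t_r$'' hypothesis forces $t_r = r$, whence $z_{[j,r]} = 1$ for every $j < r$, i.e.\ $z_1 = \cdots = z_{r-2} = 1$ and $z_{r-1} z_r = 1$. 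Now substitute $s_r \mapsto s_r + 1$ in \eqref{eq2}, so that $(1 - z_r^{-1})\,\mathrm{Li}^\star_{(z_r,\ldots,z_1)}(s_r,\ldots,s_1)_{\geq N}$ itself appears on the left. The contracted tail is then $z_r^{-1}\,\mathrm{Li}^\star_{(z_{[r-1,r]},\,z_{r-2},\ldots,z_1)}(s_r + s_{r-1}, s_{r-2}, \ldots, s_1)_{\geq N}$, and since the new tuple is $(1,\ldots,1)$ this is a pure $\zeta^\star$-tail; the depth-$(r-1)$ inductive hypothesis (the $t_{r-1} = 0$ case, i.e.\ \cite[Lemma~3]{BS2}) delivers the main term $N^{-(s_1+\cdots+s_r+1-r)}/\prod_{1\le j\le r-1}(s_j+\cdots+s_r-(r-j))$. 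Every term on the right-hand side is now $\mathrm{Li}^\star_{(z_r,\ldots,z_1)}(s_r+k+1,\ldots)_{\geq N}$ with $k \geq 0$, at points of coordinate sum $\geq r > r - 1 = r - a(\mathbf{z})_r$, so \lemref{boundary-1} disposes of all of them. Dividing through by the nonzero constant $1 - z_r^{-1}$ and recognising $1 - z_r = H^{(r)}_r(s_r;z_{[t_r,r]})$ assembles the claimed product $\prod_{1\le j\le r} H^{(r)}_j$.
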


We also need the following lemma.

\begin{lem}\label{boundary-3}
Let $r\geq 2$ be an integer.
Assume that there exist at least two integers $i_1, i_2$ such that $1\leq i_1<i_2 \leq r$ and $z_{[i_1,r]},z_{[i_2,r]}\neq 1$.
Let $\mathbf{a}=(a_1,\ldots,a_r)\in \Z^{r}$ be such that $a_1+\cdots+a_r=r-a(\mathbf{z})_r$. 
Then there exists a non-zero polynomial $P(s_1,\ldots,s_r)$ (with complex coefficients)
and a polydisc $D$ around $\mathbf{a}$ such that the 
function $P(s_1,\ldots,s_r)\mathrm{Li}_{(z_r,\ldots,z_1)}^{\star}(s_r,\ldots,s_1)_{\geq N}$ is holomorphic on $D$
for every integer $N \ge 2$, and as $N\rightarrow\infty$,
$$
\left\| P(s_1,\ldots,s_r)\mathrm{Li}_{(z_r,\ldots,z_1)}^{\star}(s_r,\ldots,s_1)_{\geq N}\right\|_D\rightarrow 0.
$$ 
\end{lem}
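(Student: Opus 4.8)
## Proof proposal for Lemma 3 (\lemref{boundary-3})

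The plan is to induct on the depth $r$, exactly parallel to the strategy for \lemref{boundary-2}, but exploiting the extra hypothesis that the product $z_{[i,r]}$ fails to be $1$ for at least two distinct starting indices. The base case is $r=2$: here the hypothesis forces $z_{[1,2]} = z_1 z_2 \ne 1$ and $z_{[2,2]} = z_2 \ne 1$, so in particular $z_1 \ne 1$ as well (otherwise $z_{[1,2]} = z_2 = z_{[2,2]}$, and they cannot both be $\ne 1$ while also being forced to differ in a way that... actually one checks directly: if $z_1 = 1$ then $z_{[1,2]} = z_{[2,2]}$, which is allowed, so instead we argue that $\mathrm{Li}^{\star}_{(z_2,z_1)}(s_2,s_1)_{\ge N}$ is, up to polynomial factors coming from \propref{trans}, governed by the non-vanishing of the two factors $1 - 1/z_1$ and $1 - 1/z_{[1,2]}$). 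Applying \propref{trans} once to the outer variable $s_2$ (the variable attached to $z_2$) expresses $\mathrm{Li}^{\star}_{(z_2,z_1)}(s_2,s_1)_{\ge N}$ — after multiplying by the nonzero constant $(1 - 1/z_2)$ — as a combination of a shifted star function $\mathrm{Li}^{\star}_{(z_2 z_1)}(s_2 + s_1 - 1)_{\ge N}$ of depth $1$ and a convergent series of star functions $\mathrm{Li}^{\star}_{(z_2,z_1)}(s_2 + k, s_1)_{\ge N}$ with the shifted arguments moved strictly into $U_2$. Both pieces have $\| \cdot \|_D \to 0$ after multiplying by a suitable polynomial: for the depth-$1$ term this is the classical statement (the tail $\mathrm{Li}^{\star}_{(w)}(s)_{\ge N} \to 0$ uniformly on compacta where $\Re(s) > 1$, and the point $s_2 + s_1 - 1$ evaluated at $\mathbf{a}$ sits on the boundary line, which is absorbed by one more application of \propref{trans} or directly by the $r=1$ case of \lemref{boundary-2} since $z_2 z_1 \ne 1$), and for the tail sum one uses normal convergence on $U_2$ together with the decay of each individual tail.

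For the inductive step, suppose $r \ge 3$ and the lemma holds for all smaller depths. Apply \propref{trans} (in the form \eqref{eq2}) to the star function $\mathrm{Li}^{\star}_{(z_r,\ldots,z_1)}(s_r,\ldots,s_1)_{\ge N}$, peeling off the outermost variable. This produces three kinds of terms: (i) a term $(1 - 1/z_r)^{-1}$ times nothing — wait, more precisely the left side gives $(1 - 1/z_r) \mathrm{Li}^{\star}_{(z_r,\ldots,z_1)}(s_r, \ldots)_{\ge N}$ plus $\frac{1}{z_r}\mathrm{Li}^{\star}_{(z_{[r-1,r]}, z_{r-2},\ldots,z_1)}(s_r + s_{r-1} - 1, s_{r-2},\ldots,s_1)_{\ge N}$ on one side, and $\sum_{k \ge 0}(-1)^k \frac{(s_r-1)_{k+1}}{(k+1)!}\mathrm{Li}^{\star}_{(z_r,\ldots,z_1)}(s_r+k,\ldots)_{\ge N}$ on the other. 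The tail-sum side, after multiplying by an appropriate polynomial clearing the Pochhammer denominators near $\mathbf{a}$, decays to $0$ because each argument has been pushed strictly into the region of normal convergence — this is where we record the polynomial $P$. The contracted star function $\mathrm{Li}^{\star}_{(z_{[r-1,r]}, z_{r-2},\ldots,z_1)}$ has depth $r-1$; its parameter tuple still satisfies the "at least two bad starting indices" hypothesis \emph{or} the boundary condition on the sum of the $a_i$'s gets inherited, so we can invoke the inductive hypothesis (or \lemref{boundary-2} if the contraction has merged the two bad indices into a configuration with at most one). One must check that the corresponding integer point $(a_{r-1} + a_r, a_{r-2}, \ldots, a_1)$ — reindexed appropriately — still lies on the relevant boundary, which follows from $a_1 + \cdots + a_r = r - a(\mathbf{z})_r$ and the way $a(\mathbf{z})$ changes under the contraction $(z_{r-1}, z_r) \mapsto z_{r-1}z_r$.

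The main obstacle, and the reason this lemma is separated from \lemref{boundary-2}, is the following: after peeling off $s_r$ the contracted index set may have \emph{exactly one} bad starting index (if, say, $i_1 = r-1$ and $i_2 = r$, then $z_{[r-1,r]} = z_{r-1}z_r$ might equal... no, it's exactly $z_{[i_1,r]} \ne 1$, so it's still bad, but $z_{[r,r]}$ has disappeared), so I need to track carefully whether, after the contraction, I land in the hypothesis of \lemref{boundary-2} or of \lemref{boundary-3}, and in the former case verify that the extra main term predicted by \lemref{boundary-2} actually vanishes in the limit — which it does precisely because the denominator $\prod_j H_j^{(r)}$ in \lemref{boundary-2} would, under the two-bad-index hypothesis, contain the nonzero constant factor $1 - z_{[t,r]}$ twice over or else the point $\mathbf{a}$ fails the equality $a_1 + \cdots = r - a(\mathbf{z})_r$ needed for that main term to appear. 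The cleanest way to organize this is to first establish a uniform bound $\|\mathrm{Li}^{\star}_{(z_r,\ldots,z_1)}(s_r,\ldots,s_1)_{\ge N}\|_D = O(N^{-\delta} \cdot (\text{poly in }N \text{ from logs}))$ for some $\delta > 0$ by iterating \propref{trans} until every argument sits strictly inside $U_{\bullet}$, and only then read off that the limit is $0$. I would also handle the degenerate sub-case $z_r = 1$ (where the first term on the left of \eqref{eq2} vanishes) separately: then the contraction is forced and the two bad indices among $\{z_{[i,r]}\}_{i \le r}$ survive into depth $r-1$ automatically, so the induction closes without appeal to \lemref{boundary-2}.
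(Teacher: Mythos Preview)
Your overall architecture matches the paper's: induct on $r$, apply the translation formula \eqref{eq2} to peel off $s_r$, and split into the cases $z_r=1$ and $z_r\neq 1$. For $z_r=1$ you correctly observe that the two bad indices survive the contraction $(z_{r-1},z_r)\mapsto z_{[r-1,r]}=z_{r-1}$, so the inductive hypothesis for depth $r-1$ closes the argument; this is exactly what the paper does.

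The gap is in the case $z_r\neq 1$ (including your base case $r=2$). You write the translation formula with $s_r$ in the first term and $s_r+s_{r-1}-1$ in the contracted term; this is inconsistent (either both carry the $-1$ or neither does). More importantly, with the unshifted formula the contracted depth-$(r-1)$ function is evaluated at a point whose coordinate sum equals $(r-1)-1$, i.e.\ on the boundary for \lemref{boundary-2}. You then plan to invoke \lemref{boundary-2} and argue its main term $z_{[1,r]}^N N^{-(\cdots)}/\prod_j H_j$ ``actually vanishes'', but it does not: on a polydisc around a boundary point this term has modulus bounded away from $0$. Your justification (``the denominator would contain the constant factor $1-z_{[t,r]}$ twice over'') does not hold, since \lemref{boundary-2} only applies when there is \emph{at most one} such index, so only one constant factor appears.

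The paper resolves this cleanly by using the \emph{shifted} translation formula (replace $s_r$ by $s_r+1$ in \eqref{eq2}) when $z_r\neq 1$. Then the contracted term is $\mathrm{Li}^{\star}_{(z_{[r-1,r]},z_{r-2},\ldots,z_1)}(s_r+s_{r-1},s_{r-2},\ldots,s_1)_{\ge N}$, whose integer point has coordinate sum $a_1+\cdots+a_r=r-1$. Since at least one $i_1\le r-1$ has $z_{[i_1,r]}\neq 1$, one checks $a(z_1,\ldots,z_{r-2},z_{[r-1,r]})_{r-1}=1$, so the boundary for this depth-$(r-1)$ function sits at $(r-1)-1=r-2<r-1$. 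Hence \lemref{boundary-1} (not \lemref{boundary-2}) applies directly and gives decay to $0$ with no residual main term. The same shift handles your $r=2$ base case: the depth-$1$ contracted term becomes $\mathrm{Li}^{\star}_{(z_{[1,2]})}(s_1+s_2)_{\ge N}$ at the point $a_1+a_2=1>0$, and \lemref{boundary-1} finishes it.
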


Now we consider ${\bf z}=(z_1,\ldots,z_r) \in \C^r$ such that $z_1,\ldots,z_r$ are fixed roots of unity.
Let $\mathcal{C}_{\bf z}$ be the $\C$-algebra
generated by the constant sequence $(1)_{n\geq 1}$ and the sequences of the form $(z_i^n)_{n\geq 1}$ for $1 \le i \le r$
(see \cite[\S 2]{PSBS}). Note that as a $\C$-vector space, ${\mathcal C}_{\bf z}$ is finite dimensional, as the set
$\{z_1^{k_1} \cdots z_r^{k_r}: k_i \in \N \text{ for } 1 \le i \le r\}$ is finite.
We need the following lemma which is also key to the proof of  \thmref{main-thm-section-2}.
 
\begin{lem}\label{Taylor-lem}
Let $r\geq 1$ be an integer and $\mathbf{a}=(a_1,\ldots,a_r)$.
Let $D$ be a polydisc around $\mathbf{a}$
in $\C^r$ and $(u_N(s_1,\ldots,s_r))_{N\geq 1}$ be a sequence of holomorphic functions on $D$ such that it
converges uniformly to a holomorphic function $u(s_1,\ldots,s_r)$ in $D$. Suppose that
for all non-negative integers $k_1,\ldots,k_r$, the ${\bf k}=(k_1,\ldots,k_r)$-th coefficient of the
Taylor expansion of $u_N(s_1,\ldots,s_r)$ around $\mathbf{a}$ is of the form $P_{\bf k}(N,\log N)+o(1)$
as $N\rightarrow\infty$, where $P_{\bf k}(N,\log N)$ is a polynomial of the form
$\sum_{i,j \ge 0}a_{\bf k}^{(i,j)}(N)N^{i}(\log N)^j$ such that $(a_{\bf k}^{(i,j)}(N))_{N\geq1}\in \mathcal{C}_{\bf z}$ 
for all pair $(i,j)$. Then $P_{\bf k}(N,\log N)=a_{\bf k}^{(0,0)}(N)$ and the sequence
$(a_{\bf k}^{(0,0)}(N))_{N\geq 1}$ converges to a complex number $a_{\bf k}$. Moreover, the function
$u(s_1,\ldots,s_r)$ has the following power series expansion around ${\bf a}$ in the polydisc $D$:
$$
\sum_{k_1,\ldots,k_r\geq 0}a_{(k_1,\ldots,k_r)}(s_1-a_1)^{k_1}\cdots(s_r-a_r)^{k_r}.
$$
\end{lem}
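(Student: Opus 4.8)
The plan is to extract the asserted Taylor coefficient of the limit function $u$ by combining two facts: the uniform convergence $u_N \to u$, which forces termwise convergence of Taylor coefficients at $\mathbf{a}$, and a rigidity property of the finite-dimensional $\C$-algebra $\mathcal{C}_{\bf z}$, which forces the polynomial $P_{\bf k}(N,\log N)$ to collapse to its constant term. First I would record the standard fact from several complex variables: if holomorphic functions $u_N$ on a polydisc $D$ converge uniformly on $D$ to $u$, then for every multi-index ${\bf k}=(k_1,\ldots,k_r)$ the ${\bf k}$-th Taylor coefficient of $u_N$ at $\mathbf{a}$ converges (as $N\to\infty$) to the ${\bf k}$-th Taylor coefficient of $u$ at $\mathbf{a}$; this follows from the Cauchy integral formula for the coefficients together with uniform convergence on a suitable distinguished boundary inside $D$. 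Call the limit coefficient $a_{\bf k}$. By hypothesis the ${\bf k}$-th Taylor coefficient of $u_N$ equals $P_{\bf k}(N,\log N)+o(1)$, so $P_{\bf k}(N,\log N)=a_{\bf k}+o(1)$; i.e. the sequence $\bigl(P_{\bf k}(N,\log N)\bigr)_{N\ge 1}$ is convergent.

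The heart of the argument is then to show that a convergent sequence of the form $P_{\bf k}(N,\log N)=\sum_{i,j\ge0}a_{\bf k}^{(i,j)}(N)\,N^i(\log N)^j$, where each $\bigl(a_{\bf k}^{(i,j)}(N)\bigr)_N$ lies in $\mathcal{C}_{\bf z}$, must in fact have $a_{\bf k}^{(i,j)}(N)=0$ for all $(i,j)\neq(0,0)$, and that $\bigl(a_{\bf k}^{(0,0)}(N)\bigr)_N$ is itself convergent. For this I would first invoke that $\mathcal{C}_{\bf z}$ is spanned as a $\C$-vector space by the finitely many sequences $\omega(N):=z_1^{k_1N}\cdots z_r^{k_rN}$ ranging over the finite set of values of $z_1^{k_1}\cdots z_r^{k_r}$; grouping by the value $w=\omega(1)$ of such a character-like sequence, one writes $a_{\bf k}^{(i,j)}(N)=\sum_w c^{(i,j)}_w\, w^N$ with finitely many roots of unity $w$ and constants $c^{(i,j)}_w$. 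Substituting, $P_{\bf k}(N,\log N)=\sum_w w^N\Bigl(\sum_{i,j}c^{(i,j)}_w N^i(\log N)^j\Bigr)$. The asymptotic scales $w^N N^i(\log N)^j$ for distinct $(w,i,j)$ are linearly "independent at infinity" in the strong sense that a nontrivial finite $\C$-linear combination of them cannot tend to a finite limit unless all coefficients attached to any pair $(i,j)\neq(0,0)$ and all coefficients attached to $w\neq 1$ with $(i,j)=(0,0)$ vanish — the genuinely unbounded or oscillating blocks must cancel identically. Concretely: among all terms, pick the lexicographically largest $(i,j)$ with some $c^{(i,j)}_w\neq0$; if $(i,j)\neq(0,0)$ the corresponding block $N^i(\log N)^j\sum_w c^{(i,j)}_w w^N$ dominates and cannot cancel against lower-order blocks, so by a Vandermonde / Dirichlet-type independence argument over the distinct roots of unity $w$ one forces all $c^{(i,j)}_w=0$, a contradiction; hence the only surviving terms have $(i,j)=(0,0)$, giving $P_{\bf k}(N,\log N)=\sum_w c^{(0,0)}_w w^N=a_{\bf k}^{(0,0)}(N)$. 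Finally, convergence of $\sum_w c^{(0,0)}_w w^N$ to a finite limit, again by independence of the distinct geometric sequences $w^N$ over roots of unity $w$, forces $c^{(0,0)}_w=0$ for every $w\neq1$, so in fact $a_{\bf k}^{(0,0)}(N)$ is the constant $c^{(0,0)}_1=:a_{\bf k}$, which is trivially convergent. This also re-identifies the limit $a_{\bf k}$ with the Taylor coefficient of $u$, closing the loop.

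With all the Taylor coefficients of $u$ at $\mathbf{a}$ identified as the constants $a_{\bf k}$, the last assertion — that $u(s_1,\ldots,s_r)=\sum_{k_1,\ldots,k_r\ge0}a_{(k_1,\ldots,k_r)}(s_1-a_1)^{k_1}\cdots(s_r-a_r)^{k_r}$ throughout $D$ — is immediate: $u$ is holomorphic on the polydisc $D$, hence equals its Taylor series centered at $\mathbf{a}$ on all of $D$, and that series is precisely the displayed one. I expect the main obstacle to be the rigidity step, i.e. making the "linear independence at infinity" of the asymptotic scales $w^N N^i(\log N)^j$ fully rigorous over the finite set of roots of unity arising from $\mathcal{C}_{\bf z}$; the cleanest route is probably to argue by induction on the largest $(i,j)$ present (so that after dividing by $N^i(\log N)^j$ one is reduced to the pure exponential-polynomial case) and then use the classical fact that distinct characters $N\mapsto w^N$ of $\N$ are $\C$-linearly independent even "modulo $o(1)$", via averaging $\frac1M\sum_{N\le M} \overline{w_0}^{\,N}(\cdot)$ to isolate the $w_0$-coefficient. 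Everything else is bookkeeping with the Cauchy estimates.
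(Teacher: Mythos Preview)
Your proposal is correct and follows essentially the same approach as the paper: use uniform convergence plus Cauchy's formula to get convergence of the individual Taylor coefficients, and then argue that convergence of $P_{\bf k}(N,\log N)$ with $\mathcal{C}_{\bf z}$-coefficients forces all $(i,j)\neq(0,0)$ terms to vanish and the remaining $(0,0)$ term to be (eventually) constant. The paper simply asserts this rigidity step, whereas you supply the details via the basis $\{w^N\}$ of $\mathcal{C}_{\bf z}$, domination by the lexicographically top $N^i(\log N)^j$ block, and the character-orthogonality averaging $\frac{1}{M}\sum_{N\le M}\overline{w_0}^{\,N}(\cdot)$; your conclusion that $a_{\bf k}^{(0,0)}(N)$ is in fact constant (not merely eventually constant) is the sharp form of what the paper states.
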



\section{Proof of \thmref{main-thm-section-2}}\label{proof-boundary}
Now we are ready to prove \thmref{main-thm-section-2}. We need the following notations.
Suppose $z_{[1,i]}\neq 1$ for some $1\leq i\leq r$. Then clearly $i\notin \mathcal{I}(\mathbf{z},\mathbf{a})$.
Also, in that case $a(\mathbf{z})_i=1$. Now we define the set
$\mathcal{I}{'}(\mathbf{z},\mathbf{a})$ which is the collection of all the indices $1\leq i\leq r$ such that
\begin{enumerate}
\item[(I)] $z_{1}\neq 1$ and $z_{j}=1$ for all  $2\leq j\leq i$,
\item[(II)] $a_1+\cdots+a_i=i-1$.
\end{enumerate}

\begin{rmk}\label{effective-i}\rm
It is clear that $\mathcal{I}(\mathbf{z},\mathbf{a})\cap \mathcal{I}{'}(\mathbf{z},\mathbf{a})=\varnothing$ and the set 
$\mathcal{I}^{}(\mathbf{z},\mathbf{a})\cup\mathcal{I}{'}(\mathbf{z},\mathbf{a})$ is the collection of all the indices $0\leq i\leq r$
such that $a_1+\cdots+a_i=i-a(\mathbf{z})_i$ and there exists at most one integer $1\leq t_i\leq i$ with $z_{[t_i,i]}\neq 1$.
\end{rmk}

\begin{proof}[Proof of \thmref{main-thm-section-2}]
Since ${\bf a} \in \Z^r\cap \overline{U_r(\mathbf{z})}$, we have $(a_1,\ldots,a_i)\in \Z^i\cap \overline{U_i(\mathbf{z})}$
for each $1\leq i\leq r$. 
Hence $a_1+\cdots+a_i\geq i-a({\mathbf{z})_i}$ for $i=1,\ldots,r$. Now applying \lemref{boundary-1}, \ref{boundary-2} and \ref{boundary-3} to each of 
$\mathrm{Li}_{(z_i,\ldots,z_1)}^{\star}(s_i,\ldots,s_1)_{\geq N}$ for each $i=1,\ldots,r$, we can find a common polydisc $D_1$ around ${\bf a}$ 
and a common polynomial $P_1(s_1,\ldots,s_r)$
with the desired property. Furthermore, we can find a common polynomial $P_2(s_1,\ldots,s_r)$ and a polydisc $D_2$  around ${\bf a}$ 
such that the function $P_2(s_1,\ldots,s_r)\mathrm{Li}_{(z_{i+1},\ldots,z_r)}(s_{i+1},\ldots,s_r)$ is holomorphic on $D_2$ for each $i=0,\ldots,r-1$. 
Hence, in view of \thmref{combi}, we consider $D=D_1\cap D_2$ and $P(s_1,\ldots,s_r)=P_1(s_1,\ldots,s_r)P_2(s_1,\ldots,s_r)$
to see (using \rmkref{effective-i}) that the sequence of holomorphic functions $(u_N)_{N\geq 1}$ on $D$, given by
\begin{align*}
u_N:= &P(s_1,\ldots,s_r) \mathrm{Li}_{\bf z}(s_1,\ldots,s_r)_{<N} \\
                &-P(s_1,\ldots,s_r)\sum_{i \in \mathcal{I}(\mathbf{z},\mathbf{a})}\frac{(-1)^{i}N^{-(s_1+\cdots+s_i+a({\mathbf{z}})_i-i)}
                 \mathrm{Li}_{(z_{i+1},\ldots,z_r)}(s_{i+1},\ldots,s_r)}{\prod_{1\leq j\leq i}H_{j}^{(i)}(s_j,\ldots,s_i;z_{[t_i,i]})}\\
                &-P(s_1,\ldots,s_r)\sum_{i \in \mathcal{I}{'}(\mathbf{z},\mathbf{a})}\frac{(-1)^{i} z_{[1,i]}^{N}N^{-(s_1+\cdots+s_i+1-i)}
                \mathrm{Li}_{(z_{i+1},\ldots,z_r)}(s_{i+1},\ldots,s_r)}{\prod_{1\leq j\leq i}H_{j}^{(i)}(s_j,\ldots,s_i;z_{[t_i,i]})},
\end{align*}
converges normally to $0$ on $D$, as $N \to \infty$.

Note that for $i\in \mathcal{I}(\mathbf{z},\mathbf{a})\cup \mathcal{I}{'}(\mathbf{z},\mathbf{a})$,
 the Taylor series expansion of $N^{-(s_1+\cdots+s_i+a({\mathbf{z}})_i-i)}$ around the point $(a_1,\ldots,a_i)$ is 
 $$
 \sum_{k_1,\ldots,k_i\geq 0}\frac{(-\log N)^{k_1+\cdots+k_i}}{k_1!\cdots k_i!}(s_1-a_1)^{k_1}\cdots(s_i-a_i)^{k_i},
 $$
as for such $i$, we have $a_1+\cdots+a_i=i-a({\mathbf{z}})_i$. Therefore, applying \lemref{Taylor-lem} to the sequence of functions
 $(u_N)_{N\geq 1}$,  we get that the formal power series
 $$
 P(s_1,\ldots,s_r)\sum_{k_1,\ldots,k_r\geq 0}\frac{(-1)^{k_1+\cdots+k_r}}{k_1!\cdots k_r!}
 \ell_{[k_1,\ldots,k_r]}^{(a_1,\ldots,a_r)}{(\mathbf{z})}(s_1-a_1)^{k_1}\cdots(s_r-a_r)^{k_r}
 $$
 converges on $D$, and it is equal to 
  $$
 P(s_1,\ldots,s_r)\sum_{i \in \mathcal{I}(\mathbf{z};\mathbf{a})}
(-1)^i \frac{ \mathrm{Li}_{(z_{i+1},\ldots,z_r)}(s_{i+1},\ldots,s_r)}{\prod_{1\leq j\leq i}H_{j}^{(i)}(s_j,\ldots,s_i;z_{[t_i,i]})}
  $$
on $D$, taking only the contributions from the constant terms. Note that the indices
$i\in \mathcal{I}{'}(\mathbf{z},\mathbf{a})$ do not takes part in
this expression because $z_{[1,i]}\neq 1$, for such $i$. This implies that the
formal power series \eqref{power-series} converges on $D$. As it is denoted by
$\mathrm{Li}^{\text{Reg}}_{(\mathbf{z};\mathbf{a})}(s_1,\ldots,s_r)$, therefore in the polydisc $D$, we have
 \begin{align*}
\mathrm{Li}^{\text{Reg}}_{(\mathbf{z};\mathbf{a})}(s_1,\ldots,s_r)=\sum_{i\in \mathcal{I}(\mathbf{z},\mathbf{a})}(-1)^{i}
\frac{\mathrm{Li}_{(z_{i+1},\ldots,z_r)}(s_{i+1},\ldots,s_r)}{\prod_{1\leq j\leq i}H_{j}^{(i)}(s_j,\ldots,s_i;z_{[t_i,i]})}.
\end{align*}
 This completes the proof of \thmref{main-thm-section-2}.
 \end{proof}

\section{Proof of the intermediate steps}\label{proof-lemmas}

Since the proofs of \lemref{boundary-1}, \ref{boundary-2} and \ref{boundary-3}, require \propref{trans},
we first prove the following lemma which is required to prove \propref{trans}.

\begin{lem}\label{uni-convg}
Let $N \ge 2$ and $r\geq1$ be integers. Let $(z_1,\ldots,z_r)\in\C^r$ be such that $|z_i|\leq 1$ for all $1\leq i\leq r$. 
Let $K$ be any compact subset of $\C^r$ and $k_0$ be the smallest non-negative integer such 
that $(s_1+k_0,s_2,\ldots,s_r)\in U_r$ for all $(s_1,\ldots,s_r)\in K$. Then the sequence of functions
$$
\left(\frac{(s_1-1)_{k+1}}{(k+1)!}\frac{z_1^{n_1}z_2^{n_2}\cdots z_r^{n_r}}{n_1^{s_1+k}n_2^{s_2}\cdots n_r^{s_r}}\right)_{n_1\geq \cdots\geq n_r\geq N, k\geq k_0}
$$
converges normally on $K$. Moreover, the sum is $O(N^{-\epsilon})$ as $N\rightarrow\infty$ for some $\epsilon>0$.
\end{lem}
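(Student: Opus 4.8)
The plan is to reduce everything to absolutely convergent estimates on the compact set $K$, treating the double index $(n_1, \ldots, n_r; k)$ as a single summation index. First I would fix $K$ and choose $k_0$ as in the statement; since $K$ is compact and $U_r$ is open, such a $k_0$ exists, and moreover there is a uniform $\delta > 0$ with $\Re(s_1 + \cdots + s_i) \ge i + \delta$ for all $1 \le i \le r$ and all $(s_1 + k_0, s_2, \ldots, s_r)$ with $(s_1,\ldots,s_r) \in K$. Since $K$ is bounded, there is also a constant $C_K$ with $|s_1| \le C_K$ on $K$. The key elementary estimate is on the Pochhammer coefficient: for $s_1$ in a bounded set, $\left| \dfrac{(s_1 - 1)_{k+1}}{(k+1)!} \right| = \left| \dfrac{(s_1-1)s_1(s_1+1)\cdots(s_1+k-1)}{(k+1)!} \right|$, and using $|s_1 + j - 1| \le j + C_K$ together with $(k+1)! \ge \prod_{j=1}^{k+1} j$, one gets a bound of the form $\left| \dfrac{(s_1-1)_{k+1}}{(k+1)!} \right| \le C'_K \cdot (k+1)^{C_K}$ uniformly for $s_1$ on $K$ and all $k \ge 0$ — i.e. polynomial growth in $k$.

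Next I would estimate the tail sum in $n_1, \ldots, n_r$ for fixed $k \ge k_0$. The absolute value of $\dfrac{z_1^{n_1} \cdots z_r^{n_r}}{n_1^{s_1 + k} n_2^{s_2} \cdots n_r^{s_r}}$ is at most $n_1^{-(\Re(s_1) + k)} n_2^{-\Re(s_2)} \cdots n_r^{-\Re(s_r)}$ since $|z_i| \le 1$. Summing this over $n_1 \ge \cdots \ge n_r \ge N$ is precisely (an absolute-value version of) the tail of a multiple zeta-type series evaluated at the real point $(\Re(s_1) + k, \Re(s_2), \ldots, \Re(s_r))$, which lies in $U_r$ with the uniform margin $\delta$ above. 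A standard iterated comparison with integrals — summing first over $n_1$, then $n_2$, and so on, using $\sum_{n \ge m} n^{-\sigma} \ll m^{1-\sigma}$ when $\sigma > 1$ — bounds this tail by $C'' \cdot N^{-(k + \delta)}$ for a constant $C''$ depending only on $K$ (in fact one can extract $N^{-\delta}$ times a bound that decays geometrically in $k$, because the extra $+k$ in the exponent of $n_1$ only helps). Combining this with the polynomial-in-$k$ bound on the Pochhammer coefficient, the full sum is dominated by $\sum_{k \ge k_0} C'_K (k+1)^{C_K} \cdot C'' N^{-(k+\delta)} \le C''' N^{-\delta} \sum_{k \ge 0} (k+1)^{C_K} N^{-k}$, which converges (as $N \ge 2$) and is $O(N^{-\delta})$ as $N \to \infty$. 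This establishes normal convergence on $K$ — the bound is independent of the point of $K$ and the majorizing series of constants converges — and gives the claimed $O(N^{-\epsilon})$ with $\epsilon = \delta$.

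The one slightly delicate point is that the index $k$ ranges over $k \ge k_0$ rather than $k \ge 0$: only once $k \ge k_0$ is $(s_1 + k, s_2, \ldots, s_r)$ guaranteed to be in $U_r$, so that the inner multiple sum over $n_1 > \cdots > n_r$ (or $\ge$) converges at all. The truncation to $n_i \ge N$ with $N \ge 2$ is what makes even the $k = k_0$ term's tail small; I would note that the finitely many indices $0 \le k < k_0$ simply do not appear in the sequence under consideration, so there is nothing to check there. The main obstacle, such as it is, is bookkeeping the uniformity: making sure the constants $\delta$, $C_K$, $C'_K$, $C''$ genuinely depend only on $K$ (and $r$) and not on the point of $K$, on $N$, or on $k$ — this is where compactness of $K$ is used, and once that is set up the geometric decay in $k$ makes the summation over $k$ harmless.
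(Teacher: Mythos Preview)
Your proposal is correct and follows essentially the same approach as the paper: bound the Pochhammer factor by something of polynomial growth in $k$ uniformly on $K$, extract geometric decay in $k$ from the factor $n_1^{-k}$ (since $n_1\ge N\ge 2$), and control the remaining multiple sum over $n_1\ge\cdots\ge n_r\ge N$ by an iterated integral comparison at a real point of $U_r$ to obtain $O(N^{-\epsilon})$. The only cosmetic difference is that the paper first replaces $n_1^{-(s_1+k)}$ by $2^{-(k-k_0)}n_1^{-(s_1+k_0)}$, splitting the problem into a convergent $k$-sum $\sum_{k\ge k_0}\frac{A(A+1)\cdots(A+k)}{2^{k-k_0}(k+1)!}$ times a single tail estimate at exponent $k_0$, whereas you keep the stronger factor $N^{-(k-k_0)}$ inside the sum and combine everything at once; the paper's tail bound is stated as $C'' N^{-\delta}$ rather than your $C'' N^{-(k+\delta)}$, but this makes no difference to the conclusion.
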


 \begin{rmk}\label{series-tail} \rm
 For any point ${\bf a}=(a_1,\ldots,a_r)\in \C^r$, there exists a non-negative integer $k_0$ such that 
 $(a_1+k_0,a_2,\ldots,a_r)\in U_r$. Therefore, we can find a suitably small polydisc $D$
 around the point $(a_1+k_0,a_2,\ldots,a_r)$ in $U_r$ such that $\overline{D}\subseteq U_r$. Hence,
 there exists a polydisc $D$ around ${\bf a}$ such that for all $k\geq k_0$, 
 the function $\mathrm{Li}^{\star}_{\bf z}(s_1+k,s_2,\ldots,s_r)_{\geq N}$ is holomorphic on $D$.
 Moreover, applying \lemref{uni-convg}, we have
 $$
 \left\|\sum_{k\geq k_0}(-1)^k\frac{(s_1-1)_{k+1}}{(k+1)!}\mathrm{Li}^{\star}_{\bf z}(s_1+k,s_2,\ldots,s_r)_{\geq N}\right\|_D=o(1),
 $$
 as $N\rightarrow\infty$.
 \end{rmk}

\subsection{Proof of \lemref{uni-convg}}
 Let $A:=\sup_{(s_1,\ldots,s_r)\in K}|s_1-1|$, then for any positive integers $n_1,\ldots,n_r$, we have
$$
\left \|\frac{(s_1-1)_{k+1}}{(k+1)!}\frac{z_1^{n_1}z_2^{n_2}\cdots z_r^{n_r}}{n_1^{s_1+k}n_2^{s_2}\cdots n_r^{s_r}}\right \|_K
\leq \frac{A(A+1)\cdots (A+k)}{2^{k-k_0}(k+1)!}
\left \|\frac{1}{n_1^{s_1+k_0}n_2^{s_2}\cdots n_r^{s_r}}\right \|_K.
$$
Since $\sum_{k\geq k_0}\frac{A(A+1)\cdots (A+k)}{2^{k-k_0}(k+1)!}<\infty$, 
it is enough to prove that there exists $\epsilon>0$ such that the sum, 
$\sum_{n_1\geq\cdots\geq n_r\geq N}\left \|\frac{1}{n_1^{s_1}\cdots n_r^{s_r}}\right \|_X
=O(N^{-\epsilon})$ as $N\rightarrow\infty$, for any compact subset $X$ of $U_r$.
Since $X$ is a compact subset of $U_r$, there exists $(\sigma_1,\ldots,\sigma_r)\in\R^r\cap U_r$ 
such that $X\subseteq D(\sigma_1,\ldots,\sigma_r)$, where
$$
D(\sigma_1,\ldots,\sigma_r):=\{(s_1,\ldots,s_r)\in\C^r: \Re(s_i)\geq \sigma_i \text{ for all } 1\leq i\leq r\}.
$$
Then,
$\left \|\frac{1}{n_1^{s_1}\cdots n_r^{s_r}}\right \|_{D(\sigma_1,\ldots,\sigma_r)}\leq
\frac{1}{n_1^{\sigma_1}\cdots n_r^{\sigma_r}}$ 
for any positive integers $n_1,\ldots,n_r$. Therefore,
$$
\sum_{n_1\geq n_2}\frac{1}{n_1^{\sigma_1}}\leq\frac{1}{n_2^{\sigma_1}}+\int_{n_2}^{\infty}x^{-\sigma_1}dx
=\frac{1}{n_2^{\sigma_1}}+\frac{1}{1-\sigma_1}\frac{1}{n_2^{\sigma_1-1}} .
$$
Using this inequality (and its avatars), we can write
$$
\sum_{n_1\geq \cdots\geq n_r\geq N}\frac{1}{n_1^{\sigma_1}\cdots n_r^{\sigma_r}}\leq\sum_{i=0}^{r}
\frac{T_i(\sigma_1,\ldots,\sigma_i)}{N^{\sigma_1+\cdots+\sigma_r-i}},
$$
where for each $1 \le i \le r$, $T_i(\sigma_1,\ldots,\sigma_i)$ is a rational function whose denominator
is made of the factors of the form
$\sigma_1+\cdots+\sigma_j-k$ for $1\leq k\leq j\leq i\leq r$ and $T_0:=1.$
Since $(\sigma_1,\ldots,\sigma_r)\in\R^r\cap U_r$,
there exists $\epsilon>0$ such that $\sigma_1+\cdots+\sigma_i-i>\epsilon$ for all $1\leq i\leq r$. 
Therefore, $\sum_{n_1\geq \cdots\geq n_r\geq N}\frac{1}{n_1^{\sigma_1}\cdots n_r^{\sigma_r}}=O(N^{-\epsilon})$ as $N\rightarrow\infty$. 
This completes the proof.

\subsection{Proof of \propref{trans}}
We prove this by induction on $r$. First we assume that $z_1 \ne 1$. Let $r\geq1$ and $n_1\geq2$ be integers. Then for any $s_1\in \C$, we have
\begin{equation}\label{start}
\frac{1}{n_1^{s_1-1}}-\frac{1}{(n_1+1)^{s_1-1}}=\sum_{k\geq0}(-1)^k\frac{(s_1-1)_{k+1}}{(k+1)!}\frac{1}{n_1^{s_1+k}}.
\end{equation}
Now for $(s_1,\ldots,s_r)\in U_r$, if we multiply both sides of the above equation by
$z_1^{n_1}\frac{z_{2}^{n_{2}}\cdots z_r^{n_r}}{n_{2}^{s_{2}}\cdots n_r^{s_r}}$ and sum for $ n_1\geq \cdots \geq n_r\geq N$, we get
\begin{align}\label{trans-*}
\begin{split}
&\sum_{n_1\geq \cdots \geq n_r\geq N}\left(\frac{z_1^{n_1}z_{2}^{n_{2}}\cdots z_r^{n_r}}{n_1^{s_1-1}n_{2}^{s_{2}}\cdots n_r^{s_r}}-\frac{z_1^{n_1}
z_{2}^{n_{2}}\cdots z_r^{n_r}}{(n_1+1)^{s_1-1}n_{2}^{s_{2}}\cdots n_r^{s_r}}\right)\\
=&\sum_{n_1\geq \cdots \geq n_r\geq N}\sum_{k\geq 0}\frac{(-1)^{k}(s_1-1)_{k+1}}{(k+1)!}\frac{z_1^{n_1}z_{2}^{n_{2}}\cdots z_r^{n_r}}{n_1^{s_1+k}n_{2}^{s_{2}}\cdots n_r^{s_r}}.
\end{split}
\end{align}
 When $r=1$, then the left-hand side of the above equation is,
 \begin{align}\label{r=1}
 \begin{split}
 \sum_{n_1\geq N}\left(\frac{z_1^{n_1}}{n_1^{s_1-1}}-\frac{z_1^{n_1}}{(n_1+1)^{s_1-1}}\right)
 &=\sum_{n_1\geq N}\frac{z_1^{n_1}}{n_1^{s_1-1}}-\frac{1}{z_1}\sum_{n_1\geq N}\frac{z_1^{n_1}}{n_1^{s_1-1}}+\frac{1}{z_1}\frac{z_1^{N}}{N^{s_1-1}}\\
 &=\frac{z_1^{N-1}}{N^{s_1-1}}+\left(1-\frac{1}{z_1}\right)\sum_{n_1\geq N}\frac{z_1^{n_1}}{n_1^{s_1-1}}\\
 &=\frac{z_1^{N-1}}{N^{s_1-1}}+\left(1-\frac{1}{z_1}\right)\mathrm{Li}^{\star}_{(z_1)}(s_1-1)_{\geq N}.
 \end{split}
 \end{align}
 Note that if $z_1=1$, this is nothing but $1/N^{s_1-1}$.
 Since $s_1\in U_1$, using \lemref{uni-convg}, the right-hand side of \eqref{trans-*}, for $r=1$, becomes
 \begin{align*}
 \sum_{k\geq 0}(-1)^k\frac{(s_1-1)_{k+1}}{(k+1)!}\sum_{n_1\geq N}\frac{z_1^{n_1}}{n_1^{s_1+k}}= \sum_{k\geq 0}(-1)^k\frac{(s_1-1)_{k+1}}{(k+1)!}\mathrm{Li}^{\star}_{(z_1)}(s_1+k)_{\geq N}.
 \end{align*}
This completes the proof in the case $r=1$. Now assume that $r\geq 2$. Then as before,
the left-hand side of the equation \eqref{trans-*} is
 \begin{align*}
& \sum_{n_{2}\geq \cdots \geq n_r\geq N}\sum_{n_1\geq n_{2}}\left(\frac{z_1^{n_1}}{n_1^{s_1-1}}-\frac{z_1^{n_1}}{(n_1+1)^{s_1-1}}\right)\frac{z_{2}^{n_{2}}
\cdots z_r^{n_r}}{n_{2}^{s_{2}}\cdots n_r^{s_r}}\\
&=\sum_{n_{2}\geq \cdots \geq n_r\geq N}\left(\frac{z_1^{n_{2}-1}}{n_{2}^{s_1-1}}+
\left(1-\frac{1}{z_1}\right)\sum_{n_1\geq n_{2}}\frac{z_1^{n_1}}{n_1^{s_1-1}}\right)\frac{z_{2}^{n_{2}}
\cdots z_r^{n_r}}{n_{2}^{s_{2}}\cdots n_r^{s_r}}\\
&=\frac{1}{z_1}\sum_{n_{2}\geq \cdots \geq n_2\geq N}\frac{(z_{[1,2]})^{n_{2}}z_{3}^{n_{3}}\cdots z_r^{n_r}}{n_{2}^{s_1+s_{2}-1}n_{3}^{s_{3}}
\cdots n_r^{s_r}}+\left(1-\frac{1}{z_1}\right) \sum_{n_{1}\geq \cdots \geq n_r\geq N}\frac{z_{1}^{n_{1}}z_{2}^{n_{2}}\cdots z_r^{n_r}}{n_{1}^{s_{1}-1}n_{2}^{s_{2}}\cdots n_r^{s_r}}\\
&=\frac{1}{z_1}\mathrm{Li}^{\star}_{(z_{[1,2]},z_{2}\ldots,z_r)}(s_1+s_{2}-1,s_{3},\ldots,s_r)_{\geq N}
+\left(1-\frac{1}{z_1}\right)\mathrm{Li}^{\star}_{\bf z}(s_1-1,s_{2},\ldots,s_r)_{\geq N}.
 \end{align*}
Note that if $z_1=1$, this is nothing but $\mathrm{Li}^{\star}_{(z_{[1,2]},z_{2},\ldots,z_r)}(s_1+s_{2}-1,s_{3},\ldots,s_r)_{\geq N}$.
Again, using \lemref{uni-convg}, the right-hand side of the equation \eqref{trans-*} is 
 $$
 \sum_{k\geq0}(-1)^k\frac{(s_1-1)_{k+1}}{(k+1)!}\mathrm{Li}^{\star}_{\bf z}(s_1+k,s_2,\ldots,s_r)_{\geq N}.
 $$
 This completes the proof of \propref{trans}.

 
\subsection{Proof of \lemref{boundary-1}}
If $q({\mathbf{z}})=r+1$, i.e., $z_1=\cdots=z_r=1$, then $\mathrm{Li}^{\star}_{(z_r,\ldots,z_1)}(s_r,\ldots,s_1)_{\geq N}=\zeta^{\star}(s_r,\ldots,s_1)_{\geq N}$. 
Therefore, this case is done by \cite[Lemma 3]{BS2}. So we assume that there exists an integer $1\leq i\leq r$ such that $z_{i}\neq 1$.
Therefore, we have $a_1+\cdots+a_r>r-1$ from the hypothesis.
Now to prove the required assertion, we will use the double induction, first on the depth $r$ and then on the smallest
non-negative integer $k_0$ such that $(a_r+k_0,a_{r-1},\ldots,a_1)\in U_r$.
Note that if $k_0=0$, the result is immediate by \lemref{uni-convg}, hence we suppose $k_0 \ge 1$.

Assume that $r=1$. This implies that $z_1\neq 1$, $a_1>0$. Now by using the translation formula \eqref{eq1} 
and \rmkref{series-tail}, we have a disc $D$ around $a_1$ such that the sequence of meromorphic functions $(v_N)_{N\geq 2}$, given by
$$
v_N:= \left(1-\frac{1}{z_1}\right)\mathrm{Li}^{\star}_{(z_1)}(s_1)_{\geq N} + \frac{z_1^{N-1}}{N^{s_1}},
$$
is holomorphic on $D$ and converges normally to $0$ on $D$, as $N\rightarrow\infty$. Since $\frac{z_1^{N-1}}{N^{s_1}}$ 
converges normally to $0$ on $D$ (as $N\rightarrow\infty$), for the polynomial $P(s_1):=1$,
we get the required assertion for $r=1$.

Now assume that $r\geq 2$. Here using the translation formula \eqref{eq2} and \rmkref{series-tail}, 
we have a polydisc $D_1$ around ${\bf a}$ and the sequence of meromorphic function $(v_N)_{N\geq 2}$, given by
 \begin{align}\label{medstep-1}
 \begin{split}
v_N:=&\left(1-\frac{1}{z_r}\right)\mathrm{Li}^{\star}_{(z_r,z_{r-1},\ldots,z_1)}(s_r-1,s_{r-1},\ldots,s_1)_{\geq N}\\&+
\frac{1}{z_r}\mathrm{Li}^{\star}_{(z_{[r-1,r]},z_{r-2},\ldots,z_1)}(s_r+s_{r-1}-1,s_{r-2},\ldots,s_1)_{\geq N}\\
&-\sum_{k=0}^{k_0-1}(-1)^k\frac{(s_r-1)_{k+1}}{(k+1)!}\mathrm{Li}^{\star}_{(z_r,\ldots,z_1)}(s_r+k,s_{r-1},\ldots,s_1)_{\geq N},
\end{split}
\end{align}
is holomorphic on $D_1$ and converges normally to $0$ on $D_1$, as $N\rightarrow\infty$. Here $k_0$ is as above,
the smallest non-negative integer such that $(a_r+k_0,a_{r-1},\ldots,a_1)\in U_r$.

Now, first consider the case when $z_r=1$. Note that in this case it is enough to prove the result for 
$\mathrm{Li}^{\star}_{(z_{[r-1,r]},z_{r-2},\ldots,z_1)}(s_r+s_{r-1}-1,s_{r-2},\ldots,s_1)_{\geq N}$ and 
$\mathrm{Li}^{\star}_{(z_r, z_{r-1},\ldots,z_1)}(s_r+k, s_{r-1},\ldots,s_1)_{\geq N}$ with 
$1 \le k \le k_0-1$, as the first term in the expression of $v_N$ is $0$ in this case.
Note that $z_{[r-1,r]}=z_{r-1}$ and  $a({\bf z})_r=a({\bf z})_{r-1}$.  Therefore,
$$
a_1+\cdots+a_{r-2}+a_{r-1}+a_{r}-1>(r-1)-a(\mathbf{z})_{r-1}.
$$
Moreover, $a_1+\cdots+a_r+k>r-a(\mathbf{z})_r$ for each $1 \le k \le k_0-1$.
Hence using the induction hypothesis for depth $r-1$ and also for depth $r$ (with $k <k_0$), we get a 
polydisc $D_2$ around ${\bf a}$ and a polynomial $P_1(s_1,\ldots,s_r)$ such that
the sequences of holomorphic functions on $D_2$,
$$
(P_1(s_1,\ldots,s_r)\mathrm{Li}^{\star}_{(z_{[r-1,r]},z_{r-2},\ldots,z_1)}(s_r+s_{r-1}-1,s_{r-2},\ldots,s_1)_{\geq N})_{N \ge 2}
$$
and 
$$
(P_1(s_1,\ldots,s_r)\mathrm{Li}^{\star}_{(z_r, z_{r-1},\ldots,z_1)}(s_r+k,s_{r-1}, \ldots,s_1)_{\geq N})_{N \ge 2}
$$
converge normally to $0$ on $D_2$, as $N \to \infty$, for each $1 \le k \le k_0-1$.
Now take $P(s_1,\ldots,s_r)=(s_r-1) P_1(s_1,\ldots,s_r)$. 
This is the desired polynomial to conclude the lemma in this case, by taking $D=D_1 \cap D_2$.

Next let $z_r\neq 1$. In that case, we consider the sequence of meromorphic functions $(v_N)_{N\geq 2}$
as in \eqref{medstep-1}, by replacing the variable $s_r$ by $s_r+1$
and the sum over $k$ runs for $0 \le k \le k_0-2$. The proof works out identically as before, and hence we omit the details.
This completes the proof of \lemref{boundary-1}.

\subsection{Proof of \lemref{boundary-2}}
Note that if $t_r=0$, i.e., $z_1=\cdots=z_r=1$, the lemma follows from \cite[Lemma 3]{BS2}.
Hence we assume that $t_r \ge 1$.
Now assume that $r=1$. Then $z_1\neq 1$, $a_1=0$ and $k_0=2$. So using the translation formula 
\eqref{eq1} and \rmkref{series-tail}, we have a disc $D_1$ around $a_1$ such that the sequence of 
functions $(v_N)_{N\geq 2}$, given by
$$
v_N:=\frac{z_1^{N-1}}{N^{s_1}}+\left(1-\frac{1}{z_1}\right)\mathrm{Li}^{\star}_{(z_1)}(s_1)_{\geq N}-s_1\mathrm{Li}^{\star}_{(z_1)}(s_1+1)_{\geq N},
$$
is holomorphic on $D_1$ and converges normally to $0$ on $D_1$, as $N \to\infty$. Since $a_1+1>0$, 
applying \lemref{boundary-1} to the function $\mathrm{Li}^{\star}_{(z_1)}(s_1+1)_{\geq N}$, 
we get a disc $D_2$ around $a_1$ and a polynomial $P(s_1)$ such that $P(s_1)\mathrm{Li}^{\star}_{(z_1)}(s_1)_{\geq N}$ is 
holomorphic on $D_2$ and $\left\|P(s_1)\mathrm{Li}^{\star}_{(z_1)}(s_1+1)_{\geq N}\right\|_{D_2}\rightarrow 0$, as $N \to \infty$.
Hence
$$
\left\|P(s_1)\mathrm{Li}^{\star}_{(z_1)}(s_1)_{\geq N}-\frac{P(s_1)z_1^N N^{-s_1}}{(1-z_1)}\right\|_D\rightarrow 0,
$$
as $N \to \infty$, where $D=D_1 \cap D_2$.

Next let $r\geq 2$. We first assume that $z_r=1$. Now similar to the proof of \lemref{boundary-1},
there exists a polydisc $D_1$ around ${\bf a}$ and a non-negative integer $k_0$, least with $(a_r+k_0,a_{r-1},\ldots,a_1)\in U_r$, 
such that the sequence of holomorphic function $(v_N)_{N\geq 2}$ on $D_1$, defined in \eqref{medstep-1},
converges normally to 0 on $D_1$. Since for each $1 \le k \le k_0-1$, we have $a_1+\cdots+a_r+k=r-1+k>r-1$, 
therefore applying \lemref{boundary-1}, we get a polydisc $D_2$ around ${\bf a}$ and a polynomial $P_1(s_1,\ldots,s_r)$
such that the sequence of holomorphic functions on $D_2$,
$$
(P_1(s_1,\ldots,s_r)\mathrm{Li}^{\star}_{(z_r, z_{r-1},\ldots,z_1)}(s_r+k, s_{r-1}, \ldots,s_1)_{\geq N})_{N \ge 2}
$$
converges normally to $0$ on $D_2$, as $N \to\infty$.
Here $a({\bf z})_r=a(z_1,\ldots,z_{r-2},z_{[r-1,r]})_{r-1}=1$, as $z_{[r-1,r]}=z_{r-1}$. 
Therefore, using the induction hypothesis for the function (of depth $r-1$) $\mathrm{Li}^{\star}_{(z_{[r-1,r]},z_{r-2},\ldots,z_1)}(s_r+s_{r-1}-1,s_{r-2},\ldots,s_1)_{\geq N}$, we get a polydisc $D_3$ around ${\bf a}$ and
a polynomial $P_2(s_1,\ldots,s_r)$ such that the sequence of functions $(w_N)_{N \ge 2}$, given by
 \begin{align*}
w_N:=& P_2(s_1,\ldots,s_r)\mathrm{Li}^{\star}_{(z_{[r-1,r]},z_{r-2},\ldots,z_1)}(s_r+s_{r-1}-1,s_{r-2},\ldots,s_1)_{\geq N}\\
&-\frac{P_2(s_1,\ldots,s_r)z_{[1,r]}^NN^{-(s_1+\cdots+s_r+1-r)}}{\prod_{1\leq j\leq r-1} H_{j}^{({r-1})}(s_j, \ldots,s_{r-2},s_r+s_{r-1}-1;z_{[t_{r-1},r-1]})},
\end{align*}
is holomorphic on $D_3$ and converges normally to $0$ on $D_3$, as $N \to\infty$.
If we take $P_3(s_1,\ldots,s_r) = P_1(s_1,\ldots,s_r) P_2(s_1,\ldots,s_r)$,
then we get that $P_3(s_1,\ldots,s_r)(s_r-1)\mathrm{Li}^{\star}_{(z_r,\ldots,z_1)}(s_r,\ldots,s_1)_{\geq N}$ is 
a holomorphic function on $D=D_1 \cap D_2 \cap D_3$ and as $N \to \infty$, the sequence of functions
$(W_N)_{N \ge 2}$, given by 
\begin{align*}
W_N:=&P_3(s_1,\ldots,s_r)(s_r-1)\mathrm{Li}^{\star}_{(z_r,\ldots,z_1)}(s_r,\ldots,s_1)_{\geq N}\\
&-\frac{P_3(s_1,\ldots,s_r) (s_r-1) z_{[1,r]}^NN^{-(s_1+\cdots+s_r+1-r)}}{(s_r-1)\prod_{1\leq j\leq r-1} H_{j}^{({r-1})}(s_j, \ldots,s_{r-2},s_r+s_{r-1}-1;z_{[t_{r-1},r-1]})},
\end{align*}
is holomorphic on $D$ and converges normally to $0$ on $D$, as $N \to \infty$.
Since $z_{[r-1,r]}=z_{r-1}$, we have $t_{r-1}=t_{r}$ and $z_{[t_{r-1},r-1]}=z_{[t_{r},r]}$. 
Hence it also follows that
$$
H_{j}^{({r-1})}(s_j, \ldots,s_{r-2},s_r+s_{r-1}-1;z_{[t_{r-1},r-1]})=H_{j}^{({r})}(s_j,\ldots,s_r;z_{[t_{r},r]}),
$$
for all $1\leq j\leq r-1$.
Now, by definition, $H_{r}^{(r)}(s_r;z_{[t_r,r]})=s_r-1$, since $t_r\neq r$ as $z_r=1$.
Hence taking $P(s_1,\ldots,s_r)=(s_r-1)P_3(s_1,\ldots,s_r)$, we get the required result in this case.

Now assume that $z_r\neq 1$. Again, as in the proof of \lemref{boundary-1}, 
we consider the sequence of meromorphic function $(v_N)_{N\geq 2}$, as in \eqref{medstep-1}, but
replacing the variable $s_r$ by $s_r+1$ and the sum over $k$ runs for $0 \le k \le k_0-2$.
Thus the steps in the previous case till obtaining the polydisc $D_2$ and  polynomial $P_1$,
work analogously. Now note that in this case, since $z_r\neq 1$, we have $t_r=r$ and hence
$z_1=\cdots=z_{r-2}=1$ and $z_{r-1}z_r=1$. Note that here
$\mathrm{Li}^{\star}_{(z_{[r-1,r]},z_{r-2},\ldots,z_1)}(s_r+s_{r-1},s_{r-2},\ldots,s_1)_{\geq N}
=\zeta^{\star}(s_r+s_{r-1},s_{r-2},\ldots,s_1)$. Now by the induction hypothesis
(or \cite[Lemma 3]{BS2}) we get a polydisc $D_3$ around ${\bf a}$ and
a polynomial $P_2(s_1,\ldots,s_r)$ such that the sequence of functions $(w_N)_{N \ge 2}$, given by
\begin{align*}
w_N:=P_2(s_1,\ldots,s_r)\zeta^{\star}(s_r+s_{r-1},s_{r-2},\ldots,s_1)-
\frac{P_2(s_1,\ldots,s_r) N^{-(s_1+\cdots+s_r+1-r)}}{\prod_{1\leq j\leq r-1}(s_j+\cdots+s_r-(r-j))},
\end{align*}
is holomorphic on $D_3$ and converges normally to $0$ on $D_3$, as $N \to\infty$.
So for $D=D_1 \cap D_2 \cap D_3$ and
$P(s_1,\ldots,s_r)=P_1(s_1,\ldots,s_r)P_2(s_1,\ldots,s_r)$, in view of the above analysis and
\eqref{medstep-1} (with $s_r$ replaced by $s_r+1$), we get that
$P(s_1,\ldots,s_r)\mathrm{Li}^{\star}_{(z_r,\ldots,z_1)}(s_r,\ldots,s_1)_{\geq N}$ 
is holomorphic on $D$ and
\begin{align*}
\left\|P(s_1,\ldots,s_r)\mathrm{Li}^{\star}_{(z_r,\ldots,z_1)}(s_r,\ldots,s_1)_{\geq N}
-\frac{P(s_1,\ldots,s_r)z_{[1,r]}^NN^{-(s_1+\cdots+s_r+1-r)}}
{(1-z_r)\prod_{1\leq j\leq r-1}(s_j+\cdots+s_r-(r-j))}\right\|_D\rightarrow 0,
\end{align*}
as $N \to \infty$, since $z_{[1,r]}=1$ here. This completes the proof of \lemref{boundary-2}.

\subsection{Proof of \lemref{boundary-3}}
Let $i_1,i_2$ be two integers such that $1\leq i_1<i_2\leq r$ and $z_{[i_1,r]},z_{[i_2,r]}\neq 1$. 
By definition $a(\mathbf{z})_r=1$, hence by the hypothesis of lemma, $a_1+\cdots+a_r=r-1$. 
We prove this lemma by induction on $r$.

 Assume that $r=2$. Then $z_{[1,2]}\neq 1$ and $z_2\neq 1$. We use the following version of \eqref{eq2}:
 $$
 \left(1-\frac{1}{z_2}\right)\mathrm{Li}^{\star}_{(z_2,z_1)}(s_2,s_{1})_{\geq N}+
\frac{1}{z_2}\mathrm{Li}^{\star}_{(z_{[1,2]})}(s_1+s_{2})_{\geq N}
=\sum_{k\geq0}(-1)^k\frac{(s_2)_{k+1}}{(k+1)!}\mathrm{Li}^{\star}_{(z_2,z_1)}(s_2+k+1,s_1)_{\geq N}.
 $$
 Applying steps similar to those in the proofs of \lemref{boundary-1} and 
 \lemref{boundary-2}, we get a polydisc $D_1$ and a polynomial $P_1(s_1,s_2)$
around $(a_1,a_2)$ such that the sequence of functions $(v_N)_{N \ge 2}$, given by
$$
v_N:= P_1(s_1,s_2)\left(1-\frac{1}{z_2}\right)\mathrm{Li}^{\star}_{(z_2,z_1)}(s_2,s_{1})_{\geq N}+
\frac{P_1(s_1,s_2)}{z_2}\mathrm{Li}^{\star}_{(z_{[1,2]})}(s_1+s_{2})_{\geq N},
$$
is holomorphic in $D_1$ and converges normally to $0$ on $D_1$, as $N \to \infty$.
Note that  as  $z_{[1,2]}\neq 1$ and $a_1+a_2=1>0$, by applying \lemref{boundary-1} to the function
$\mathrm{Li}^{\star}_{(z_{[1,2]})}(s_2+s_1)_{\geq N}$, we get a polydisc $D_2$ and a polynomial $P_2(s_1,s_2)$
such that $P_2(s_1,s_2)\mathrm{Li}^{\star}_{(z_{[1,2]})}(s_2+s_1)_{\geq N}$ is holomorphic on $D_2$ and as $N \to \infty$,
 $$
 \left\|P_2(s_1,s_2)\mathrm{Li}^{\star}_{(z_{[1,2]})}(s_2+s_1)_{\geq N}\right\|_{D_2}\rightarrow 0.
 $$
 This completes the proof for $r=2$, by taking $D=D_1 \cap D_2$ and $P(s_1,s_2)=P_1(s_1,s_2)P_2(s_1,s_2)$.
 
Now assume that $r\geq 3$. Again, we break the proof into two cases: $z_r=1$ and $z_r\neq 1$. 
The proofs of Lemma \ref{boundary-1}, \ref{boundary-2} and the $r=2$ case discussed above indicate that the proof
reduces to finding a polydisc $D$ around ${\bf a}$ and a polynomial $P(s_1,\ldots,s_r)$ such that as $N \to \infty$,
$$
 \left\|P(s_1,\ldots,s_r)\mathrm{Li}^{\star}_{(z_{[r-1,r]},z_{r-2},\ldots,z_1)}(s_r+s_{r-1}-1,s_{r-2},\ldots,s_1)_{\geq N},
 \right\|_D\rightarrow 0
$$
or,
$$
 \left\|P(s_1,\ldots,s_r)\mathrm{Li}^{\star}_{(z_{[r-1,r]},z_{r-2},\ldots,z_1)}(s_r+s_{r-1},s_{r-2},\ldots,s_1)_{\geq N}
 \right\|_D\rightarrow 0,
$$
depending on whether $z_r=1$ or not.
 
First suppose $z_r=1$. Note that $a_1+\cdots+a_r-1=(r-1)-1$.
Since $z_r=1$, we have at least two integers $i_1, i_2$ such that $z_{[i_1,r-1]},z_{[i_2,r-1]}\neq 1$. 
Hence this allows us to use the induction hypothesis on depth $r-1$, to get the desired conclusion in this case.
 
 Next assume that $z_r\neq 1$. By the hypothesis of the lemma, 
 there exists an integer $i$ such that $1\leq i \leq r-1$ and $z_{[i,r]}\neq 1$. Therefore,
 we have $a(z_1,\ldots,z_{r-2},z_{[r-1,r]})_{r-1}=1$. 
 Hence as $a_1+\cdots+a_r=r-1>(r-1)-1$, we can apply \lemref{boundary-1} to get the desired conclusion in this case.
 This completes the proof of \lemref{boundary-3}.

\subsection{Proof of \lemref{Taylor-lem}}
For every integer $N\geq 1$, let the Taylor series expansion of
$u_N(s_1,\ldots,s_r)$ around ${\bf a}$ be written as
$$
\sum_{k_1,\ldots,k_r\geq 0}\alpha_{(k_1,\ldots,k_r)}(N)(s_1-a_1)^{k_1}\cdots(s_r-a_r)^{k_r}.
$$ 
Then for every ${\bf k}=(k_1,\ldots,k_r) \in \N^r$, $\alpha_{\bf k}(N)=P_{\bf k}(N,\log N)+o(1)$, as $N\rightarrow\infty$. 
Let the Taylor series expansion of the function $u(s_1,\ldots,s_r)$ around ${\bf a}$ be written as
$$
\sum_{k_1,\ldots,k_r\geq 0}\beta_{(k_1,\ldots,k_r)}(s_1-a_1)^{k_1}\cdots(s_r-a_r)^{k_r}.
$$
Since $u_N(s_1,\ldots,s_r)$ converges uniformly to the holomorphic function $u(s_1,\ldots,s_r)$ on $D$,
we get that $P_{\bf k}(N,\log N)+o(1)$ converges to $\beta_{\bf k}$ as $N\rightarrow\infty$. 
Now given that $P_{\bf k}(N,\log N)=\sum_{i,j \ge 0}a_{\bf k}^{(i,j)}(N)N^{i}(\log N)^j$, 
the sequence $P_{\bf k}(N,\log N)$ is convergent if and only if we have $a_{\bf k}^{(i,j)}(N)=0$ for all 
pair $(i,j)\neq (0,0)$ and $(a_{\bf k}^{(0,0)}(N))_{N\geq 1}$ is an eventually constant sequence of
$\mathcal{C}_{\bf z}$. Assume that for $N$ large enough, $a_{\bf k}^{(0,0)}(N)=a_{\bf k}$.
Then $a_{\bf k}$ is same as  $\beta_{\bf k}$, which is exactly the ${\bf k}$-th
coefficient in the Taylor series expansion around ${\bf a}$
of the function $u(s_1,\ldots,s_r)$. This completes the proof of \lemref{Taylor-lem}.

\section{Asymptotic expansions of sequences of germs of holomorphic
and meromorphic functions with variable coefficients}\label{sec-asymp}

Let $\mathcal{E}$ be the comparison scale,
on the set $\N$, filtered by the Fr\'echet filter and formed by the sequences 
$$
\left( (\log n)^ln^{-m} \right)_{n \geq 1},
$$
where $l \in \N$ and $m \in \Z$. Also, let $r \ge 1$ be an integer and
let ${\bf z}=(z_1,\ldots,z_r) \in \C^r$ be such that $z_1,\ldots,z_r$ are fixed roots of unity.
As before, let $\mathcal{C}_{\bf z}$ be the $\C$-algebra
generated by the constant sequence $(1)_{n\geq 1}$ and the sequences of the form $(z_i^n)_{n\geq 1}$ for $1 \le i \le r$.
Note that as a $\C$-vector space, ${\mathcal C}_{\bf z}$ is finite dimensional since the set
$$
\mathcal S:=\{z_1^{k_1} \cdots z_r^{k_r}: k_i \in \N \text{ for } 1 \le i \le r\}
$$
is finite (see \cite[\S 2]{PSBS}). For every element $\xi \in \mathcal S$, there are infinitely many $(k_1, \ldots, k_r) \in \N^r$
such that $\xi=z_1^{k_1} \cdots z_r^{k_r}$. By the exponent of $\xi$, we mean the smallest such
element in $\N^r$, as per the dictionary ordering. Let $S$ denote the set of exponents of elements of $\mathcal S$.
Then $\mathcal C_{\bf z}$ has a basis of the form
\begin{equation}\label{basis}
\mathcal B_{\bf z}=\{ (z_1^{k_1 n} \cdots z_r^{k_r n})_{n \ge 1} : (k_1, \ldots, k_r) \in S\}.
\end{equation}
As $(1)_{n \ge 1} \in \mathcal B_{\bf z}$, we have $(0, \ldots,0) \in S$. It is immediate that $\mathcal B_{\bf z}$
is a spanning set for $\mathcal C_{\bf z}$. Moreover,
as we are only taking $(k_1, \ldots, k_r) \in S$, we get the linear independence
using a Vandermonde determinant trick.

\subsection{Asymptotic expansions of sequences of germs of holomorphic functions with variable coefficients}\label{holo-asym}

Majority of the notations used here are taken from \cite[\S 7.1]{BS2}, where the notion of asymptotic expansions of sequences
of germs of holomorphic functions is explained. We extend that to the notion of asymptotic expansions of sequences
of germs of holomorphic functions with variable coefficients as follows:

For fixed $\mathbf{a}=(a_1,\ldots,a_r) \in \C^r$, let $\mathcal{O}_{\mathbf{a}}$ denote the
$\C$-algebra of germs of holomorphic functions at  $\mathbf{a}$. For simplicity, we use the same
notation for the function and its germ at ${\bf a}$.
For $A\in \Z$, we say that a sequence $(f_n)_{n\geq1}$ of germs of holomorphic functions at $\mathbf{a}$ has an 
{\it asymptotic expansion to precision $n^{-A}$} relative to $\mathcal{E}$ with coefficients in $\mathcal{C}_{\bf z}$,  
if the following conditions holds:
\begin{enumerate}
\item[(a)] Suppose that in a neighbourhood of ${\bf a}$, the Taylor series expansion of $f_n$ is written as
$$
f_n(s_1,\ldots,s_r)=\sum_{k_1,\ldots,k_r \ge 0} c_{(k_1,\ldots,k_r)}(f_n) (s_1-a_1)^{k_1} \cdots (s_r-a_r)^{k_r}.
$$
Then for each $\mathbf{k}=(k_1,\ldots,k_r)$, there exists a family of sequence
$((u_{(\mathbf{k},l,m)}(n))_{n \ge 1})_{l\in \N, m\in \Z}$ in $\mathcal{C}_{\bf z}$ 
such that the sequence $(c_{\mathbf{k}}(f_n))_{n\geq 1}$ has the asymptotic expansion to precision $n^{-A}$ relative to
$\mathcal{E}$ with coefficients in $\mathcal{C}_{\bf z}$ (in the sense of \cite [\S 2]{PSBS}) of the form
$$
c_{\mathbf{k}}(f_n)=\sum_{\substack{l\in \N, m\in \Z\\ m\leq A}}u_{(\mathbf{k},l,m)}(n)(\log n)^{l}n^{-m}+O(n^{-A}),
$$
as $n \to \infty$, where the sequence $(u_{(\mathbf{k},l,m)}(n))_{n\geq 1}$ is the zero sequence for all but finitely many pairs $(l,m)$;
\item[(b)] there exists $m_0\in \Z$ such that $u_{(\mathbf{k},l,m)}(n)=0$ for all $n\geq 1$, $\mathbf{k}\in \N^r$, $l\in\N $ and $m\leq m_0$;
\item[(c)] for each $(l, m)\in \N\times\Z$ and $\mathbf{k}\in \N^r$, if $u_{(\mathbf{k},l,m)}$ denotes the coordinate corresponding to the
 constant sequence $(1)_{n\geq 1}$ with respect to the basis $\mathcal{B}_{\bf z}$ for the sequence
 $(u_{(\mathbf{k},l,m)}(n))_{n\geq 1}$,
 then the power series
\begin{align}
g_{(l,m)}:=\sum_{k_1,\ldots,k_r \ge 0}u_{(\mathbf{k},l,m)}(s_1-a_1)^{k_1} \cdots (s_r-a_r)^{k_r},
\end{align}
converges in a neighbourhood of $\mathbf{a}$.
\end{enumerate}

When these conditions hold, then the formal series
$$
\sum_{\substack{l\in \N, m\in \Z \\ m\leq A }}g_{(l,m)}L^lX^m,
$$
is called the {\it formal asymptotic expansion to precision $n^{-A}$} (at ${\bf a}$) relative to $\mathcal{E}$ with coefficients in 
$\mathcal{C}_{\bf z}$ of the sequence $(f_n)_{n\geq 1}$.

\begin{ex}\label{zero-asymp}\rm
Let $(f_n)_{n\geq 1}\subseteq \mathcal{O}_{\mathbf{a}}$ be such that there exists an 
open neighbourhood $D$ of $\mathbf{a}$ such that $\| f_n\|_D=o(n^{-A})$ 
for some integer $A$, as $n \to \infty$. Then $(f_n)_{n\geq 1}$
has an asymptotic expansion to precision $n^{-A}$ relative to $\mathcal{E}$ with coefficients in 
$\mathcal{C}_{\bf z}$ and its formal asymptotic expansion to precision $n^{-A}$ 
relative to $\mathcal{E}$ with coefficients in 
$\mathcal{C}_{\bf z}$ is the Laurent polynomial $0$.
\end{ex}

We say that a sequence $(f_n)_{n\geq 1}$ of elements of $\mathcal{O}_{\mathbf{a}}$ has a 
{\it complete asymptotic expansion} (at ${\bf a}$) relative to $\mathcal{E}$ with 
coefficients in $\mathcal{C}_{\bf z}$ if it has an asymptotic expansion to precision $n^{-A}$
relative to $\mathcal{E}$ with coefficients in 
$\mathcal{C}_{\bf z}$, for all $A\in \Z$. 
In this case, there exists a unique Laurent series
$$
G=\sum_{(l,m)\in \N\times\Z}g_{(l,m)}L^lX^m,
$$ 
in the indeterminate $X$ with coefficients in the formal power series ring $\mathcal{O}_{\mathbf{a}}[[L]]$ such that the 
Laurent polynomial obtained by truncating $G$ to degree $\leq A$ in $X$ is the formal asymptotic expansion of 
$(f_n)_{n\geq 1}$ to precision $n^{-A}$ relative to $\mathcal{E}$ with coefficients in $\mathcal{C}_{\bf z}$.
We call $G$ to be the {\it formal complete asymptotic expansion} (at ${\bf a}$) of
$(f_n)_{n\geq 1}$ (relative to $\mathcal{E}$ with coefficients in $\mathcal{C}_{\bf z}$).

\begin{ex}\label{g-r=1}\rm
Let $a\in \Z$. For each integer $n\geq1$, consider the germs of holomorphic functions $f_n$ defined by 
$s \mapsto z^n n^ {1-s}$, where $z$ is a root of unity. So around $a$, $f_n$ is defined by the power series,
$$
f_n(s)=\sum_{k\geq0}\frac{(-1)^k(\log n)^k z^n n^{1-a}}{k!}(s-a)^k.
$$
When $z=1$, we get, $g_{(l,a-1)}=\frac{(-1)^l}{l!}(s-a)^{l}$ and $g_{(l,m)}=0$ for all $m\neq a-1$. But
when $z\neq 1$, we get $g_{(l,m)}=0$ for all $l\in \N, m\in \Z$. Therefore, in this case, the formal 
complete asymptotic expansion of $(f_n)_{n\geq 1}$ is zero.
\end{ex}

To conclude this segment we need to also define the notion of asymptotic expansions of sequences
of germs of meromorphic functions with variable coefficients. For that we need the following lemma,
which is an extension of \cite[lemma 7]{BS2}. Since the proof follows analogously, we omit the proof.

\begin{lem}\label{lem-den}
Let $(f_n)_{n\geq1}$ be a sequence in $\mathcal{O}_{\mathbf{a}}$ and $f$ be a 
non-zero element in $\mathcal{O}_{\mathbf{a}}$. If the sequence $(ff_n)_{n\geq1}$ has an 
asymptotic expansion to precision $n^{-A}$, for a given integer $A$ (resp. a complete asymptotic expansion) 
relative to $\mathcal{E}$ with coefficients in $\mathcal{C}_{\bf z}$, then the same 
holds for the sequence $(f_n)_{n\geq 1}$.
\end{lem}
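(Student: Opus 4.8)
The plan is to reduce the statement to the one-variable (or rather, coefficient-wise) version of the corresponding result that already appears in \cite{BS2}, namely \cite[Lemma 7]{BS2}, by unwinding the definition of asymptotic expansion with variable coefficients given above. Recall that a sequence $(g_n)_{n\geq 1}$ in $\mathcal{O}_{\mathbf{a}}$ has an asymptotic expansion to precision $n^{-A}$ relative to $\mathcal{E}$ with coefficients in $\mathcal{C}_{\bf z}$ precisely when, after expanding each $g_n$ as a Taylor series around $\mathbf{a}$, every Taylor coefficient sequence $(c_{\mathbf{k}}(g_n))_{n \ge 1}$ admits such an expansion (in the scalar sense of \cite[\S 2]{PSBS}), subject to the uniformity conditions (b) and (c). So the task is genuinely coefficient-by-coefficient, together with a check that the two bookkeeping conditions survive the division by $f$.

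First I would fix the polydisc: since $f \in \mathcal{O}_{\mathbf{a}}$ is non-zero, by the Weierstrass preparation / division setup (exactly as in the proof of \cite[Lemma 7]{BS2}) we may shrink to a common polydisc $D$ around $\mathbf{a}$ on which $f$ is holomorphic, $1/f$ is meromorphic with controlled pole locus, and — more to the point — the Taylor coefficients $c_{\mathbf{k}}(f_n)$ of $f_n = (1/f)\cdot(ff_n)$ around $\mathbf{a}$ can be expressed as \emph{finite} $\mathcal{O}_{\mathbf{a}}$-linear (indeed $\C$-linear, once $f$ is fixed) combinations of the coefficients $c_{\mathbf{j}}(ff_n)$ with $\mathbf{j} \le \mathbf{k}$ coordinatewise. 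This is just the formula for the Taylor coefficients of a product, solved triangularly for the coefficients of $f_n$ using that $c_{\mathbf{0}}(f) = f(\mathbf{a}) \neq 0$ (after a preliminary coordinate shift if $f(\mathbf{a})=0$, handled exactly as in \cite{BS2}).

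Next, I would invoke the scalar statement: each $(c_{\mathbf{j}}(ff_n))_{n\geq 1}$ has, by hypothesis, an asymptotic expansion to precision $n^{-A}$ relative to $\mathcal{E}$ with coefficients in $\mathcal{C}_{\bf z}$; since $\mathcal{C}_{\bf z}$ is a ring and such expansions form an $\mathcal{C}_{\bf z}$-module stable under the (finitely many, fixed-coefficient) linear combinations appearing in the previous step, $(c_{\mathbf{k}}(f_n))_{n\geq 1}$ also has one. That gives condition (a) for $(f_n)$. For condition (b), the integer $m_0$ transfers because only finitely many coefficients of $f$ near $\mathbf{a}$ are involved and each shifts the $X$-degree by a bounded amount, so a single $m_0'$ works uniformly in $\mathbf{k}$. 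For condition (c), the key point is that extracting the $(1)_{n\geq 1}$-coordinate with respect to the basis $\mathcal{B}_{\bf z}$ is a $\C$-linear projection of $\mathcal{C}_{\bf z}$, hence commutes with the finite linear combinations above; therefore the power series $g_{(l,m)}$ for $(f_n)$ is obtained from those of $(ff_n)$ by dividing by $f$ in $\mathcal{O}_{\mathbf{a}}[[L]]((X))$ — concretely, the generating series $\sum g_{(l,m)}L^lX^m$ for $(f_n)$ equals $f^{-1}$ times that of $(ff_n)$ — and hence converges in a (possibly smaller) neighbourhood of $\mathbf{a}$. The complete-expansion case follows by doing this for every $A \in \Z$ and noting the $m_0'$ above is independent of $A$.

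I expect the main obstacle to be the bookkeeping in condition (c): one must be careful that the projection onto the $(1)_{n\ge 1}$-component is compatible with multiplication by elements of $\mathcal{C}_{\bf z}$ in the way needed — it is \emph{not} a ring homomorphism, so one cannot naively say ``the $(1)$-part of a product is the product of the $(1)$-parts''. What saves the argument is that in the triangular solve for $c_{\mathbf{k}}(f_n)$ the multipliers are the \emph{fixed complex numbers} $c_{\mathbf{j}}(f)$ (the Taylor coefficients of the fixed germ $f$), not variable elements of $\mathcal{C}_{\bf z}$, so one is only using $\C$-linearity of the projection, which is unproblematic. Since all of this mirrors \cite[Lemma 7]{BS2} verbatim except for carrying the extra index $\mathbf{k}$ and the extra structure of $\mathcal{C}_{\bf z}$ along for the ride, the proof is omitted, as stated.
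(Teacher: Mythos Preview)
The paper itself omits the proof entirely, stating only that it ``follows analogously'' to \cite[Lemma 7]{BS2}; your proposal is in the same spirit and correctly isolates the one genuinely new point, namely that condition~(c) transfers because the multipliers arising in the triangular solve are the fixed complex numbers $c_{\mathbf{j}}(f)$, so only $\C$-linearity (not multiplicativity) of the projection onto the $(1)_{n\ge 1}$-coordinate of $\mathcal{C}_{\bf z}$ is required. One small inaccuracy: when $f(\mathbf{a})=0$ the coefficients $c_{\mathbf{k}}(f_n)$ are finite $\C$-linear combinations of the $c_{\mathbf{j}}(ff_n)$, but not with $\mathbf{j}\le\mathbf{k}$ coordinatewise (e.g.\ for $r=1$, $f(s)=s$ one has $c_k(f_n)=c_{k+1}(ff_n)$); the finiteness, which is all that matters for (a) and (b), comes from the Weierstrass-preparation reduction you allude to, exactly as in \cite{BS2}.
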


\subsubsection{Asymptotic expansions of sequences of germs of meromorphic functions with variable coefficients}\label{asymp-mero}

Let $\mathcal{M}_{\mathbf{a}}$ denote the $\C$-algebra of germs of meromorphic 
functions at $\mathbf{a}$.  For an integer $A\in \Z$, we say that a sequence $(f_n)_{n\geq 1}$ 
of elements of $\mathcal{M}_{\mathbf{a}}$ has an {\it asymptotic expansion to precision $n^{-A}$}
(resp. a {\it complete asymptotic expansion})  
relative to  $\mathcal{E}$ with coefficients in $\mathcal{C}_{\bf z}$ if there exists a common denominator $f$ of $f_n$ 
(i.e., a non zero element $f\in \mathcal{O}_{\mathbf{a}}$ such that $ff_n\in \mathcal{O}_{\mathbf{a}}$)
such that the sequence $(ff_n)_{n\geq 1}$ 
has an {asymptotic expansion to precision $n^{-A}$} (resp. a {complete asymptotic expansion})  relative to 
$\mathcal{E}$ with coefficients in $\mathcal{C}_{\bf z}$. 
This definition is independent of the common denominator $f$ by \lemref{lem-den}.
In this case, the formal complete asymptotic expansion of the sequence $(f_n)_{n\geq 1}$  is given by 
\begin{equation}\label{fcae-1}
\sum_{l\in \N, m\in \Z }f^{-1}g_{(l,m)}L^lX^m,
\end{equation}
in the indeterminate $X$ with coefficients in $\mathcal{M}_{\mathbf{a}}[[L]]$, where
$$
\sum_{l\in \N, m\in \Z }g_{(l,m)}L^lX^m,
$$
is the complete asymptotic expansion of $(ff_n)_{n\geq 1}$. This element of the ring $\mathcal{M}_{\mathbf{a}}[[L]]((X))$ 
is then called the {\it formal complete asymptotic expansion} of the sequence of germs of the meromorphic functions 
$(f_n)_{n\geq 1}$ (at ${\bf a}$) relative to $\mathcal{E}$  with  coefficients in $\mathcal{C}_{\bf z}$. Clearly,
restricting $m \le A$ in \eqref{fcae-1}, we get the formal asymptotic expansion to precision $n^{-A}$
of the sequence $(f_n)_{n\geq 1}$.

\begin{rmk}\label{change-variable}{}\rm
Similar to \cite[Remark 16]{BS2}, let $\mathbf{a}\in \C^r$ and $\pi$ be the germ at $\mathbf{a}$ of a 
holomorphic map defined on a neighbourhood of $\mathbf{a}$ with values in $\C^p$ for some $p\geq 0$. 
Denote $\mathbf{b}=\pi(\mathbf{a})$. Let $(f_n)_{n\geq 1}$ be a sequence of elements of 
$\mathcal{M}_{\C^p,\mathbf{b}}$ which has an asymptotic expansion to precision 
$n^{-A}$ (relative to $\mathcal{E}$ with coefficients in $\mathcal{C}_{\bf z}$), with $A\in \Z$. Then 
the sequence $(f_n \circ \pi)_{n\geq 1}$ of elements of $\mathcal{M}_{\C^r,\mathbf{a}}$ also has an 
asymptotic expansion to precision $n^{-A}$ (relative to $\mathcal{E}$ with coefficients in $\mathcal{C}_{\bf z}$).
 Moreover, if $\sum_{\substack{(l,m)\in \N\times\Z\\ m \le A}}g_{(l,m)}L^lX^m$ is the formal asymptotic expansion of 
 $(f_n)_{n\geq1}$ to precision $n^{-A}$, then $\sum_{\substack{(l,m)\in \N\times\Z\\ m\leq A}}(g_{(l,m)}\circ \pi)L^lX^m$ 
 is that of $(f_n\circ \pi)_{n\geq1}$.
\end{rmk}

\subsection{Asymptotic expansion of  $\mathrm{Li}^{\star}_{(z)}(s)_{\geq N}$}
Let $z$ be a root of unity.
The formal asymptotic expansion of the sequence (of germs) of functions $(\mathrm{Li}^{\star}_{(z)}(s)_{\geq N})_{N \ge 2}$
relative to $\mathcal{E}$ with variable coefficients in $\mathcal{C}_{(z)}$, depends on $z$.
If $z=1$, then  $\mathrm{Li}^{\star}_{(z)}(s)_{\geq N}=\zeta^{\star}(s)_{\geq N}$. 
Hence, in this case the formal complete asymptotic expansion (at an integer $a$) is given in \cite[Remark 17]{BS2},
namely, it is the formal Laurent series
$$
\sum_{k \ge 0} \sum_{l \ge 0} h^\star_{(l,k)}L^l X^{a+k-1},
$$
where $h^\star_{(l,k)}(s)=\frac{(-1)^l B_k^\star}{l! k!}(s)_{k-1}(s-a)^l$ and
$B_k^\star$ denotes the star Bernoulli numbers, defined by
$$
\frac{x}{e^x-1}=\sum_{k \ge 0} \frac{(-1)^k B_k^\star}{k!}  x^k.
$$
So, we are now interested in the case when $z\neq 1$ and we prove the following proposition.

\begin{prop}\label{*-mp-asym-r=1}
Let $a$ be an integer and $z\neq 1$ be a root of unity. Then the sequence (of germs at $a$) of the meromorphic functions 
$(\mathrm{Li}^{\star}_{(z)}(s)_{\geq N})_{N\geq 2}$ has a complete asymptotic expansion relative to   
$\mathcal{E}$ with coefficients in $\mathcal{C}_{(z)}$. Moreover, the associated formal complete asymptotic 
expansion is equal to zero.
\end{prop}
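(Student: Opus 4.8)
The plan is to reduce the claim to the translation formula of \propref{trans} for the depth-one case, combined with the two asymptotic facts already in hand: the complete asymptotic expansion of $\zeta^\star(s)_{\ge N}$ at an integer recalled from \cite[Remark 17]{BS2}, and the observation in \exref{g-r=1} that for a root of unity $z \ne 1$ the germs $s \mapsto z^N N^{1-s}$ have formal complete asymptotic expansion zero (with coefficients in $\mathcal{C}_{(z)}$, since $z \ne 1$ kills all the $(1)_{n\ge 1}$-coordinates in the basis $\mathcal{B}_{(z)}$). First I would record \eqref{eq1} with $r=1$ in the equivalent form
\begin{align*}
\left(1-\frac{1}{z}\right)\mathrm{Li}^{\star}_{(z)}(s)_{\geq N}
= -\frac{z^{N-1}}{N^{s}} + \sum_{k\geq 0}(-1)^k\frac{(s)_{k+1}}{(k+1)!}\mathrm{Li}^{\star}_{(z)}(s+k+1)_{\geq N},
\end{align*}
obtained by replacing $s_1$ with $s+1$. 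Since $z \ne 1$, the factor $1 - 1/z$ is a non-zero constant, so by \lemref{lem-den} it suffices to establish the assertion for the left-hand side as written.

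Next I would treat the right-hand side term by term. The term $z^{N-1}N^{-s}/(z-1)$ is, up to a non-zero scalar and a shift, exactly the germ sequence of \exref{g-r=1} with $z \ne 1$, so it has a complete asymptotic expansion relative to $\mathcal{E}$ with coefficients in $\mathcal{C}_{(z)}$, with formal complete asymptotic expansion zero. For the series over $k$, I would argue by induction on the largest shift needed: for $k$ large enough that $s+k+1$ lies in the region of absolute convergence (i.e.\ $\Re(s+k+1) > 1$ near $s=a$), the partial tail $\mathrm{Li}^{\star}_{(z)}(s+k+1)_{\ge N}$ is $o(N^{-\epsilon})$ uniformly on a small disc around $a$ by \lemref{uni-convg} (or directly by the estimate $\sum_{n \ge N} n^{-\sigma} = O(N^{1-\sigma})$), hence by \exref{zero-asymp} it has a complete asymptotic expansion with formal expansion zero; for smaller values of $k$ one again invokes \propref{trans} (shifted) to express $\mathrm{Li}^{\star}_{(z)}(s+k+1)_{\ge N}$ in terms of the same type of data with a strictly larger shift, which by the induction hypothesis has a complete asymptotic expansion with formal expansion zero, together with another $z^{N-1}N^{-(s+k+1)}$-type term handled as above. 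Here I would need that the infinite sum over $k$ can be handled: by \lemref{uni-convg} the tail $\sum_{k \ge k_0}(-1)^k\frac{(s)_{k+1}}{(k+1)!}\mathrm{Li}^{\star}_{(z)}(s+k+1)_{\ge N}$ is itself $o(1)$ on a disc around $a$ as $N \to \infty$, and more precisely $O(N^{-\epsilon})$, so for a given target precision $n^{-A}$ only finitely many $k$ contribute and the rest is absorbed into the error by \exref{zero-asymp}.

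Assembling these pieces: the right-hand side is a finite sum (up to $o(N^{-A})$) of sequences each having a complete asymptotic expansion relative to $\mathcal{E}$ with coefficients in $\mathcal{C}_{(z)}$ and each with formal complete asymptotic expansion zero; since asymptotic expansions are additive, the left-hand side has a complete asymptotic expansion with formal complete asymptotic expansion zero, and then \lemref{lem-den} transfers this to $(\mathrm{Li}^{\star}_{(z)}(s)_{\ge N})_{N \ge 2}$. The main obstacle I anticipate is bookkeeping the induction cleanly: one must be careful that in the inductive step the new shifted terms genuinely have \emph{strictly} larger $k_0$ (so the induction terminates) and that, when recursively unwinding \propref{trans}, one never reintroduces a $\zeta^\star$-type term with $z=1$ — but this cannot happen, since $z \ne 1$ is preserved under the depth-one translation (the subscript stays $z$ throughout), so the only ``constant'' building blocks that appear are the $z^N N^{-s}$-germs with $z \ne 1$, all of which contribute zero to the formal expansion. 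A secondary point to verify carefully is that the precision estimate $O(N^{-\epsilon})$ from \lemref{uni-convg} is uniform on a fixed small polydisc around $a$, which it is because $\epsilon$ depends only on a choice of real point $(\sigma) \in U_1$ dominating the disc.
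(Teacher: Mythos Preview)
Your argument is correct and follows the same inductive skeleton as the paper's proof: fix a precision $A$, truncate the translation formula \eqref{eq1} (shifted by $s_1 \mapsto s+1$) at the point where the tail is $o(N^{-A})$, and handle the finitely many remaining $\mathrm{Li}^{\star}_{(z)}(s+k+1)_{\ge N}$ by induction on the shift (equivalently, on $k_0$). One small point of exposition: the tail estimate you need is $o(N^{-A})$, not merely $O(N^{-\epsilon})$; this does follow once $k_0$ is chosen with $a+k_0 > A+1$, so that $\Re(s+k+1) > A+1$ on a small disc for $k \ge k_0-1$, but your phrasing ``more precisely $O(N^{-\epsilon})$'' undersells what is actually required and available.

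The genuine difference is in how the formal expansion is shown to vanish. You invoke the induction hypothesis directly: each $\mathrm{Li}^{\star}_{(z)}(s+k+1)_{\ge N}$ already has formal expansion zero, the $z^{N-1}N^{-s}$ term has formal expansion zero by \exref{g-r=1}, so the left-hand side does too after dividing by the nonzero scalar $1-\bar z$. The paper instead writes the full system of shifted translation identities as a matrix equation $\mathbf{W}=\mathbf{A}\mathbf{U}$ and explicitly inverts the upper-triangular matrix $\mathbf{A}(s)=\bar z\,\mathbf{I}-\exp(-\mathbf{M}(s))$ using Eulerian polynomials, obtaining the closed form \eqref{A-inverse}, and only then reads off that the first entry of $\mathbf{U}$ has the same formal expansion as a finite combination of $z^{N-1}N^{-s-k}$ terms, hence zero. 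For the present proposition your shortcut is cleaner and entirely sufficient. What it does not give you is the explicit inverse $\mathbf{A}(s)^{-1}$: the paper reuses that very matrix (with $s,z$ replaced by $s_1,z_1$) in the proof of \propref{asymp-mp*} for the case $z_1\neq 1$, where the formal expansion is nonzero and its coefficients must be computed. So the paper's extra work here is an investment paid back in the next proposition, whereas your route would force you to set up and invert that system there from scratch.
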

\begin{proof}
Let $A$ be a positive integer. We prove this by induction on the smallest non-negative integer $k_0$ such that $a+k_0>A+1$.
Let $D$ be a disc around $a$ with small radius $\rho <1$. So if $k_0=0$, we have $a \ge A+2$ and
hence $\|\mathrm{Li}^{\star}_{(z)}(s)_{\geq N}\|_D=o(N^{-A})$, as $N \to \infty$ and we are done by Example \ref{zero-asymp}.
Now from the translation formula \eqref{eq1}, we can write
$$
\frac{z^{N-1}}{N^s}=\left({\bar z}-1\right)\mathrm{Li}^{\star}_{(z)}(s)_{\geq N}+
\sum_{k\geq 0}(-1)^k\frac{(s)_{k+1}}{(k+1)!}\mathrm{Li}^{\star}_{(z)}(s+k+1)_{\geq N}.
$$
For $k_0 \ge 1$, following the proof of \lemref{uni-convg}, $D$ can be so chosen that as $N \to \infty$, we have
$$
\left\|\sum_{k\geq k_0-1}(-1)^k\frac{(s)_{k+1}}{(k+1)!}\mathrm{Li}^{\star}_{(z)}(s+k+1)_{\geq N}\right\|_D=o(N^{-A}).
$$
Hence, by \exref{zero-asymp}, we get that the sequence (of germs) of holomorphic functions at $a$
$$
\left(\left({\bar z}-1\right)\mathrm{Li}^{\star}_{(z)}(s)_{\geq N}+\sum_{k= 0}^{k_0-2}(-1)^k\frac{(s)_{k+1}}{(k+1)!}\mathrm{Li}^{\star}_{(z)}(s+k+1)_{\geq N}\right)_{N\geq 2}
$$
has an asymptotic expansion to precision $N^{-A}$, with the same formal asymptotic expansion 
to precision $N^{-A}$ as that of the sequence (of germs) of $(z^{N-1}N^{-s})_{N\geq 2}$,
which is given in Example \ref{g-r=1}.
By the induction hypothesis, 
for any $0\leq k\leq k_0-2$, the sequence (of germs) of $(\mathrm{Li}^{\star}_{(z)}(s)_{\geq N})_{N\geq 2}$ at $a+k+1$ 
has asymptotic expansion to precision $N^{-A}$. Hence for each $0\leq k\leq k_0-2$, the sequence of germs 
$(\mathrm{Li}^{\star}_{(z)}(s+k+1)_{\geq N})_{N\geq 2}$ at $a$ has asymptotic expansion to precision $N^{-A}$ 
and therefore the sequence (of germs) of $(\mathrm{Li}^{\star}_{(z)}(s)_{\geq N})_{N\geq 2}$ at $a$ has an asymptotic 
expansion to precision $N^{-A}$.

To write down the corresponding formal asymptotic expansion, we again need an inversion process.
From the above arguments we have that for each $0\leq j\leq k_0-1$, 
the sequence (of germs) of meromorphic functions $(z^{N-1}N^{-s-j})_{N\geq 2}$ at $a$ has the same formal
asymptotic expansion to precision $N^{-A}$ as that of the sequence (of germs) of meromorphic functions
$$
\left(\left({\bar z}-1\right)\mathrm{Li}^{\star}_{(z)}(s+j)_{\geq N}+\sum_{k= 0}^{k_0-j-2}(-1)^k\frac{(s+j)_{k+1}}{(k+1)!}\mathrm{Li}^{\star}_{(z)}(s+j+k+1)_{\geq N}\right)_{N\geq 2}
$$
at $a$. Writing this in matrix from, we have
\begin{equation}\label{matrix}
\mathbf{W=AU},
\end{equation}
where $\bf{U}$ and $\bf{W}$ are column vectors, whose entries are the formal asymptotic expansion 
at $a$ to precision $N^{-A}$ of the column vectors
\begin{equation}\label{Eulerian-matrix}
\begin{bmatrix}
\mathrm{Li}^{\star}_{(z)}(s)_{\geq N}   \\
 \mathrm{Li}^{\star}_{(z)}(s+1)_{\geq N}\\
 \vdots\\
\mathrm{Li}^{\star}_{(z)}(s+k_0-1)_{\geq N}
\end{bmatrix}
\text{and} 
 \begin{bmatrix}
 z^{N-1}N^{-s}\\
 z^{N-1}N^{-s-1}\\
 \vdots\\
 z^{N-1}N^{-s-k_0+1}
\end{bmatrix},
\end{equation}
respectively, and $\mathbf{A}$ is the square matrix whose entries are the germs at $a$ of
the rational functions of $s$ as per the matrix given below:
$$
\mathbf{A}(s)=\begin{bmatrix}
{\bar z}-1 &\frac{(s)_1}{1!} &\frac{(-1)^1(s)_2}{2!} &\cdots &\frac{(-1)^{k_0-2}(s)_{k_0-1}}{(k_0-1)!}\\ 
     0                         &{\bar z}-1          &\frac{(s+1)_1}{1!}       &\cdots  &\frac{(-1)^{k_0-3}(s+1)_{k_0-2}}{(k_0-2)!}\\
     \vdots                 &\vdots                              &\vdots                           &\ddots         &\vdots\\
     0                            &0                                      &0                           &\cdots     &{\bar z}-1
\end{bmatrix}.
$$
Note that as $z\neq 1$, the matrix $\mathbf{A}(s)$ is invertible in $\mathbf{T}(\C(s))$, the ring of upper 
triangular matrices of order $k_0\times k_0$ with elements in the field of rational functions $\C(s)$. 
To find the inverse, we use the idea from \cite[\S 5.2]{BS1} with suitable changes. Note that
$$
\mathbf{A}(s)={\bar z}\mathbf{I}-\exp(-\mathbf{M}(s)),
$$
where $\mathbf{I}$ denotes the identity matrix of order $k_0\times k_0$ and $\mathbf{M}(s)$ is the 
nilpotent matrix over $\C(s)$ defined by
$$
\mathbf{M}(s)=\begin{bmatrix}
0  &s &0 &\cdots &0 \\ 
     0                         &0          &s+1      &\cdots  &0 \\
     \vdots                 &\vdots                              &\vdots                           &\ddots         &\vdots\\
     0                            &0                                      &0                          &\cdots                &0
\end{bmatrix}.
$$
Now we need to invert the power series $c-e^{-x}$ in $\C{[[x]]}$ for some $c\neq 1$. For this, 
we recall the definition of Eulerian polynomials. The Eulerian polynomials $A_n(t)$ for $t\neq 1$, 
are defined by the following generating series:
$$
\frac{1-t}{e^{(t-1)y}-t}=\sum_{n\geq 0}A_n(t)\frac{y^n}{n!}.
$$
Hence
$$
\frac{1}{c-e^{-x}}=\frac{1}{c-1}\sum_{n\geq 0}(-1)^n\frac{A_n(c)}{(c-1)^n}\frac{x^n}{n!}.
$$
It can be checked that $A_0(t)=1$. Now define $A^{\star}_{n}(c):=(-1)^{n}A_n(c)$ for all complex
numbers $c\neq 1$ and integers $n\geq 1$. Then the inverse of ${\bf A}(s)$ in $\mathbf{T}(\C(s))$ is given by
\begin{align*}
{\bf A}(s)^{-1}&=\frac{1}{{\bar z}\mathbf{I}-\exp(-\mathbf{M}(s))}=
\left({\bar z}-1\right)^{-1}\sum_{n=0}^{k_0-1}\frac{A^{\star}_{k}({\bar z})}{({\bar z}-1)^n}\frac{(\mathbf{M}(s))^n}{n!},
\end{align*}
i.e.,
\begin{align}\label{A-inverse}
\mathbf{A}(s)^{-1}=\left({\bar z}-1\right)^{-1}\begin{bmatrix}
1 &\frac{(s)_1A_1^{\star}({\bar z})}{({\bar z}-1)1!} &\frac{(s)_2A_2^{\star}({\bar z})}{({\bar z}-1)^22!} &\cdots &\frac{(s)_{k_0-1}A_{k_0-1}^{\star}({\bar z})}{({\bar z}-1)^{k_0-1}(k_0-1)!}\\ 
     0                         &1          &\frac{(s+1)_1A_1^{\star}({\bar z})}{({\bar z}-1)1!}       &\cdots  &\frac{(s+1)_{k_0-2}A_{k_0-2}^{\star}({\bar z})}{({\bar z}-1)^{k_0-2}(k_0-2)!}\\
     \vdots                 &\vdots                              &\vdots                           &\ddots         &\vdots\\
     0                            &0                                      &0                           &\cdots     &1
\end{bmatrix}.
\end{align}
Thus ${\bf A}$ is invertible and from \eqref{matrix} we have
$$
\mathbf{U}=\mathbf{A}^{-1}\mathbf{W}.
$$
By comparing the first entries on both the sides, we get that the sequence 
(of germs at $a$) of $(\mathrm{Li}^{\star}_{(z)}(s)_{\geq N})_{N\geq 2}$ has the same formal asymptotic 
expansion to precision $N^{-A}$ as that of
$$
\left(\sum_{k=0}^{k_0-1}\frac{(s)_{k}A_{k}^{\star}({\bar z})}{({\bar z}-1)^{k+1}k!}z^{N-1}N^{-s-k}\right)_{N\geq 2}.
$$
Since $z\neq 1$, therefore by \exref{g-r=1}, this formal asymptotic expansion to precision $N^{-A}$
(relative to $\mathcal{E}$ with coefficients in $\mathcal{C}_{(z)}$)
is zero. This implies that the formal asymptotic 
expansion to precision $n^{-A}$ (of germs at $a$) of $(\mathrm{Li}^{\star}_{(z)}(s)_{\geq N})_{N\geq 2}$
(relative to $\mathcal{E}$ with coefficients in $\mathcal{C}_{(z)}$)
is also equal to zero.  Since the positive integer $A$ is arbitrary, this completes the proof.
\end{proof}

\subsection{Asymptotic expansion of $\mathrm{Li}^{\star}_{\bf z}(s_1,\ldots,s_r)_{\geq N}$}\label{asymp-li-star}
 Let $r\geq1$ be an integer. Recall that for an integer $N\geq 2$ and $(s_1,\ldots,s_r)\in U_r$, 
$\mathrm{Li}^{\star}_{\bf z}(s_1,\ldots,s_r)_{\geq N}$ is defined by the series
\begin{align*}
  \sum_{n_1\geq \cdots \geq n_r \geq N}\frac{z_1^{n_1}\cdots z_r^{n_r}}{n_1^{s_1}\cdots n_r^{s_r}}.
\end{align*}
It is a holomorphic function on $U_r$ and can be extended to a meromorphic function on $\C^r$.
We need to define some notations to state our next proposition. For each $1\leq i\leq r$,
define $q_{[1,i]}:=$1 if  $z_{[1,i]}=1$, else put $q_{[1,i]}=0$. Consider the integer $Q_{[1,i]}$ 
which counts the number of indices $1\leq j\leq i$ such that $z_{[1,j]}=1$, i.e.,
 $$
 Q_{[1,i]}:=q_{[1,1]}+q_{[1,2]}+\cdots+q_{[1,i]}.
 $$
 Let the sets $I_{i}$ and $I'_{i}$ be defined as follows:
 $$
 I_{i}=\{i: 1\leq j\leq i \text{ and }  z_{[1,j]}=1\}\\
 \text{ and }\\
  I'_{i}=\{j: 1\leq j\leq i \text{ and }  z_{[1,j]}\neq 1\}.
 $$
Clearly $Q_{[1,i]}=|I_{i}|$ and $I_{i}\cup I'_{i}=\{1,\ldots,i\}.$ Also, for $\mathbf{s}=(s_1,\ldots,s_r)\in \C^r$, $|\mathbf{s}|$ 
denotes the sum of its coordinates, i.e., $|\mathbf{s}|=s_1+\cdots+s_r$. Then, we prove the following proposition which 
generalises \propref{*-mp-asym-r=1} and \cite[Remark 17]{BS2} for $r\geq 2$.

 \begin{prop}\label{asymp-mp*}
 Let $r\geq 1$ be an integer and $\mathbf{a}=(a_1,\ldots,a_r)\in \Z^r$. The sequence (of germs at $\mathbf{a}$) of the 
 meromorphic functions $(\mathrm{Li}^{\star}_{\bf z}(s_1,\ldots,s_r)_{\geq N})_{N\geq 2}$ has a complete 
 asymptotic expansion relative to $\mathcal{E}$ with coefficients in $\mathcal{C}_{\bf z}$. Moreover, the associated formal 
 complete asymptotic expansion is the formal Laurent series
 $$
 \sum_{\mathbf{k}\in \N^r}\sum_{l\geq 0}h^{\star}_{(l,\mathbf{k})}L^{l}X^{|\mathbf{a}|+|\mathbf{k}|-Q_{[1,r]}},
 $$
 where for $\mathbf{k}=(k_1,\ldots,k_r)\in \N^r$, $h^{\star}_{(l,\mathbf{k})}$ is the germ at $\mathbf{a}$ of the
 holomorphic function defined by
 \begin{align*}
  \mathbf{s}=(s_1,\ldots,s_r)\mapsto & \ \frac{q_{[1,r]} (-1)^l}{l!k_1!\cdots k_r! }
  \prod_{j\in I'_{r}}
  \frac{{\overline{z_{[1,j]}}}A^{\star}_{k_j}({\overline{z_{[1,j]}}})}{({\overline{z_{[1,j]}}}-1)^{k_j+1}}\prod_{j\in I_{r}}B^{\star}_{k_j}\\
  &\times(s_1)_{k_1-q_{[1,1]}} (s_1+s_2+k_1-Q_{[1,1]})_{k_2-q_{[1,2]}}\cdots\\
  &\times(s_1+\cdots+s_r+k_1+\cdots+k_{r-1}-Q_{[1,r-1]})_{k_r-q_{[1,r]}} (|\mathbf{s}|-|\mathbf{a}|)^l.
\end{align*}
 \end{prop}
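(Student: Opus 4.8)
The plan is to prove \propref{asymp-mp*} by induction on the depth $r$, using \propref{trans} (the translation formula for $\mathrm{Li}^\star_{\bf z}(\cdots)_{\geq N}$) to peel off the outermost variable, exactly as in the depth-one case treated in \propref{*-mp-asym-r=1} and in \cite[Remark 17]{BS2}. The base case $r=1$ is precisely \propref{*-mp-asym-r=1} (when $z_1\neq 1$) together with \cite[Remark 17]{BS2} (when $z_1=1$); one checks that the claimed Laurent series specialises correctly: for $z_1\neq 1$ we have $q_{[1,1]}=0$, so $I_1=\varnothing$, $Q_{[1,1]}=0$, the empty products equal $1$, and the stated coefficient becomes $q_{[1,1]}(\cdots)=0$, giving the zero expansion as in \propref{*-mp-asym-r=1}; for $z_1=1$ we have $q_{[1,1]}=1$ and the formula collapses to $\frac{(-1)^l B^\star_{k_1}}{l!\,k_1!}(s_1)_{k_1-1}(s_1-a_1)^l$, matching \cite[Remark 17]{BS2}.

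\textbf{Inductive step.} For $r\geq 2$, fix a positive integer $A$ and argue by a secondary induction on the least non-negative integer $k_0$ with $(a_r+k_0,a_{r-1},\ldots,a_1)\in U_r$ (equivalently, on how far $s_r$ must be shifted to enter the region of absolute convergence), just as in the proof of \propref{*-mp-asym-r=1}. The translation formula \eqref{eq2}, rearranged to isolate $\mathrm{Li}^\star_{\bf z}(s_1,\ldots,s_r)_{\geq N}$, expresses it (up to a nonzero scalar $1-\bar z_1$ when $z_1\neq 1$, or with the $z_{[1,2]}$-term moved across when $z_1=1$) in terms of: (i) the term $\mathrm{Li}^\star_{(z_{[1,2]},z_3,\ldots,z_r)}(s_1+s_2-1,s_3,\ldots,s_r)_{\geq N}$ of depth $r-1$, and (ii) finitely many terms $\mathrm{Li}^\star_{\bf z}(s_1+k,s_2,\ldots,s_r)_{\geq N}$ with $1\le k\le k_0-1$, to which the secondary induction hypothesis applies, plus (iii) a tail $\sum_{k\geq k_0-1}$ which is $o(N^{-A})$ in sup-norm on a suitable polydisc by \lemref{uni-convg}, hence has zero asymptotic expansion to precision $N^{-A}$ by \exref{zero-asymp}. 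Applying \rmkref{change-variable} to the affine substitution $(s_1,\ldots,s_r)\mapsto(s_1+s_2-1,s_3,\ldots,s_r)$ transports the depth-$(r-1)$ formal expansion (known by the primary induction hypothesis) to a formal expansion at $\mathbf{a}$; combining these, and invoking \lemref{lem-den} to clear the denominator $1-\bar z_1$ (or the relevant polynomial factor), shows that $(\mathrm{Li}^\star_{\bf z}(s_1,\ldots,s_r)_{\geq N})_{N\ge 2}$ has an asymptotic expansion to precision $N^{-A}$; since $A$ is arbitrary, it has a complete asymptotic expansion.

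\textbf{Identifying the coefficients: the inversion.} The remaining work is to identify the formal expansion explicitly. As in \propref{*-mp-asym-r=1}, one assembles the equations obtained by applying \eqref{eq2} to the shifted arguments $(s_r+j,s_{r-1},\ldots,s_1)$ for $0\le j\le k_0-1$ into a matrix identity $\mathbf{W}=\mathbf{A}(s)\,\mathbf{U}$ over the ring of upper-triangular $k_0\times k_0$ matrices with entries in $\C(s_1,\ldots,s_r)$, where $\mathbf{U}$ collects the sought formal expansions and $\mathbf{W}$ collects the (known, by induction on depth, and on $k_0$) formal expansions of the ``lower'' pieces. When $z_1\neq 1$, $\mathbf{A}(s)=\bar z_1\mathbf{I}-\exp(-\mathbf{M}(s))$ is invertible and its inverse is given by the Eulerian-polynomial formula \eqref{A-inverse}; reading off the first row of $\mathbf{U}=\mathbf{A}(s)^{-1}\mathbf{W}$ produces the factor $\frac{\bar z_1 A^\star_{k_1}(\bar z_1)}{(\bar z_1-1)^{k_1+1}}$ and, recursively (the entries of $\mathbf{W}$ being themselves depth-$(r-1)$ expansions whose leading variable is $s_1+s_2-1$), the nested Pochhammer symbols $(s_1)_{k_1-q_{[1,1]}}(s_1+s_2+k_1-Q_{[1,1]})_{k_2-q_{[1,2]}}\cdots$ and the index shift $|\mathbf{a}|+|\mathbf{k}|-Q_{[1,r]}$ on $X$. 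When $z_1=1$ the outermost step instead contributes a star-Bernoulli factor $B^\star_{k_1}$ exactly as in the depth-one $\zeta^\star$ case, which is why the two kinds of factors are separated over $I_r$ and $I'_r$ in the statement. The bookkeeping of how $Q_{[1,i]}$ and the Pochhammer arguments propagate through one peeling step is the main obstacle: one must check carefully that decrementing $z_1$ from the tuple shifts every subsequent index-count by $q_{[1,j]}$ in the right place, and that the $L$-power $l$ and the factorials $l!\,k_1!\cdots k_r!$ combine correctly under the Taylor expansion of $z_{[1,r]}^{\,?}N^{-(|\mathbf{s}|-\cdots)}$ around $\mathbf{a}$ — this is a routine but delicate induction that mirrors, with the extra root-of-unity twisting, the computation in \cite[Remark 17]{BS2}.
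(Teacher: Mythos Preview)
Your approach is essentially identical to the paper's: double induction (primary on the depth $r$, secondary on an auxiliary integer $k_0$), use of the translation formula \eqref{eq2} to reduce the depth, tail control via \lemref{uni-convg} and \exref{zero-asymp}, and an upper-triangular matrix inversion yielding star-Bernoulli factors when $z_1=1$ and Eulerian factors when $z_1\neq 1$.

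Two small slips to correct. First, you mix up which variable is being shifted: the translation formula \eqref{eq2} acts on the \emph{first} argument $s_1$, not on $s_r$, so $k_0$ must be defined through the first coordinate, and the matrix system involves the shifts $(s_1+j,s_2,\ldots,s_r)$ for $0\le j\le k_0-1$ (not $(s_r+j,s_{r-1},\ldots,s_1)$; you may be conflating this with the reversed ordering used in Lemmas~\ref{boundary-1}--\ref{boundary-3}). Second, your definition of $k_0$ omits the dependence on $A$: to obtain precision $N^{-A}$ rather than merely $N^{-\epsilon}$ for some small $\epsilon>0$, the paper takes $k_0$ minimal with $(a_1+k_0-A,a_2,\ldots,a_r)\in U_r$; the extra $-A$ is exactly what makes the tail $\sum_{k\ge k_0}$ of size $o(N^{-A})$ via the estimate in the proof of \lemref{uni-convg}. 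With these two fixes your outline coincides with the paper's proof.
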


\begin{proof}
To prove the proposition, we will use induction on $r$. Let $r=1$. If $z=1$,
then we have the desired result by \cite[Remark 17]{BS2}. If $z\neq 1$ 
then, $q_{[1,1]}=0$ and hence by \propref{*-mp-asym-r=1}, $h^{\star}_{(l,k)}=0$ for all $(l,k)\in \N\times\N$.
Therefore, we have the desired result.

Next assume that $r\geq 2$. Let $A$ be a positive integer and $k_0$ be the smallest non-negative integer such that
$(a_1+k_0-A,a_2,\ldots,a_r)\in U_r$. Let $D$ be a polydisc around ${\bf a}$ with small polyradius $(\rho_1,\ldots,\rho_r)$
such that $\rho_1+\cdots+\rho_r<1$. If $k_0=0 $, then $(a_1-A,a_2,\ldots,a_r)\in U_r$.  Therefore,
$\lVert\mathrm{Li}^{\star}_{\bf z}(s_1,\ldots,s_r)_{\geq N}\rVert_D=o(N^{-A})$ as 
$N \to \infty$. Hence by  \exref{zero-asymp}, the sequence (of germs  at ${\bf a}$) of the holomorphic functions
$(\mathrm{Li}^{\star}_{\bf z}(s_1,\ldots,s_r)_{\geq N})_{N\geq 2}$
 has an asymptotic expansion to precision $N^{-A}$
 (relative to $\mathcal{E}$ and coefficients in $\mathcal{C}_{\bf z}$).
    
Now assume that $k_0\geq 1$. We will use induction on $k_0$. First, consider the case when $z_1=1$. Then from \eqref{eq2} we have,
\begin{align*}
&\mathrm{Li}^{\star}_{(z_{[1,2]},z_{3},\ldots,z_r)}(s_1+s_{2}-1,s_{3},\ldots,s_r)_{\geq N}
=\sum_{k\geq0}(-1)^k\frac{(s_1-1)_{k+1}}{(k+1)!}\mathrm{Li}^{\star}_{\bf z}(s_1+k,s_2,\ldots,s_r)_{\geq N}.
\end{align*}
Applying \lemref{uni-convg}, $D$ can be so chosen that as $N \to \infty$, we have
$$
\sum_{k\geq k_0}\left\lVert (-1)^k\frac{(s_1-1)_{k+1}}{(k+1)!}\mathrm{Li}^{\star}_{\bf z}(s_1+k,s_2,\ldots,s_r)_{\geq N} \right\rVert_D=o(N^{-A}).
$$
Therefore, by the induction hypothesis on the depth and \rmkref{change-variable}, the sequence (of germs at $\mathbf{a}$) 
of the meromorphic functions $(\mathrm{Li}^{\star}_{(z_{[1,2]},z_{3}\ldots,z_r)}(s_1+s_{2}-1,s_{3},\ldots,s_r)_{\geq N})_{N\geq 2}$ has an 
asymptotic expansion to precision $N^{-A}$ (relative to $\mathcal{E}$ and coefficients in $\mathcal{C}_{\bf z}$). Hence, this
together with \exref{zero-asymp} yield that the sequence (of germs at $\mathbf{a}$) of the meromorphic functions 
$$
\left(\sum_{k=0}^{k_0-1} (-1)^k\frac{(s_1-1)_{k+1}}{(k+1)!}
\mathrm{Li}^{\star}_{\bf z}(s_1+k,s_2,\ldots,s_r)_{\geq N}\right)_{N\geq 2}
$$
has an asymptotic expansion to precision $N^{-A}$ (relative to $\mathcal{E}$ and coefficients in $\mathcal{C}_{\bf z}$).
Note that for $1\leq k\leq k_0-1$, the sequence (of germs at 
$(a_1+k,a_2,\ldots,a_r)$) of $(\mathrm{Li}^{\star}_{\bf z}(s_1,\ldots,s_r)_{\geq N})_{N\geq 2}$
has an asymptotic expansion to precision $N^{-A}$, by the induction hypothesis on $k< k_0$. Thus for
$1\leq k\leq k_0-1$, the sequence (of germs at $\mathbf{a}$) of meromorphic functions 
$(\mathrm{Li}^{\star}_{\bf z}(s_1+k,s_2,\ldots,s_r)_{\geq N})_{N\geq 2}$ has an asymptotic 
expansion to precision $N^{-A}$. This implies that the sequence (of germs at 
$\mathbf{a}$) of the meromorphic functions $(\mathrm{Li}^{\star}_{\bf z}(s_1,\ldots,s_r)_{\geq N})_{N\geq 2}$
has an asymptotic expansion to precision $N^{-A}$  (relative to $\mathcal{E}$ and coefficients in $\mathcal{C}_{\bf z}$).

Now to write down the corresponding formal asymptotic expansion to precision $N^{-A}$,
we note that for $0\leq j\leq k_0-1$, the sequence (of germs at $\mathbf{a}$) of the meromorphic functions 
$\left(\sum_{k=0}^{k_0-j-1}(-1)^k \frac{(s_1+j-1)_{k+1}}{(k+1)!}\mathrm{Li}^{\star}_{\bf z}(s_1+j+k,s_2,\ldots,s_r)_{\geq N}\right)_{N\geq 2}$, 
and $(\mathrm{Li}^{\star}_{(z_{[1,2]},z_3,\ldots,z_r)}(s_1+s_2+j-1,s_3,\ldots,s_r)_{\geq N})_{N\geq 2}$
have the same formal asymptotic expansion to precision $N^{-A}$, by the above arguments.
This can be written as the following matrix identity:
 \begin{equation}\label{AX}
 \mathbf{Y}=\mathbf{BX},
 \end{equation}
where $\mathbf{X}, \mathbf{Y}$ are column vectors whose entries are the formal asymptotic expansion at 
$\mathbf{a}$ to precision $N^{-A}$ of the column vectors
$$
 \begin{bmatrix}
 \mathrm{Li}^{\star}_{\bf z}(s_1,\ldots,s_r)_{\geq N}     \\
 \mathrm{Li}^{\star}_{\bf z}(s_1+1,\ldots,s_r)_{\geq N}\\
 \vdots\\
 \mathrm{Li}^{\star}_{\bf z}(s_1+k_0-1,\ldots,s_r)_{\geq N}
\end{bmatrix}
\ \text{and} \
\begin{bmatrix}
 \mathrm{Li}^{\star}_{(z_{[1,2]},z_3,\ldots,z_r)}(s_1+s_2-1,s_3,\ldots,s_r)_{\geq N}    \\
 \mathrm{Li}^{\star}_{(z_{[1,2]},z_3,\ldots,z_r)}(s_1+s_2,s_3,\ldots,s_r)_{\geq N}\\
 \vdots\\
 \mathrm{Li}^{\star}_{(z_{[1,2]},z_3,\ldots,z_r)}(s_1+s_2+k_0-2,s_3,\ldots,s_r)_{\geq N}
\end{bmatrix},
$$
respectively, and $\mathbf{B}$ is the square matrix, whose entries are the germs at $\mathbf{a}$ of the rational
functions of $s_1$ as per the matrix given below:
$$
{\bf B}(s_1)=
\begin{bmatrix}
\frac{(s_1-1)_1}{1!} &\frac{(-1)(s_1-1)_2}{2!} &\frac{(-1)^2(s_1-1)_3}{3!} &\cdots &\frac{(-1)^{k_0-1}(s_1-1)_{k_0}}{k_0!}\\ 
     0                         &\frac{(s_1)_1}{1!}          &\frac{(-1)(s_1)_2}{2!}       &\cdots  &\frac{(-1)^{k_0-2}(s_1)_{k_0-1}}{(k_0-1)!}\\
     \vdots                 &\vdots                              &\vdots                           &\ddots         &\vdots\\
     0                            &0                                      &0                           &\cdots     &\frac{(s_1+k_0-2)_1}{1!}
\end{bmatrix}.
$$
The matrix $\mathbf{B}(s_1)$ is invertible in the ring of upper triangular matrices with entries in the field of rational functions 
$\Q(s_1)$, denoted by $\mathbf{T}(\Q(s_1))$ and is given by the following square matrix (see \cite[Remark 17]{BS2}):
$$
{\bf B}(s_1)^{-1}=
\begin{bmatrix}
\frac{1}{s_1-1} &\frac{B_1^{\star}}{1!} &\frac{(s_1)_1B_2^{\star}}{2!} &\cdots &\frac{(s_1)_{k_0-2}B_{k_0-1}^{\star}}{(k_0-1)!}\\ 
     0                         &\frac{1}{s_1}          &\frac{B_1^{\star}}{1!}       &\cdots  &\frac{(s_1+1)_{k_0-3}B_{k_0-2}^{\star}}{(k_0-2)!}\\
     \vdots                 &\vdots                              &\vdots                           &\ddots         &\vdots\\
     0                            &0                                      &0                           &\cdots     &\frac{1}{s_1+k_0-2}
\end{bmatrix},
$$
where for integers $n\ge 1$, $B_n^{\star}$ denote the star Bernoulli numbers, as before.
Hence the matrix ${\bf B}$ is invertible and we can rewrite equation \eqref{AX} as
$$
\mathbf{X}=\mathbf{B}^{-1}{\bf Y}.
$$
Comparing the first entries on both the sides, we get that the formal asymptotic expansion (of germs at $\mathbf{a}$) of
$(\mathrm{Li}^{\star}_{\bf z}(s_1,\ldots,s_r)_{\geq N})_{N\ge 2}$  
to precision $N^{-A}$ is same as that of 
$$
\left(\sum_{k=0}^{k_0-1}\frac{B_k^{\star}}{k!}(s_1)_{k-1}
\mathrm{Li}^{\star}_{(z_{[1,2]},z_3,\ldots,z_r)}(s_1+s_2+k-1,s_3,\ldots,s_r)_{\geq N} \right).
$$
Now by the induction hypothesis on depth $r$, we get that for $k\ge 0$, the formal complete asymptotic expansion of the 
sequence (of germs at $\mathbf{a}$) of
$(\mathrm{Li}^{\star}_{(z_{[1,2]},z_3,\ldots,z_r)}(s_1+s_2+k-1,s_3,\ldots,s_r)_{\geq N})_{N\ge 2}$ is 
$$
\sum_{k_2,\ldots,k_r\ge 0}\sum_{l\geq 0}h^{\star}_{(l,k,k_2,\ldots,k_r)}L^{l}
X^{|\mathbf{a}|+k+k_2+\cdots+k_r-1-(q_{[1,2]}+\cdots+q_{[1,r]})},
 $$
where $h^{\star}_{(l,k,k_2,\ldots,k_r)}$ is the germs at $\mathbf{a}$ of the function
\begin{align*}
\mathbf{s}\mapsto &\ \frac{q_{[1,r]} (-1)^l}{l!k_2!\cdots k_r! }
  \prod_{j\in J{'}}
  \frac{{\overline{z_{[1,j]}}}A^{\star}_{k_j}({\overline{z_{[1,j]}}})}{({\overline{z_{[1,j]}}}-1)^{k_j+1}}\prod_{j\in J}B^{\star}_{k_j}\\
 &\times (s_1+s_2+k-1)_{k_2-q_{[1,2]}}(s_1+s_2+s_3+k+k_2-1-q_{[1,2]})_{k_3-q_{[1,3]}}\cdots\\
 &\times (s_1+\cdots+s_r+k+k_2+\cdots+k_{r-1}-1-q_{[1,2]}-\cdots-q_{[1,r-1]})_{k_r-q_{[1,r]}} (|\mathbf{s}|-|\mathbf{a}|)^l,
\end{align*}
with $J$ being the set of integers $2\leq j\leq r$ such that $z_{[1,j]}=1$
and $J'$ being the set of integers $2\leq j\leq r$ such that $z_{[1,j]}\neq1$.
Note that $z_1=1$, implies that $q_{[1,1]}=1$. Hence $I_{r}=J\cup\{1\}$ and $I'_{r}=J'$. Hence from the
 above argument we get that the formal complete asymptotic expansion of the sequence (of germs at $\mathbf{a}$)
 of the meromorphic function $(\mathrm{Li}^{\star}_{\bf z}(s_1,\ldots,s_r)_{\geq N})_{N\ge 2}$ is given by 
$$
\sum_{\mathbf{k}\in \N^r}\sum_{l\geq 0}h^{\star}_{(l,\mathbf{k})}L^{l}X^{|\mathbf{a}|+|\mathbf{k}|-Q_{[1,r]}},
$$
as $Q_{[1,r]}=1+q_{[1,2]}+\cdots+q_{[1,r]}=q_{[1,1]}+\cdots+q_{[1,r]}$. So, we have desired result when $z_1=1$.

Now suppose $z_1\neq 1$. In this case, as before we write \eqref{eq2} as
 \begin{align*}
 \begin{split}
&{\overline{z_1}}\mathrm{Li}^{\star}_{(z_{[1,2]},z_{3},\ldots,z_r)}(s_1+s_{2},s_{3},\ldots,s_r)_{\geq N}\\
&=\left({\overline{z_1}}-1\right)\mathrm{Li}^{\star}_{\bf z}(s_1,\ldots,s_r)_{\geq N}+
\sum_{k\geq0}(-1)^k\frac{(s_1)_{k+1}}{(k+1)!}\mathrm{Li}^{\star}_{\bf z}(s_1+k+1,s_2,\ldots,s_r)_{\geq N}.
\end{split}
\end{align*}
Again similar to the previous case, using the \lemref{uni-convg}, induction on depth $r$, \rmkref{change-variable} 
and then induction on $k_0$, which is the smallest non-negative integer such that $(a_1+k_0-A,a_2,\ldots,a_r)\in U_r$, 
we get that the sequence (of germs at ${\bf a}$) of the meromorphic functions 
$$
\left(\mathrm{Li}^{\star}_{\bf z}(s_1,\ldots,s_r)_{\geq N}\right)_{N\geq 2}
$$
has an asymptotic expansion to precision $n^{-A}$ (relative to $\mathcal{E}$ and coefficients in 
$\mathcal{C}_{\bf z}$). Moreover, the sequence (of germs at ${\bf a}$) of 
$\left({\overline{z_1}}\mathrm{Li}^{\star}_{(z_{[1,2]},z_{2},\ldots,z_r)}(s_1+s_2,s_{3},\ldots,s_r)_{\geq N}\right)_{N\geq 2}$ 
and 
$$
\left(({\overline{z_1}}-1)\mathrm{Li}^{\star}_{\bf z}(s_1,\ldots,s_r)_{\geq N}+\sum_{k=0}^{k_0-2}
\frac{(-1)^k(s_1)_{k+1}}{(k+1)!}\mathrm{Li}^{\star}_{\bf z}(s_1+k+1,s_2,\ldots,s_r)_{\geq N}\right)_{N\geq 2}
$$
have the same formal asymptotic expansion to precision $n^{-A}$ (relative to $\mathcal{E}$ 
and coefficients in $\mathcal{C}_{\bf z}$).
Now we write down this expansion. As before, the above statement is true if we replace
$(s_1,s_{2},\ldots,s_r)$ by $(s_1+j,s_{2},\ldots,s_r)$ and $k_0$ by $k_0-j$ for each $0\leq j\leq k_0-1$.
Again, we write this as the following matrix identity:
 \begin{equation}\label{AX2}
 {\overline{z_1}}\mathbf{Z}=\mathbf{AX},
 \end{equation}
where $\mathbf{X}$ is as before, ${\bf Z}$ is obtained from $\mathbf{Y}$ by replacing $s_1$ with $s_1+1$,
and $\mathbf{A}$ is the square matrix as in \eqref{matrix} (with $s, z$ replaced by 
$s_1$ and $z_1$, respectively). Therefore, we can write
$$
\mathbf{X}= {\overline{z_1}}\mathbf{A}^{-1}{\bf Z}.
$$
Now comparing the first entries on both the sides, we get that the formal asymptotic expansion (of germs at ${\bf a}$)
of $(\mathrm{Li}^{\star}_{\bf z}(s_1,\ldots,s_r)_{\geq N})_{N\ge 2}$ to precision $N^{-A}$ is the same as that of 
$$
\left(\sum_{k=0}^{k_0-1}\frac{\overline{z_1}A_k^{\star}({\overline{z_1}})}{({\overline{z_1}}-1)^{k+1}k!}(s_1)_{k} \mathrm{Li}^{\star}_{(z_{[1,2]},z_3,\ldots,z_r)}(s_1+s_2+k,s_3,\ldots,s_r)_{\geq N} \right)_{N\geq 1}.
$$
Note that $q_{[1,1]}=0$ in this case. Hence the proof follows from the induction hypothesis on depth,
as in the case of $z_1=1$.
 \end{proof}
 
\section{Behaviour of the multiple polylogarithms at general integer points}\label{sec-gen}

As before, let $r \ge 1$ be an integer and
let ${\bf z}=(z_1,\ldots,z_r) \in \C^r$ be such that $z_1,\ldots,z_r$ are fixed roots of unity.
In \S\ref{behavior-at-boundary}, we have studied the local behaviour of
the multiple polylogarithms at integer points of  $\overline{U_r(\mathbf{z})}$. In this section, our aim is to study the
local behaviour of the multiple 
polylogarithms at general integer points of $\C^r$. 
To state our theorem we first need some notations.

For each $1\leq j\leq i\leq r$, define $q_{[j,i]}=$1 if  $z_{[j,i]}=1$, otherwise put $q_{[j,i]}=0$.
Consider the integer $Q_{[j,i]}$ which counts the integers $k$ such $j\leq k\leq i$ and $z_{[k,i]}=1$, i.e.,
 $$
 Q_{[j,i]}:=q_{[j,i]}+q_{[j+1,i]}+\cdots+q_{[i,i]}.
 $$
 Let $J_{i}$ be the set of integers $1\leq j\leq i$ (as in \S\ref{sec-intro}) such that $z_{[j,i]}=1$ and $J'_{i}$
 be the set of integers $1\leq j\leq i$ such that $z_{[j,i]}\neq 1$, i.e.,
 $$
 J_{i}=\{j: 1\leq j\leq i \text{ and } z_{[j,i]}=1\}\\
 \text{ and }\\
  J'_{i}=\{j: 1\leq j\leq i \text{ and }  z_{[j,i]}\neq 1\}.
 $$
 These notations are set in the reverse ordering when compared to the notations in \S \ref{asymp-li-star}, as we need
 to use the combinatorial identity \eqref{combi-indenty} where the  indices in the tail of the multiple
 polylogarithm-star function are in the reverse order.
In this context, we prove the following theorem.
\begin{thm}\label{thm-gen-point}
Given ${\bf a}=(a_1,\ldots,a_r)\in \Z^r$, the formal power series
\begin{align*}
\sum_{k_1,\ldots,k_r\geq 0}\frac{(-1)^{k_1+\cdots+k_r}}{k_1!\cdots k_r!}\ell_{[k_1,\ldots,k_r]}^{(a_1,\ldots,a_r)}(\mathbf{z})(s_1-a_1)^{k_1}\cdots(s_r-a_r)^{k_r}
\end{align*}
converges in a neighbourhood of ${\bf a}$ and extends to a meromorphic
function on $\C^r$. If we denote it by $\mathrm{Li}^{{\rm Reg}}_{(\mathbf{z};\mathbf{a})}(s_1,\ldots,s_r)$,
then we have the following equality between meromorphic functions
\begin{align}\label{behaviour-gen-point}
\mathrm{Li}^{{\rm Reg}}_{(\mathbf{z};\mathbf{a})}(s_1,\ldots,s_r)
=\sum_{i=0}^{r}(-1)^i \mathrm{Li}_{(z_{i+1},\ldots,z_r)}(s_{i+1},\ldots,s_r) C_i(s_1,\ldots,s_i),
\end{align}
where $C_0(\varnothing):=1$ and for $1 \le i \le r$,
\begin{align*}
\begin{split}
C_i(s_1,\ldots,s_i):=q_{[1,i]}\sum_{\substack{k_1,\ldots,k_i\geq 0 \\ \sum_{1\leq t\leq i}(k_t+a_t)=Q_{[1,i]}}}
&\frac{1}{k_1!\cdots k_i!}
\prod_{j\in J_{i}^{'}}\frac{\overline{z_{[j,i]}}A_{k_j}^{\star}(\overline{z_{[j,i]}})}{(\overline{z_{[j,i]}}-1)^{k_j+1}} \prod_{j\in J_{i}}B_{k_j}^{\star}\\
& \times (s_i)_{k_i-q_{[i,i]}}(s_i+s_{i-1}+k_i-Q_{[i,i]})_{k_{i-1}-q_{[i-1,i]}}\cdots\\
& \times (s_i+\cdots+ s_{1}+k_i+\cdots+k_2-Q_{[2,i]})_{k_{1}-q_{[1,i]}}.
\end{split}
\end{align*}
\end{thm}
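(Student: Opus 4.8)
The plan is to combine the combinatorial identity of \thmref{combi} with the asymptotic expansion of the tail of the multiple polylogarithm-star function from \propref{asymp-mp*}, working inside the formalism of \S\ref{sec-asymp}. The starting observation is that the $\mathbf{k}=(k_1,\ldots,k_r)$-th Taylor coefficient of $\mathrm{Li}_{\bf z}(s_1,\ldots,s_r)_{<N}$ around $\mathbf{a}$ equals $\tfrac{(-1)^{k_1+\cdots+k_r}}{k_1!\cdots k_r!}$ times the partial sum \eqref{Reg-series}. By \cite[\S2]{PSBS} each such sequence in $N$ has a complete asymptotic expansion relative to $\mathcal{E}$ with coefficients in $\mathcal{C}_{\bf z}$, and the constant term of this partial sum (in the sense of \cite[\S2]{PSBS}), namely $\tfrac{(-1)^{k_1+\cdots+k_r}}{k_1!\cdots k_r!}\ell_{[k_1,\ldots,k_r]}^{(a_1,\ldots,a_r)}(\mathbf{z})$, coincides with the $(1)$-coordinate (with respect to the basis $\mathcal{B}_{\bf z}$) of the $L^0X^0$-coefficient of that expansion. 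Hence, once we know that the sequence of germs $(\mathrm{Li}_{\bf z}(s_1,\ldots,s_r)_{<N})_{N\ge 2}$ itself admits a complete asymptotic expansion at $\mathbf{a}$, the $L^0X^0$-coefficient of its formal complete asymptotic expansion is exactly the power series \eqref{power-series}, and convergence of \eqref{power-series} near $\mathbf{a}$ is part (c) of the definition in \S\ref{holo-asym}.

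To produce and compute this expansion I would invoke \thmref{combi}, which writes $\mathrm{Li}_{\bf z}(s_1,\ldots,s_r)_{<N}$ as a finite sum over $0\le i\le r$ of products of the fixed (i.e.\ $N$-independent) meromorphic germ $\mathrm{Li}_{(z_{i+1},\ldots,z_r)}(s_{i+1},\ldots,s_r)$ with the tail $\mathrm{Li}^{\star}_{(z_i,\ldots,z_1)}(s_i,\ldots,s_1)_{\ge N}$. Regarding both factors as germs at $\mathbf{a}$ in $\C^r$ via the obvious projections and the change-of-variables of \rmkref{change-variable}, \propref{asymp-mp*} supplies a complete asymptotic expansion for the second factor; moreover, taking finite sums and multiplying by a fixed meromorphic germ both preserve complete asymptotic expansions, with the formal expansion transformed accordingly --- these are the variable-coefficient analogues of the corresponding facts in \cite{BS2} and follow from \lemref{lem-den} together with routine manipulation of Taylor coefficients. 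To stay in the holomorphic setting one first multiplies through by a suitable nonzero polynomial $f$ that clears, on a polydisc around $\mathbf{a}$, the (finitely many, $N$-independent) polar hyperplanes of all the germs $\mathrm{Li}^{\star}_{(z_i,\ldots,z_1)}(s_i,\ldots,s_1)_{\ge N}$ and $\mathrm{Li}_{(z_{i+1},\ldots,z_r)}(s_{i+1},\ldots,s_r)$ as well as the poles of the rational functions $C_i$, and then divides back by $f$ using \lemref{lem-den}. This yields the complete asymptotic expansion of $(\mathrm{Li}_{\bf z}(s_1,\ldots,s_r)_{<N})_{N\ge 2}$, and hence the convergence of \eqref{power-series}.

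It then remains to match the $L^0X^0$-coefficient of the resulting formal expansion with the claimed formula. Since $\mathrm{Li}_{(z_{i+1},\ldots,z_r)}(s_{i+1},\ldots,s_r)$ carries no $L$ or $X$, this coefficient equals $\sum_{i=0}^{r}(-1)^i\mathrm{Li}_{(z_{i+1},\ldots,z_r)}(s_{i+1},\ldots,s_r)$ times the $L^0X^0$-coefficient of the formal complete asymptotic expansion of $(\mathrm{Li}^{\star}_{(z_i,\ldots,z_1)}(s_i,\ldots,s_1)_{\ge N})_{N\ge 2}$ at $(a_i,\ldots,a_1)$. Applying \propref{asymp-mp*} to the reversed tuple $(z_i,z_{i-1},\ldots,z_1)$ and the reversed point $(a_i,\ldots,a_1)$, the latter coefficient is $\sum h^{\star}_{(0,\mathbf{k})}$ over all $\mathbf{k}$ with $(a_i+\cdots+a_1)+|\mathbf{k}|=Q_{[1,i]}$; rewriting the product $z_{[1,j]}$ of the first $j$ entries of the reversed tuple as $z_{[i-j+1,i]}$, relabelling the summation index $k_j\mapsto k_{i-j+1}$, and identifying the exponent constraint with $\sum_{1\le t\le i}(k_t+a_t)=Q_{[1,i]}$, turns this sum term by term into $C_i(s_1,\ldots,s_i)$ (with the empty case giving $C_0=1$). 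Finally, both sides of \eqref{behaviour-gen-point} extend to meromorphic functions on $\C^r$ --- the left side by the definition of $\mathrm{Li}^{{\rm Reg}}_{(\mathbf{z};\mathbf{a})}$, the right side because each $C_i$ is rational and each $\mathrm{Li}_{(z_{i+1},\ldots,z_r)}$ is meromorphic on $\C^{r-i}$ by \thmref{poles-malf} --- so the identity, once established on a neighbourhood of $\mathbf{a}$, holds on all of $\C^r$.

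I expect the main obstacle to be the bookkeeping in this last step: carefully tracking the reversal of the order of the variables and of the multi-indices between \propref{asymp-mp*} (stated via $q_{[1,j]},Q_{[1,j]},I_j,I'_j$) and the statement of \thmref{thm-gen-point} (stated via the reversed $q_{[j,i]},Q_{[j,i]},J_i,J'_i$), and checking that the Pochhammer factors and the Eulerian/star-Bernoulli coefficients correspond exactly after the relabelling $k_j\mapsto k_{i-j+1}$. A secondary, but only technical, point is the passage between the holomorphic and meromorphic settings for $(\mathrm{Li}_{\bf z}(s_1,\ldots,s_r)_{<N})_{N\ge 2}$ described above, which is routine given \lemref{lem-den} but is needed to legitimately conclude the convergence of \eqref{power-series}.
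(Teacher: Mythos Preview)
Your proposal is correct and follows essentially the same approach as the paper: both proofs use the combinatorial identity \eqref{combi-indenty} together with \propref{asymp-mp*} (via \rmkref{change-variable}) to obtain a complete asymptotic expansion of $(\mathrm{Li}_{\bf z}(s_1,\ldots,s_r)_{<N})_{N\ge 2}$ at $\mathbf{a}$, identify its $L^0X^0$-coefficient $g_{(0,0)}$ with the formal power series \eqref{Reg-Li}, and then read off \eqref{behaviour-gen-point} by selecting the terms with $l=0$ and exponent $\sum_t(k_t+a_t)=Q_{[1,i]}$. Your treatment is in fact slightly more explicit than the paper's about the holomorphic/meromorphic passage via \lemref{lem-den} and about the index-reversal bookkeeping needed to convert the output of \propref{asymp-mp*} for $(z_i,\ldots,z_1)$ into the stated $C_i$.
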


As a key corollary, we first prove the holomorphicity of the multiple polylogarithm function at integer
points of $V_r(\mathbf{z})$. 

\begin{cor}\label{cor-vrz}
Given $\mathbf{a}\in \Z^r\cap V_r(\mathbf{z})$, the function $\mathrm{Li}_{\bf z}(s_{1},\ldots,s_r)$ is 
holomorphic at $\mathbf{a}$ and its value at $\mathbf{a}$ is equal to the regularised value
$\ell_{[0,\ldots,0]}^{(a_1,\ldots,a_r)}(\mathbf{z})$ of the series \eqref{mp} at $\mathbf{a}$.
\end{cor}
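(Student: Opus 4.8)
The plan is to derive \corref{cor-vrz} directly from \thmref{thm-gen-point} by analyzing what happens to the right-hand side of \eqref{behaviour-gen-point} when $\mathbf{a}$ lies in the interior region $V_r(\mathbf{z})$. First I would recall that the formal power series defining $\mathrm{Li}^{\rm Reg}_{(\mathbf{z};\mathbf{a})}$ converges in a neighbourhood of $\mathbf{a}$, so in particular $\mathrm{Li}^{\rm Reg}_{(\mathbf{z};\mathbf{a})}(s_1,\ldots,s_r)$ is holomorphic at $\mathbf{a}$, with value $\ell_{[0,\ldots,0]}^{(a_1,\ldots,a_r)}(\mathbf{z})$. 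Thus the corollary will follow once I show that the meromorphic function on the right-hand side of \eqref{behaviour-gen-point} is actually holomorphic at $\mathbf{a}$ and agrees with $\mathrm{Li}^{\rm Reg}_{(\mathbf{z};\mathbf{a})}$ there, i.e.\ that the only term contributing a nonzero value is the $i=0$ term $\mathrm{Li}_{\bf z}(s_1,\ldots,s_r)$ and that this term itself is holomorphic at $\mathbf{a}$.

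The key step is to examine each summand $(-1)^i \mathrm{Li}_{(z_{i+1},\ldots,z_r)}(s_{i+1},\ldots,s_r)\,C_i(s_1,\ldots,s_i)$ for $1\le i\le r$ and show it vanishes identically near $\mathbf{a}$, or more precisely contributes nothing. I would argue as follows. The coefficient $C_i$ carries a factor $q_{[1,i]}$, so it is zero unless $z_{[1,i]}=1$. Assume $z_{[1,i]}=1$; then by \thmref{poles-malf} (case analysis on whether $z_{[1,j]}=1$) the factor $\mathrm{Li}_{(z_{i+1},\ldots,z_r)}(s_{i+1},\ldots,s_r)$ is holomorphic at $(a_{i+1},\ldots,a_r)$ precisely because the relevant partial-sum constraints are strict — here is where the hypothesis $\mathbf{a}\in V_r(\mathbf{z})$ enters, via $\Re(s_1+\cdots+s_i)>Q_i(\mathbf{z})$ for all $i$, which in integer terms reads $a_1+\cdots+a_i > Q_i(\mathbf{z}) = Q_{[1,i]}$. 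The summation in $C_i$ runs over $k_1,\ldots,k_i\ge 0$ with $\sum_{1\le t\le i}(k_t+a_t)=Q_{[1,i]}$; since $a_1+\cdots+a_i > Q_{[1,i]}$ and all $k_t\ge 0$, this index set is empty, so $C_i\equiv 0$ for every $i\ge 1$. Hence the right-hand side of \eqref{behaviour-gen-point} collapses to the single term $\mathrm{Li}_{\bf z}(s_1,\ldots,s_r)$.

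It then remains to show $\mathrm{Li}_{\bf z}(s_1,\ldots,s_r)$ is itself holomorphic at $\mathbf{a}$. This I would extract from \thmref{poles-malf}: a polar hyperplane $s_1+\cdots+s_{i_j}=n$ (with $n\le j$, in the notation there) passes through $\mathbf{a}$ only if $a_1+\cdots+a_{i_j}=n\le j$, where $i_j$ is one of the indices with $z_{[1,i_j]}=1$; but for such an index $i_j$ we have $Q_{[1,i_j]}\le j$ (it counts at most $j$ of the $i_j$ partial products), wait — more carefully, one uses that $Q_{i_j}(\mathbf{z})\ge 1$ and the defining inequality $a_1+\cdots+a_{i_j}>Q_{i_j}(\mathbf{z})\ge 1 \ge$ the relevant threshold forces $a_1+\cdots+a_{i_j}$ strictly above every candidate pole value, so no polar hyperplane through $\mathbf{a}$ exists. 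Combining, $\mathrm{Li}_{\bf z}$ is holomorphic at $\mathbf{a}$, equals $\mathrm{Li}^{\rm Reg}_{(\mathbf{z};\mathbf{a})}$ near $\mathbf{a}$, and its value there is $\ell_{[0,\ldots,0]}^{(a_1,\ldots,a_r)}(\mathbf{z})$.

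The main obstacle I anticipate is the bookkeeping in the last step: matching the exact list of candidate polar hyperplanes of $\mathrm{Li}_{\bf z}$ through $\mathbf{a}$ from \thmref{poles-malf} against the $V_r(\mathbf{z})$ inequalities $a_1+\cdots+a_i>Q_i(\mathbf{z})$, keeping careful track that $Q_i(\mathbf{z})$ (the count of $j$ with $z_{[j,i]}=1$) dominates each threshold $n$ appearing in the pole list. Once that combinatorial comparison is set up cleanly — essentially observing that for any index $i$ with $z_{[1,i]}=1$ one has $n\le Q_{[1,i]} = Q_i(\mathbf{z}) < a_1+\cdots+a_i$ — the rest is immediate. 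I would present the argument in the order: (1) emptiness of the $C_i$ index sets for $i\ge 1$, hence collapse of \eqref{behaviour-gen-point}; (2) holomorphicity of $\mathrm{Li}_{\bf z}$ at $\mathbf{a}$ via \thmref{poles-malf} and the $V_r(\mathbf{z})$ inequalities; (3) conclude the value is $\ell_{[0,\ldots,0]}^{(a_1,\ldots,a_r)}(\mathbf{z})$ using convergence of the power series \eqref{power-series} from \thmref{thm-gen-point}.
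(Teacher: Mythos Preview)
Your core argument is exactly the paper's: for $\mathbf{a}\in\Z^r\cap V_r(\mathbf{z})$ one has $a_1+\cdots+a_i>Q_i(\mathbf{z})=Q_{[1,i]}$ for every $i$, so the index set $\{k_1,\ldots,k_i\ge0:\sum(k_t+a_t)=Q_{[1,i]}\}$ is empty and $C_i\equiv0$ for all $i\ge1$, whence \eqref{behaviour-gen-point} collapses to $\mathrm{Li}^{\rm Reg}_{(\mathbf{z};\mathbf{a})}=\mathrm{Li}_{\bf z}$ as meromorphic functions near $\mathbf{a}$.

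Your step (2), however, is superfluous, and this is precisely the ``main obstacle'' you were worried about. You already observed at the outset that $\mathrm{Li}^{\rm Reg}_{(\mathbf{z};\mathbf{a})}$ is holomorphic at $\mathbf{a}$ (it is a convergent power series there, by \thmref{thm-gen-point}). An equality of germs of meromorphic functions, one of which is holomorphic, forces the other to be holomorphic too; so holomorphicity of $\mathrm{Li}_{\bf z}$ at $\mathbf{a}$ comes for free from step (1), with no appeal to \thmref{poles-malf} needed. The paper's proof is accordingly two lines: show $C_i=0$ for $i\ge1$, conclude $\mathrm{Li}_{\bf z}=\mathrm{Li}^{\rm Reg}_{(\mathbf{z};\mathbf{a})}$ near $\mathbf{a}$, and evaluate at $\mathbf{a}$. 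Drop step (2) and your proof is the paper's.
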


\begin{proof} Let $\mathbf{a}=(a_1,\ldots,a_r)$. Since $\mathbf{a}\in \Z^r\cap V_r(\mathbf{z})$, then for 
every $1\leq i \leq r$, we have $a_1+\cdots+a_i>Q_i({\bf z})=Q_{[1,i]}$. Hence, for every $1\leq i \leq r$,
there are no non-negative integers $k_1, \ldots, k_i$ such that $\sum_{1\leq t\leq i}(k_t+a_t)=Q_{[1,i]}$, i.e.,
for every $1\leq i \leq r$, $C_i(s_1,\ldots,s_i)=0$.
So by \eqref{behaviour-gen-point}, around the point $\mathbf{a}$, we have
$$
\mathrm{Li}_{\bf z}(s_{1},\ldots,s_r)=\mathrm{Li}^{\text{Reg}}_{(\mathbf{z};\mathbf{a})}(s_1,\ldots,s_r).
$$
 Hence
 $\mathrm{Li}_{\bf z}({\bf a})=\mathrm{Li}^{\text{Reg}}_{(\mathbf{z};\mathbf{a})}({\bf a})
 =\ell_{[0,\ldots,0]}^{(a_1,\ldots,a_r)}(\mathbf{z})$. 
\end{proof}

We recall that in \cite[Theorem 3]{PSBS}, we proved that for
$\mathbf{a}\in \Z^r\cap V_r(\mathbf{z})$, the series \eqref{mp}, treated as \eqref{lim-mp}, converges
at ${\bf a}$ to $\ell_{[0,\ldots,0]}^{(a_1,\ldots,a_r)}(\mathbf{z})$.
But such phenomenon can occur at certain points outside $V_r({\bf z})$ as well, even if the limit
in \eqref{lim-mp} does not exist. In view of \cite[Theorem 9]{BS1}, if $\mathbf{z}=(z_1,\ldots,z_r)\in \C^r$ be
such $z_i$'s are roots of unity and $z_{[1,i]}\neq 1$ for all $1\leq i\leq r$, then $\mathrm{Li}_{\bf z}(s_{1},\ldots,s_r)$
is holomorphic everywhere in $\C^r$. Moreover, we have the following corollary which is immediate from
\eqref{behaviour-gen-point}.

\begin{cor}
Let $\mathbf{z}=(z_1,\ldots,z_r)\in \C^r$ be such that $z_i$'s are roots of unity and $z_{[1,i]}\neq 1$ 
for all $1\leq i\leq r$. Then, for any $\mathbf{a}\in \Z^r$, the value of the holomorphic function
$\mathrm{Li}_{\bf z}(s_{1},\ldots,s_r)$ at $\mathbf{a}$ is equal to the regularised value
$\ell_{[0,\ldots,0]}^{(a_1,\ldots,a_r)}(\mathbf{z})$ of the series \eqref{mp} at $\mathbf{a}$.
\end{cor}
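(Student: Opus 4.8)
The plan is to deduce the corollary directly from \thmref{thm-gen-point} and \thmref{poles-malf}, following the same pattern as the proof of \corref{cor-vrz}. First I would record that, since $z_{[1,i]}\neq 1$ for all $1\leq i\leq r$, \thmref{poles-malf} guarantees that $\mathrm{Li}_{\bf z}(s_1,\ldots,s_r)$ is holomorphic on the whole of $\C^r$; in particular it is holomorphic at $\mathbf{a}$, so its value $\mathrm{Li}_{\bf z}(\mathbf{a})$ is well defined and equals the value at $\mathbf{a}$ of its Taylor expansion around $\mathbf{a}$.

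Next I would examine the right-hand side of \eqref{behaviour-gen-point}. By hypothesis $z_{[1,i]}\neq 1$ for every $1\leq i\leq r$, hence $q_{[1,i]}=0$ for every such $i$. Since each $C_i(s_1,\ldots,s_i)$ with $1\leq i\leq r$ carries an explicit factor $q_{[1,i]}$ in front of its defining sum, this forces $C_i(s_1,\ldots,s_i)\equiv 0$ for all $1\leq i\leq r$. Therefore only the term $i=0$ survives in \eqref{behaviour-gen-point}, and because $C_0(\varnothing)=1$ and $\mathrm{Li}_{(z_1,\ldots,z_r)}(s_1,\ldots,s_r)$ is exactly $\mathrm{Li}_{\bf z}(s_1,\ldots,s_r)$, the identity \eqref{behaviour-gen-point} collapses to the equality of meromorphic (indeed holomorphic) functions $\mathrm{Li}^{{\rm Reg}}_{(\mathbf{z};\mathbf{a})}(s_1,\ldots,s_r)=\mathrm{Li}_{\bf z}(s_1,\ldots,s_r)$. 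Finally, \thmref{thm-gen-point} tells us that the power series \eqref{power-series} converges in a neighbourhood of $\mathbf{a}$ to $\mathrm{Li}^{{\rm Reg}}_{(\mathbf{z};\mathbf{a})}$; evaluating that power series at $s=\mathbf{a}$ leaves only the constant term $\ell_{[0,\ldots,0]}^{(a_1,\ldots,a_r)}(\mathbf{z})$. Combining the last two observations yields $\mathrm{Li}_{\bf z}(\mathbf{a})=\mathrm{Li}^{{\rm Reg}}_{(\mathbf{z};\mathbf{a})}(\mathbf{a})=\ell_{[0,\ldots,0]}^{(a_1,\ldots,a_r)}(\mathbf{z})$, which is the assertion.

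There is essentially no obstacle here: all the analytic content has already been absorbed into \thmref{thm-gen-point}. The only point requiring care is the bookkeeping, namely confirming that the hypothesis ``$z_{[1,i]}\neq 1$ for all $1\leq i\leq r$'' is precisely what makes every $q_{[1,i]}$ with $1\leq i\leq r$ vanish, hence every $C_i$ with $i\geq 1$ vanish identically, so that the corollary drops out of \eqref{behaviour-gen-point} exactly as \corref{cor-vrz} did.
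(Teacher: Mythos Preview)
Your proposal is correct and matches the paper's approach exactly: the paper states that this corollary is ``immediate from \eqref{behaviour-gen-point}'', and your argument---observing that $q_{[1,i]}=0$ kills every $C_i$ with $i\ge 1$, so that \eqref{behaviour-gen-point} reduces to $\mathrm{Li}^{{\rm Reg}}_{(\mathbf{z};\mathbf{a})}=\mathrm{Li}_{\bf z}$ and evaluation at $\mathbf{a}$ gives the result---is precisely that immediate deduction, carried out in the same spirit as the proof of \corref{cor-vrz}.
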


\begin{rmk}\rm
We also note that we can recover \eqref{behaviour-boundary} from \eqref{behaviour-gen-point}.
Let $(a_1,\ldots,a_r)\in \Z^r\cap \overline{U_r(\mathbf{z})}$. If $1\leq i\leq r$ is such that
$i \notin \mathcal{I}({\bf z},{\bf a})$, then $i$ does not satisfy one of the defining conditions of
$\mathcal{I}({\bf z},{\bf a})$. If $z_{[1,i]} \ne 1$, then $q_{[1,i]} =0$. Next if there exist
$1 \le i_1 < i_2 \le i$ such that $z_{[i_1,i]},z_{[i_2,i]} \ne 1$, then
$Q_{[1,i]}=q_{[1,i]}+\cdots+q_{[i,i]} \le i-2$. But since $a_1+\cdots+a_i \ge i-1$,
there are no non-negative integers $k_1, \ldots, k_i$ such that $\sum_{1\leq t\leq i}(k_t+a_t)=Q_{[1,i]}$.
Next let $a_1+\cdots+a_i > i-a({\bf z})_i$. Note that $a({\bf z})_i=0$ if $z_1=\cdots=z_i=1$, else
$a({\bf z})_i=1$. Also, $Q_{[1,i]}=i$ if $z_1=\cdots=z_i=1$, else $Q_{[1,i]} \le i-1$.
In any case, $Q_{[1,i]} < a_1+\cdots+a_i$ and hence
there are no non-negative integers $k_1, \ldots, k_i$ such that $\sum_{1\leq t\leq i}(k_t+a_t)=Q_{[1,i]}$.
So if $i \notin \mathcal{I}({\bf z},{\bf a})$, then $C_i(s_1,\ldots,s_i)=0$. 

Next let $i \in \mathcal{I}({\bf z},{\bf a})$ and $i \ge 1$. Note that similar computations as above
shows that the admissible choice of $k_1, \ldots, k_i$ in this case is $k_1= \cdots= k_i=0$.
Now if $t_i=0$, i.e., $z_1=\cdots=z_i=1$,
we have $J_{i}^{'}=\varnothing$ and $Q_{[j,i]}=i-j+1$ for $1 \le j \le i$. Hence
$$
(s_i+\cdots+ s_{j}-Q_{[j+1,i]})_{-q_{[j,i]}}
=\frac{1}{s_i+\cdots+ s_{j}-(i-j+1)}.
$$
So we are done in this case. Next let $t_i \ge 1$. In this case, we note that
$J_{i}^{'}=\{t_i\}$, $q_{[t_i,i]}=0$ and $q_{[j,i]}=1$ for all $1 \le j \le i$ such that $j \neq t_i$.
Thus, $Q_{[j,i]}=i-j+1$ for $t_i< j \le i$ and $Q_{[j,i]}=i-j$ for $1 \le j \le t_i$.
Now, as before, we note that for $t_i< j \le i$,
$$
(s_i+\cdots+ s_{j}-Q_{[j+1,i]})_{-q_{[j,i]}}
=\frac{1}{s_i+\cdots+ s_{j}-(i-j+1)},
$$
for $1 \le j < t_i$,
$$
(s_i+\cdots+ s_{j}-Q_{[j+1,i]})_{-q_{[j,i]}}
=\frac{1}{s_i+\cdots+ s_{j}-(i-j)},
$$
and for $j=t_i$,
$$
(s_i+\cdots+ s_{j}-Q_{[j+1,i]})_{-q_{[j,i]}}=1.
$$
Therefore, we see that  \eqref{behaviour-boundary} can be recovered from \eqref{behaviour-gen-point}.
\end{rmk}

\begin{rmk}\rm
Let $\mathcal{I}({\bf z})$ denote the set $\{0\} \cup \{ i : 1 \le i \le r \text{ and } q_{[1,i]}=1\}$.
We can write \eqref{behaviour-gen-point} for all the points of the form $(a_{i+1},\ldots,a_r)$ for $i\in\mathcal{I}({\bf z})$.
As in \S\ref{behavior-at-boundary}, all these equations can be concisely written
in the following matrix form:
$$
{\bf V}^{\text{Reg}}={\bf N}{\bf V},
$$
where ${\bf V}^{\text{Reg}}$ and ${\bf V}$ are column vectors indexed by the elements of 
$\mathcal{I}({\bf z})$, such that for each $i\in \mathcal{I}({\bf z})$, 
the corresponding entries of ${\bf V}^{\text{Reg}}$ and ${\bf V}$ are 
$\mathrm{Li}^{\text{Reg}}_{(z_{i+1},\ldots,z_r;a_{i+1},\ldots,a_r)}(s_{i+1},\ldots,s_r)$
and $\mathrm{Li}_{(z_{i+1},\ldots,z_r)}(s_{i+1},\ldots,s_r)$, respectively,
and ${\bf N}$ is an upper triangular matrix with entries in the field of rational functions
$\C(s_1,\ldots,s_r)$, and the diagonal entries as $1$. Hence, ${\bf N}$ is an invertible matrix with entries in
$\C(s_1,\ldots,s_r)$ and we can write
$$
{\bf V}={\bf N}^{-1}{\bf V}^{\text{Reg}}.
$$
Now the first entry of the above matrix equation gives an expression of the form
$$
\mathrm{Li}_{\bf z}(s_{1},\ldots,s_r)=\sum_{i \in \mathcal{I}({\bf z})} D_i(s_1,\ldots,s_i)\mathrm{Li}^{\text{Reg}}_{(z_{i+1},\ldots,z_r;a_{i+1},\ldots,a_r)}(s_{i+1},\ldots,s_r),
$$
where $D_0(\varnothing):=1$ and for $i \in \mathcal{I}({\bf z})$ with $i \ge 1$, $D_i(s_1,\ldots,s_i)$
is a rational function in $s_1,\ldots,s_i$. This equation, therefore, gives
the desired Laurent type expansion of $\mathrm{Li}_{\bf z}(s_{1},\ldots,s_r)$ around ${\bf a}$.
\end{rmk}

\begin{ex}\rm
Let ${\bf z}=(-1,1,-1,1)$ and ${\bf a}=(0,1,0,1)$. Then $\mathcal{I}({\bf z})=\{0,3,4\}$. From
\eqref{behaviour-gen-point}, we have
$$
\begin{bmatrix}
 \mathrm{Li}^{{\rm Reg}}_{(\mathbf{z};\mathbf{a})}(s_1,s_2,s_3,s_4)\\
 \mathrm{Li}^{\text{Reg}}_{(1;1)}(s_4)\\
1
\end{bmatrix}
= 
\begin{bmatrix}
1  &-\frac{1}{4(s_1+s_2+s_3-1)} &\frac{1}{4(s_4-1)(s_1+s_2+s_3+s_4-2)} \\ 
     0                         &1             &-\frac{1}{s_4-1}\\
         0                            &0                   &1
\end{bmatrix}
 \begin{bmatrix}
 \mathrm{Li}_{\bf z}(s_1,s_2,s_3,s_4)\\
 \mathrm{Li}_{(1)}(s_4)\\
1
\end{bmatrix}.
$$
Therefore,
$$ 
 \begin{bmatrix}
 \mathrm{Li}_{\bf z}(s_1,s_2,s_3,s_4)\\
 \mathrm{Li}_{(1)}(s_4)\\
1
\end{bmatrix}
=
\begin{bmatrix}
1  &\frac{1}{4(s_1+s_2+s_3-1)} &\frac{1}{4(s_1+s_2+s_3-1)(s_1+s_2+s_3+s_4-2)} \\ 
     0                         &1             &\frac{1}{s_4-1}\\
         0                            &0                   &1
\end{bmatrix}
\begin{bmatrix}
 \mathrm{Li}^{\text{Reg}}_{({\bf z}; {\bf a})}(s_1,s_2,s_3,s_4)\\
 \mathrm{Li}^{\text{Reg}}_{(1;1)}(s_4)\\
1
\end{bmatrix},
$$
and hence we get the following Laurent type expansion around the point ${\bf a}=(0,1,0,1)$:
\begin{align*}
\mathrm{Li}_{\bf z}(s_1,s_2,s_3,s_4)
= & \mathrm{Li}^{\text{Reg}}_{({\bf z}; {\bf a})}(s_1,s_2,s_3,s_4)
+\frac{ \mathrm{Li}^{\text{Reg}}_{(1;1)}(s_4)}{4(s_1+s_2+s_3-1)}\\
&+\frac{1}{4(s_1+s_2+s_3-1)(s_1+s_2+s_3+s_4-2)}.
\end{align*}
\end{ex}

\begin{proof}[Proof of \thmref{thm-gen-point}]
We first recall the combinatorial identity \eqref{combi-indenty},
\begin{align*}
\mathrm{Li}_{\bf z}(s_1,\ldots,s_r)_{< N}=\sum_{i=0}^{r}(-1)^i\mathrm{Li}^{\star}_{(z_i,\ldots,z_1)}(s_i,\ldots,s_1)_{\geq N}\mathrm{Li}_{(z_{i+1},\ldots,z_r)}(s_{i+1},\ldots,s_r).
\end{align*}
Note that $(\mathrm{Li}_{\bf z}(s_1,\ldots,s_r)_{< N})_{N \geq 2}$ is a sequence of holomorphic functions around 
${\bf a}$ and its sequence of the $\mathbf{k}=(k_1,\ldots,k_r)$-th Taylor coefficients at ${\bf a}$ is given by 
\begin{align}\label{taylor-coeffi}
\left(\frac{(-1)^{k_1+\cdots+k_r}}{k_1!\cdots k_r!}\sum_{N>n_1>\cdots>n_r>0}\frac{z_1^{n_1}(\log n_1)^{k_1}\cdots z_r^{n_r}(\log n_r)^{k_r}}{n_1^{a_1}\cdots n_r^{a_r}}\right)_{N\geq 2}.
\end{align}
Now using \propref{asymp-mp*}, \rmkref{change-variable} and combinatorial identity \eqref{combi-indenty}, we get that the sequence (of 
germs at ${\bf a}$) of holomorphic functions ($\mathrm{Li}_{\bf z}(s_1,\ldots,s_r)_{< N})_{N\geq 2}$  has a complete 
asymptotic expansion (relative to $\mathcal{E}$ and coefficient in $\mathcal{C}_{\bf z}$). Therefore, if the 
associated formal complete asymptotic expansion is given by $G=\sum_{(l,m)\in \N\times\Z}g_{(l,m)}L^lX^m$, then by 
definition for each $(l,m)\in \N\times\Z$, $g_{(l,m)}$ is the germ of a holomorphic function at ${\bf a}$. This implies that the 
sequence in \eqref{taylor-coeffi} has an asymptotic expansion to the arbitrary precision (relative to $\mathcal{E}$ and coefficient in 
$\mathcal{C}_{\bf z}$). In particular, by definition and the above argument, we get, for each $\mathbf{k}=(k_1,\ldots,k_r)$,
 $$
 c_{\mathbf{k}}(g_{(0,0)})=\frac{(-1)^{k_1+\cdots+k_r}}{k_1!\cdots k_r!}\ell_{[k_1,\ldots,k_r]}^{(a_1,\ldots,a_r)}(\mathbf{z}).
 $$ 
 Since $g_{(0,0)}$ is a germ of holomorphic function at ${\bf a}$, therefore the power series \eqref{Reg-Li}
 converges in a neighbourhood of ${\bf a}$.
 
 Now to get the required equality between the meromorphic functions, we apply \propref{asymp-mp*}
 and \rmkref{change-variable} in \eqref{combi-indenty}, and deduce that the formal Laurent series $G$ is
\begin{align*}
\sum_{i=0}^{r}\sum_{k_1,\ldots,k_r\geq 0}\sum_{l\geq 0}h_{(i,l,{\bf k})}(\mathbf{z})L^{l}X^{a_1+\cdots+a_i+k_1+\cdots+k_i-Q_{[1,i]}},
 \end{align*}
where for each $0\leq i\leq r$, $h_{(i,l,{\bf k})}(\mathbf{z})$ is the germ of the function at $\mathbf{a}$ which maps $(s_1,\ldots,s_r)$ to
\begin{align*}
& \ \frac{q_{[1,i]}(-1)^{i+l}}{l!k_1!\cdots k_i!}
\left(\prod_{j\in J_{i}^{'}}\frac{\overline{z_{[j,i]}}A_{k_j}^{\star}(\overline{z_{[j,i]}})}{(\overline{z_{[j,i]}}-1)^{k_j+1}}
\prod_{j\in J_{i}}B_{k_j}^{\star}\right)
\mathrm{Li}_{(z_{i+1},\ldots,z_r)}(s_{i+1},\ldots,s_r)(s_i)_{k_i-q_{[i,i]}}\\
&\times(s_i+s_{i-1}+k_i-Q_{[i,i]})_{k_{i-1}-q_{[i-1,i]}}\cdots(s_i+\cdots+ s_{1}+k_i+\cdots+k_2-Q_{[2,i]})_{k_{1}-q_{[1,i]}}
(|{\bf s}|-|{\bf a}|)^l.
\end{align*}
Since the power series \eqref{Reg-Li} is same as $g_{(0,0)}$,  taking $l=0$ and
$\sum_{1\leq t\leq i}(k_t+a_t)=Q_{[1,i]}$ for each $1\leq i\leq r$, we get that $g_{(0,0)}$ is the germ of function for which
\begin{align*}
(s_1,\ldots,s_r)\mapsto & \ \sum_{i=0}^{r}(-1)^{i}\mathrm{Li}_{(z_{i+1},\ldots,z_r)}(s_{i+1},\ldots,s_r) C_i(s_1,\ldots,s_i),
\end{align*}
where $C_i(s_1,\ldots,s_i)$'s are the desired rational functions. This completes the proof of the identity
\eqref{behaviour-gen-point} and the meromorphic extension of the power series \eqref{Reg-Li}.
\end{proof}

{\bf Statements and Declarations:}
The research of the first author is supported by PMRF (grant number: 1402688, cycle 10).
The authors have no competing interests to declare that are relevant to the content of this article.
This manuscript has no associated data.


\end{document}